\newif\ifarxiv
\setlist[itemize,1]{label={$\bullet$}}
\newcommand{\defn}[1]{{\textbf{#1}}}
\newcommand{\hspec}{h_\mathrm{spec}^\perp}
\newcommand{\NN}{\mathbb{N}}
\newcommand{\ZZ}{\mathbb{Z}}
\newcommand{\JJ}{\mathbb{J}}
\newcommand{\RR}{\mathbb{R}}
\newcommand{\AAA}{\mathcal{A}}
\newcommand{\BBB}{\mathcal{B}}
\newcommand{\CCC}{\mathcal{C}}
\newcommand{\LLL}{\mathcal{L}}
\newcommand{\EEE}{\mathcal{E}}
\newcommand{\RRR}{\mathcal{R}}
\newcommand{\DDD}{\mathcal{D}}
\newcommand{\FFF}{\mathcal{F}}
\newcommand{\HHH}{\mathcal{H}}
\newcommand{\SSS}{\mathcal{S}}
\newcommand{\MMM}{\mathcal{M}}
\newcommand{\PPP}{\mathcal{P}}
\newcommand{\YYY}{\mathcal{Y}}
\newcommand{\ZZZ}{\mathcal{Z}}
\newcommand{\XXX}{\mathcal{X}}
\newcommand{\tC}{\tilde{\mathcal{C}}}
\newcommand{\GGG}{\mathcal{G}}
\newcommand{\ulim}{\varlimsup}
\newcommand{\eps}{\varepsilon}
\newcommand{\ph}{\varphi}
\newcommand{\abs}[1]{\left\lvert#1\right\rvert}
\newcommand{\zz}{\mathbf{z}}
\newcommand{\yy}{\mathbf{y}}
\renewcommand{\aa}
{a}
\newcommand{\Gs}{G^-_{\mathrm{s}}}
\newcommand{\GsM}{G^M_{\mathrm{s}}}
\newcommand{\HsM}{\HHH^M_{\mathrm{s}}}
\newcommand{\hC}{\hat{\CCC}}
\newcommand{\absd}[1]{\abs{#1}_\mathrm{d}}
\newcommand{\Cp}{\CCC^\mathrm{p}}
\newcommand{\Cs}{\CCC^\mathrm{s}}
\newcommand{\Cps}{\CCC^\mathrm{p,s}}
\newcommand{\Ep}{\EEE^\mathrm{p}}
\newcommand{\Es}{\EEE^\mathrm{s}}
\newcommand{\Eps}{\EEE^\mathrm{p,s}}
\newcommand{\Ch}{C_{\mathrm{h}}}
\newcommand{\Cb}{C^\beta}
\newcommand{\MT}{\MMM_T}
\newcommand{\Ms}{\MMM_\sigma}
\newtheorem{theorem}{Theorem}[section]
\newtheorem{lemma}[theorem]{Lemma}
\newtheorem{proposition}[theorem]{Proposition}
\newtheorem{corollary}[theorem]{Corollary}
\numberwithin{figure}{section}
\numberwithin{equation}{section}
\theoremstyle{remark}
\newtheorem{remark}[theorem]{Remark}
\newtheorem{example}[theorem]{Example}
\newtheorem{question}[theorem]{Question}
\newtheorem{definition}[theorem]{Definition}
\DeclareMathOperator{\Per}{Per}
\DeclareMathOperator{\Cor}{Cor}
\DeclareMathOperator{\End}{End}
\thanks{The author is partially supported by NSF grants DMS-1362838 and DMS-1554794.}
\subjclass[2010]{37B10, 37D35 (primary), 37D25 (secondary)}
\begin{document}


\title{Specification and towers in shift spaces}
\iftoggle{arxiv}{
\author{Vaughn Climenhaga}
\address{Department of Mathematics, University of Houston, Houston, Texas 77204}
\email{climenha@math.uh.edu}
\date{\today}
}{
\author{Vaughn Climenhaga}
\institute{Department of Mathematics, University of Houston, \email{climenha@math.uh.edu}}
\maketitle
}

\begin{abstract}
We show that a shift space on a finite alphabet with a non-uniform specification property can be modeled by a strongly positive recurrent countable-state Markov shift to which every equilibrium state lifts.  In addition to uniqueness of the equilibrium state, this gives strong statistical properties including the Bernoulli property, exponential decay of correlations, central limit theorem, and analyticity of pressure, which are new even for uniform specification.  We give applications to shifts of quasi-finite type, synchronised and coded shifts, and factors of $\beta$-shifts and $S$-gap shifts.
\end{abstract}

\iftoggle{arxiv}{\maketitle}{}

\setcounter{tocdepth}{1}


\section{Introduction}
\label{sec:introduction}

\subsection{Main results}\label{sec:result}

Let $f\colon X\to X$ be a continuous map on a compact metric space, and let $\ph\colon X\to \RR$ be continuous.  The \defn{topological pressure}  is $P(\ph) := \sup \{h(\mu) + \int \ph\,d\mu\}$, where $h(\mu)$ is Kolmogorov--Sinai entropy and the supremum is taken over 
$f$-invariant Borel probability measures.  A measure achieving the supremum is called an \defn{equilibrium state} for $(X,f,\ph)$.
Existence, uniqueness, and statistical properties of equilibrium states 
have consequences for many areas of dynamics and geometry, such as
distribution of closed geodesics 
\cite{rB72,gK98};
physical measures for smooth maps \cite{dR76}; 
entropy rigidity for geodesic flow \cite{aK82};
large deviations \cite{yK90};
multifractal analysis \cite{BSS02};
the Weil--Petersson metric \cite{cM08};
Teichm\"uller flow \cite{BG11};
phase transitions and quasicrystals \cite{BL13}; 
representation theory \cite{BCLS};
and diffusion along periodic surfaces \cite{AHS}.

Here we study symbolic dynamics.  Given a finite set $A$ (the \defn{alphabet}), the set $A^\ZZ$ of bi-infinite sequences is compact in the metric $d(x,y) = e^{-\min \{\abs{k} : x_k\neq y_k\}}$, and the \defn{shift map} $\sigma\colon A^\ZZ\to A^\ZZ$ defined by $(\sigma x)_k = x_{k+1}$ is continuous.  A \defn{two-sided shift space} on $A$ is a closed set $X\subset A^\ZZ$ with $\sigma(X) = X$.  
Replacing $A^\ZZ$ with $A^{\NN\cup \{0\}}$ gives a one-sided shift space; we will work with both one- and two-sided shifts.  A \defn{word} is a finite sequence of symbols $w\in A^* := \bigcup_{n\geq 0} A^n$; we write $\abs{w}$ for the length of $w$.  Given $x\in X$ and integers $i\leq j$, let $x_{[i,j]} := x_i x_{i+1} \cdots x_j$.  The set of words obtained this way is the \defn{language} $\LLL = \LLL(X) = \bigcup_{n\geq 0} \LLL_n$, where $\LLL_n = \{x_{[1,n]} : x\in X \} \subset A^n$ for $n>0$, and $\LLL_0$ is the set containing the empty word.

A shift space $X$ is \defn{topologically transitive} if for every $v,w\in \LLL$ there is $u\in \LLL$ such that $vuw\in \LLL$, and is a \defn{subshift of finite type (SFT)} if there is a finite set $F\subset A^*$ such that $X = \{x\in A^\ZZ : x_{[i,j]} \notin F$ for every $i<j\}$. When $X$ is a topologically transitive SFT, every H\"older continuous potential has a unique equilibrium state $\mu$, and $\mu$ has strong statistical properties (Bernoulli property and exponential decay of correlations up to a period, central limit theorem) \cite{rB75}; moreover, the topological pressure function is analytic \cite{PP90}.  

A weaker criterion for uniqueness is the \defn{specification} condition \cite{rB74},  which does not require the SFT condition but strengthens transitivity by requiring that there is $\tau\in \NN$ such that for every $v,w\in \LLL$ there is $u\in \LLL$ with $\abs{u} \leq \tau$ such that $vuw\in \LLL$.\footnote{There are many versions of specification in the literature; this definition is specialised for the symbolic setting and is slightly weaker than Bowen's original one.}
Uniqueness results using non-uniform specification conditions have been proved by the author and D.J.\ Thompson \cite{CT1,CT2}.  An important tool in these results, and here, is the notion of the \defn{pressure} of a collection of words $\DDD \subset \LLL$:
\begin{equation}\label{eqn:PD}
P(\DDD,\ph) := \ulim_{n\to\infty} \frac 1n \log \sum_{w\in \DDD \cap \LLL_n}  \sup_{\{x\in X : x_{[1,n]} = w\}}
e^{\ph(x) + \ph(\sigma x) + \cdots + \ph(\sigma^{n-1} x)}.
\end{equation}
The variational principle \cite[Theorem 9.10]{pW82} gives $P(\LLL,\ph) = P(\ph)$.  

Uniqueness is enough to show that the topological pressure function is G\^{a}teaux differentiable -- there are no first-order phase transitions -- but does not immediately imply analyticity, which would rule out phase transitions of all orders.  Similarly, the stronger statistical properties of $\mu$
do not appear to have been studied using any version of specification.  They are known to hold for systems on which a certain `tower' can be built \cite{lY98,lY99}.  Our main result uses a non-uniform specification condition to establish uniqueness, statistical properties, and analyticity by building a tower.  To formulate the condition, we need the following notion: a word $w'\in \LLL_k$ is a \defn{prefix} of a word $w\in \LLL_n$ if $k\leq n$ and $w'=w_{[1,k]}$.  Similarly, if $w'=w_{[j,\abs{w}]}$ for some $1\leq j\leq\abs{w}$ then $w'$ is a \defn{suffix} of $w$.
See \S\ref{sec:statistical-properties} for full definitions of the various conclusions.

\begin{theorem}\label{thm:main}
Let $X$ be a one- or two-sided shift space on a finite alphabet with language $\LLL$ and let $\ph\colon X\to \RR$ be H\"older continuous. 
Suppose there is $\GGG \subset \LLL$ such that
\begin{enumerate}[leftmargin=*, widest=III, 
label = \textup{\textbf{[\Roman{*}{]}}}]
\item\label{spec}
there is $\tau\in \NN$  such that
for all $v,w\in \GGG$, there is $u\in \LLL$ with $\abs{u}\leq\tau$ such that $v'uw'\in \GGG$ whenever $v'\in \GGG$ is a suffix of $v$ and $w'\in \GGG$ is a prefix of $w$;
\item\label{gap}
there are $\Cp,\Cs\subset \LLL$ such that $P(\Cp \cup \Cs \cup (\LLL \setminus \Cp \GGG \Cs),\ph) < P(\ph)$;
\item\label{stay-good}
there is $L\in \NN$ such that if $u,v,w\in \LLL$ have $\abs{v}\geq L$, $uvw\in \LLL$, and $uv,vw\in \GGG$, then $v, uvw\in \GGG$.
\end{enumerate}
Then the following are true.
\begin{enumerate}[leftmargin=*, widest=iii,
label=\textup{(\roman{*})}]
\item\label{unique} $(X,\ph)$ has a unique equilibrium state $\mu$.
\item\label{gibbs} $\mu$ has the Gibbs property \eqref{eqn:gibbs} for $\ph$ on $\GGG$.
\item\label{periodic} $\mu$ is the limiting distribution of $\ph$-weighted periodic orbits.
\item\label{bernoulli} 
$(X,\sigma,\mu)$ has exponential decay of correlations for H\"older observables up to a period.
\item\label{clt} $(X,\sigma,\mu)$ satisfies the central limit theorem for H\"older observables $\psi$, with variance $0$ if and only if $\psi$ is cohomologous to a constant.
\item\label{analytic} Given any H\"older continuous $\psi\colon X\to \RR$, there is $\eps>0$ such that the topological pressure function $t\mapsto P(\ph + t\psi)$ is real analytic on $(-\eps,\eps)$.
\end{enumerate}
When $X$ is two-sided, $(X,\sigma,\mu)$ has the Bernoulli property up to a  period.
\end{theorem}

When $\GGG=\LLL$, \ref{spec} is the classical specification property, \ref{gap} holds with $\Cp=\Cs=\emptyset$, and \ref{stay-good} is immediate.  Conclusions \ref{unique}--\ref{periodic} are well-known in this case \cite{rB74}, but \ref{bernoulli}--\ref{analytic} are new.
When $\GGG\neq \LLL$,  \ref{spec}--\ref{stay-good} should be interpreted as 
\defn{non-uniform specification} conditions.\footnote{The term ``non-uniform specification'' is to be understood informally; it is not clear whether these conditions imply the ones in \cite{CT1,CT2}, or vice versa.}
The idea is that $\GGG$ is a collection of \defn{`good'} words for which specification holds, so $\Cp$, $\Cs$, and $\LLL\setminus\Cp\GGG\Cs$ contain all \defn{`obstructions to specification'}: every $w\in \Cp\GGG\Cs$ admits a decomposition $w=u^p v u^s$ with $v\in \GGG$ and $u^{p,s} \in \CCC^{\mathrm{p,s}}$, so $w$ can be made good by removing the prefix $u^p$ and the suffix $u^s$.  Condition \ref{gap} requires that the obstructions have small pressure and hence are `invisible' to equilibrium states.

Theorem \ref{thm:main} is an immediate corollary of the following two results.  The first of these is a structure theorem relating $X$ to a countable-state Markov shift $\Sigma$; see \S\ref{sec:shift-spaces} for complete definitions.  The second uses the fact that \defn{strong positive recurrence} implies existence of a unique \defn{Ruelle--Perron--Frobenius} (RPF) measure $m$ on $\Sigma$ for the potential function $\ph\circ \pi\colon \Sigma\to \RR$, which has strong statistical properties \cite{CS09}.


\begin{theorem}\label{thm:structure}
Let $X$ be a one- or two-sided shift space on a finite alphabet and let $\ph\colon X\to \RR$ be H\"older.  If the language $\LLL$ of $X$ has a subset $\GGG$ satisfying the non-uniform specification conditions \ref{spec}--\ref{stay-good}, then there exists an equilibrium state $\mu$ for $(X,\ph)$ that satisfies the Gibbs property \eqref{eqn:Gibbs} for $\ph$ on $\GGG$.\footnote{This is in fact the unique equilibrium state, but the proof of uniqueness waits until Theorem \ref{thm:spr-a}.}  Moreover, there is a topologically transitive
countable-state Markov shift $\Sigma$ and a 1-block code $\pi\colon \Sigma\to X$ such that
\begin{enumerate}[leftmargin=*, widest=a, 
label=\textup{(\alph{*})}]
\item\label{spr} $\ph\circ \pi$ is strongly positive recurrent on $\Sigma$; and 
\item\label{lifts} 
there is $P' < P(\ph)$ such that for every ergodic Borel probability measure $\mu$ on $X$ with $h(\mu) + \int \ph\,d\mu > P'$, there is a shift-invariant Borel probability measure $\nu$ on $\Sigma$ such that $\mu = \pi_*\nu$ and $h(\mu) = h(\nu)$.
\end{enumerate}
In the case when $X$ is a two-sided shift space, the map $\pi$ is injective.\footnote{For $\ph=0$, this and \ref{lifts} imply that $X$ and $\Sigma$ are \defn{$h$-isomorphic} in the sense of \cite{jB97}.}
In both the one- and two-sided cases, the following weaker property holds:
\begin{equation}\label{eqn:inj}
\text{if $\zz,\zz'\in \Sigma$ have $\pi(\zz)=\pi(\zz')$, then 
$\zz_{[n,\infty)} = \zz'_{[n,\infty)}$ for some $n\in \NN$.
}
\end{equation}
\end{theorem}

Injectivity generally fails in the one-sided case; see
Example \ref{eg:1-sided-inj}.

\begin{theorem}\label{thm:spr-a}
Let $X$ be a one- or two-sided shift space on a finite alphabet and let $\ph\colon X\to \RR$ be H\"older.
If there is a topologically transitive
countable-state Markov shift $\Sigma$ and a 1-block code $\pi\colon \Sigma\to X$ satisfying \ref{spr}, \ref{lifts}, and \eqref{eqn:inj}, then $(X,\ph)$ has a unique equilibrium state $\mu$, which satisfies conclusions \ref{periodic}--\ref{analytic} of Theorem \ref{thm:main}, and the period $d$ in \ref{bernoulli} is given by the gcd of the lengths of periodic orbits in $\Sigma$.  
In the case when $X$ is two-sided, $(X,\sigma,\mu)$ is Bernoulli up to the period $d$.
\end{theorem}


See \S\ref{sec:subsidiary-results} for a road map of the proofs.
Theorem \ref{thm:spr-a} is a relatively routine consequence of well-known results about thermodynamic formalism for countable-state Markov shifts; see Theorem \ref{thm:spr}.  Most of the work in this paper is devoted to proving Theorem \ref{thm:structure}, whose starting point is an argument of Bertrand  for constructing a synchronising word using specification \cite{aB88}.  Although \ref{spec}--\ref{stay-good} do not imply that $X$ is synchronised, we can still `synchronise good words', produce a collection $\FFF \subset \LLL$ of words that can be \defn{`freely concatenated'} (Theorem \ref{thm:get0spec}), and use $\FFF$ to describe $\Sigma$ (Theorem \ref{thm:tower}).

\subsubsection*{Structure of the paper}
In \S\S\ref{sec:motivating}--\ref{sec:hyperbolic} we describe: motivating questions behind Theorem \ref{thm:main}; a method for verifying \ref{spec}--\ref{stay-good} that passes to factors; and applications to a number of classes of systems, including shifts of quasi-finite type, synchronised shifts, and coded shifts.
Further definitions and background are given in \S\ref{sec:defs}.  In \S\ref{sec:subsidiary-results} we formulate some intermediate results that imply Theorems \ref{thm:main}--\ref{thm:spr-a}.
In \S\ref{sec:prep} we give preparatory results for the proofs, including  mild strengthenings of the Birkhoff and Shannon--McMillan--Breiman theorems that hold quite generally, not just in the setting of this paper.
The results from \S\ref{sec:subsidiary-results} are proved in  \S\S\ref{sec:towers2}--\ref{sec:get0spec}. 
In \S\ref{sec:other-proofs} we prove the remaining results stated in the introduction, including the applications.
We will have occasion to use various conditions that are related to \ref{spec}, \ref{gap}, and \ref{stay-good}.  For the reader's convenience, we gather these in Appendix \ref{sec:conditions}.




\subsection{Motivating questions}\label{sec:motivating}

The  non-uniform specification property introduced in  \cite{CT1} was motivated by $\beta$-shifts and their factors.  Given $\beta>1$ and $A = \{0,\dots, \lceil\beta\rceil - 1\}$, the greedy $\beta$-expansion of $1$ is the lexicographically maximal $\zz\in A^\NN$ satisfying $1 = \sum_{k=1}^\infty \zz_k \beta^{-k}$, and the $\beta$-shift $\Sigma_\beta$ is the subshift of $A^\NN$ defined by the condition that $x\in \Sigma_\beta$ if and only if $x_{[k,\infty)} \preceq \zz$ for all $k\in \NN$, where $\preceq$ is the lexicographic order.   The $\beta$-shift is the natural coding space for the map $x\mapsto \beta x \pmod 1$ on the unit interval.

For Lebesgue-a.e.\ $\beta>1$, the $\beta$-shift does not have specification \cite{jS97}; however, it can be described in terms of a countable-state Markov shift, which was used by  Hofbauer to prove uniqueness of the equilibrium state for $\ph=0$, called the measure of maximal entropy (MME) \cite{fH78}.  The corresponding result for subshift factors of $\beta$-shifts remained open for some time \cite[Problem 28.1]{mB08}, which led the author and D.J.\ Thompson to introduce a set of non-uniform specification conditions that are satisfied by the $\beta$-shifts, that pass to factors, and that guarantee uniqueness of the MME.
This raised the following questions.
\begin{enumerate}[leftmargin=*, widest=1]
\item Are there examples of systems with non-uniform specification that do \emph{not} come from a countable-state Markov shift?  
\item Hofbauer's graph structure for the $\beta$-shifts (see also \cite{pW78} for equilibrium states for $\ph\neq 0$) can be used to get the stronger conclusions \ref{bernoulli}--\ref{analytic} that do not follow from \cite{CT1}.  Can these conclusions be obtained using non-uniform specification? 
\end{enumerate}
These questions motivated this paper, which says that the answers are ``no'' and ``yes'', respectively: 
non-uniform specification in the sense of \ref{spec}--\ref{stay-good} 
leads to a countable-state Markov structure with strong positive recurrence,
so its statistical consequences are just as strong.\footnote{It should be emphasised that although \ref{spec}--\ref{stay-good} are similar in spirit to the conditions in \cite{CT1}, and apply to all the examples studied so far using those conditions, there does not appear to be a logical relationship between the two sets of conditions in the sense of one implying the other.}
This can be interpreted as a negative result in the sense that 
every system to which Theorem \ref{thm:main} applies could also have been studied by other techniques, by building a tower or using a countable-state Markov shift.
On the other hand, if the system is defined in a manner that does not make this Markov structure explicit, then it may be difficult to find the graph that does the job, or to determine its properties; see \S\ref{sec:qft-intro} for examples.

One can interpret the Markov shift $\Sigma$ in Theorem \ref{thm:structure} as a Young tower as in \cite{lY99} by passing to its one-sided version $\Sigma^+$ and inducing on a single state to get a full shift.  Condition \ref{spr} on strong positive recurrence implies that the tower has \defn{`exponential tails'}, and \ref{lifts} guarantees that every equilibrium state is \defn{liftable} to the tower; see \cite{PSZ} for further discussion of this approach, and \cite{fH79,gK89,rZ04} for the original investigations of liftability.  The effort we expend to prove \ref{spr} and \ref{lifts} illustrates a general theme: \emph{even when it is  clear how to build a tower, it is usually a non-trivial problem to verify that equilibrium states lift to the tower and that the tower's tails decay exponentially}; see \cite{jK04,jB05,kT06} for symbolic examples, and  
\cite{gK89,PS08,BT09,IT10}
for smooth examples.
One goal of the present approach is to give a set of more readily verifiable conditions that can establish liftability and exponential tails.  This will be particularly valuable if it can be extended to the smooth setting.  The non-uniform specification properties from \cite{CT1,CT2} have been extended and applied to various smooth systems
\cite{CT4,CFT,CFT2,BCFT}, such as geodesic flows over rank 1 manifolds of nonpositive curvature.  It is expected that the results given here will admit a similar generalisation.


\subsection{Obstructions to specification}

Since subshift factors of $\beta$-shifts were the original examples studied in \cite{CT1}, it is natural to ask whether Theorems \ref{thm:main}--\ref{thm:spr-a} apply to them as well.  Unlike the non-uniform specification conditions in \cite{CT1}, condition \ref{stay-good} does not pass directly to factors.  Here we give conditions that do pass to factors (see \S\ref{sec:factors}), and which imply \ref{spec}--\ref{stay-good}.  These will also be useful for some of our applications (see \S\ref{sec:qft-intro}).

Given $\CCC^+,\CCC^-\subset A^*$ and $M\in \NN$, consider the collection
\begin{equation}\label{eqn:GMC}
\GGG(\CCC^\pm,M) := \{w\in \LLL \mid w_{[1,i]}\notin \CCC^-, w_{(\abs{w}-i,\abs{w}]}\notin \CCC^+ 
\text{ for all } M\leq i\leq \abs{w}\}
\end{equation}
of all words that do not start with a long element of $\CCC^-$ or end with a long element of $\CCC^+$.  
Say that $\CCC^\pm$ is a \defn{complete list of obstructions to specification} if
\begin{enumerate}[leftmargin=*, widest=I, 
label = \textup{\textbf{[\Roman{*}$^*${]}}}]
\item\label{spec*}
for every $M\in \NN$ there is $\tau=\tau(M)$ such that for all $v,w\in \GGG(\CCC^\pm,M)$ there is $u\in \LLL$ with $\abs{u}\leq \tau$ such that $vuw\in \LLL$.
\end{enumerate}
The word $vuw$ need not be in $\GGG(\CCC^\pm,M)$ (cf.\ \ref{spec}), so enlarging $\CCC^\pm$ cannot cause \ref{spec*} to fail; in particular, every $\CCC^\pm$ that contains a complete list of obstructions is itself a complete list. The following is used in \S\ref{sec:factors}  to study factors, and is proved in \S\ref{sec:CGC}.

\begin{theorem}\label{thm:CGC}
Let $X$ be a one- or two-sided shift space with language $\LLL$, and let $\ph\colon X\to \RR$ be H\"older.  Suppose $\CCC^\pm \subset \LLL$ is a complete list of obstructions to specification admitting the pressure bound $P(\CCC^- \cup \CCC^+,\ph) < P(\ph)$ and satisfying the following condition:
\begin{equation}\label{eqn:persistent2}
(vw\in \CCC^+ \Rightarrow v\in \CCC^+) \text{\quad and \quad}
(vw\in \CCC^- \Rightarrow w\in \CCC^-).
\end{equation}
Then there is $\GGG\subset \LLL$ satisfying \ref{spec}--\ref{stay-good},
so Theorems \ref{thm:main}--\ref{thm:spr-a} apply to $(X,\ph)$.
\end{theorem}

\begin{example}\label{eg:beta}
For the $\beta$-shift, let $\CCC^- = \emptyset$ and $\CCC^+ = \{ \zz_{[1,n]}\}_{n\in \NN}$, where $\zz$ is the $\beta$-expansion of $1$.    In the standard graph presentation, $\GGG(\CCC^\pm,M)$ is the set of words that label paths starting at the base vertex and ending in the first $M$ vertices, so it satisfies \ref{spec*} as shown in \cite{CT1}.  Thus $\CCC^\pm$ is a complete list of obstructions to specification.  
Clearly $\CCC^\pm$ satisfy \eqref{eqn:persistent2}, and it is shown in \cite{CT2} that $P(\CCC^+,\ph) < P(\ph)$ for every H\"older $\ph$.
\end{example}

\subsection{Shifts of quasi-finite type}\label{sec:qft-intro}

We apply Theorem \ref{thm:CGC} to the shifts of quasi-finite type introduced by Buzzi in \cite{jB05}.  
Say that $w\in \LLL$ is a \defn{left constraint} if there is $v\in \LLL$ such that $w_{[2,\abs{w}]}v\in \LLL$ but $wv\notin \LLL$; let
$\CCC^\ell$ be the collection of left constraints, and $\CCC^r$ the collection of analogously defined \defn{right constraints}.
The shift space $X$ is a \defn{shift of quasi-finite type} (QFT) if 
$\min\{ h(\CCC^\ell), h(\CCC^r) \} < h(\LLL)$.
Topologically mixing QFTs may have multiple MMEs \cite[Lemma 4]{jB05}.  The following result, proved in \S\ref{sec:qft}, gives uniqueness if \emph{both} lists of constraints have small entropy, or if we have a stronger mixing condition.


\begin{theorem}\label{thm:QFT}
Let $X$ be a one- or two-sided shift space on a finite alphabet with language $\LLL$ and let $\ph\colon X\to\RR$ be H\"older continuous.
\begin{enumerate}[leftmargin=*, widest=1, label=\textup{(\arabic{*})}]
\item\label{SQFT} If $X$ is topologically transitive, then  $\CCC^- = \CCC^r$ and $\CCC^+ = \CCC^\ell$ form a complete list of obstructions to specification and satisfy \eqref{eqn:persistent2}.  In particular, if $P(\CCC^\ell \cup \CCC^r, \ph) < P(\ph)$, then Theorem \ref{thm:CGC} applies.
\item\label{exact-QFT} Suppose $X^+ = \{x_1 x_2 \cdots \mid x\in X\}$ is \defn{topologically exact}: for every $w\in \LLL$ there is $N\in \NN$ such that $\sigma^N(\{x\in X^+ \mid x_{[1,\abs{w}]} = w\}) = X^+$.  Then $\CCC^- = \emptyset$ and $\CCC^+ = \CCC^\ell$ form a complete list of obstructions to specification and satisfy \eqref{eqn:persistent2}.  In particular, if $P(\CCC^\ell,\ph) < P(\ph)$ then Theorem \ref{thm:CGC} applies.
\end{enumerate}
\end{theorem}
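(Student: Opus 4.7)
The plan is to verify the two hypotheses of Theorem \ref{thm:CGC}, namely \eqref{eqn:persistent2} and the complete-list condition \ref{spec*}, after which the conclusion follows from the assumed pressure bound. Property \eqref{eqn:persistent2} is immediate from the definitions: if $vw \in \CCC^\ell$ is witnessed by $y \in \LLL$ with $v_{[2,|v|]} w y \in \LLL$ but $v w y \notin \LLL$, then $y' := wy$ witnesses $v \in \CCC^\ell$; the $\CCC^r$ implication is symmetric, and in part \ref{exact-QFT} the second implication is vacuous since $\CCC^- = \emptyset$.

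The key step for \ref{spec*} is to exploit the standard reformulation of the constraint sets in terms of follower and predecessor sets. Set $F^+(w) := \{y \in \LLL : wy \in \LLL\}$ and $F^-(w)$ analogously; then $w \in \CCC^\ell$ iff $F^+(w) \subsetneq F^+(w_{[2,|w|]})$, and dually for $\CCC^r$. If $w \in \GGG(\CCC^\pm, M)$, then no suffix of length $i \geq M$ is a left constraint, so writing $s_i$ for the length-$i$ suffix of $w$, the always-valid inclusions $F^+(s_i) \subseteq F^+(s_{i-1})$ telescope into equalities for $i = |w|, |w|-1, \ldots, M+1$, yielding $F^+(w) = F^+(s_M)$ whenever $|w|\geq M$. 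Dually, when $\CCC^- = \CCC^r$ one obtains $F^-(w) = F^-(p_M)$ for the length-$M$ prefix $p_M$. Words of length less than $M$ already lie in a finite set, so can be treated separately.

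For part \ref{SQFT}, the follower and predecessor equalities combine to give $vuw \in \LLL$ if and only if $sup \in \LLL$, where $s,p$ are the length-$M$ suffix of $v$ and prefix of $w$ respectively. Transitivity of $X$ furnishes a bridging word $u_{s,p}$ for each of the finitely many pairs $(s,p) \in \LLL_M \times \LLL_M$, so $\tau(M) := \max_{s,p}|u_{s,p}|$ witnesses \ref{spec*}. For part \ref{exact-QFT}, the condition $\CCC^- = \emptyset$ means only the suffix analysis is needed: $vuw \in \LLL$ iff $suw \in \LLL$ where $s$ is the length-$M$ suffix of $v$. Topological exactness of $X^+$ applied to each $s \in \LLL_M$ provides $N(s)$ with $\sigma^{N(s)}[s] = X^+$; unpacking this gives, for every $w \in \LLL$, some $u$ of length $N(s)-M$ with $suw \in \LLL$, so $\tau(M) := \max_{s \in \LLL_M}(N(s)-M)$ works. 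The main subtlety I anticipate is arranging the follower-set telescoping cleanly and confirming that the uniform bound $\tau(M)$ survives the corner cases where $|v|$ or $|w|$ drops below $M$; once that reduction is in place, the rest is a finite search over $\LLL_M \times \LLL_M$, and one clean invocation of Theorem \ref{thm:CGC} finishes the proof.
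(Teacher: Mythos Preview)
Your proposal is correct and is essentially the same argument as the paper's, just packaged via follower and predecessor sets rather than written out as an explicit one-symbol-at-a-time induction. The paper fixes $\tau$ so that any two words of length $\leq M$ can be bridged, finds $u$ with $s u p \in \LLL$ for the length-$M$ suffix $s$ of $v$ and prefix $p$ of $w$, and then extends to $vuw\in\LLL$ by inductively prepending symbols of $v$ (using $v_{(|v|-i,|v|]}\notin\CCC^\ell$) and appending symbols of $w$ (using $w_{[1,i]}\notin\CCC^r$); your telescoping of $F^+(s_i)=F^+(s_{i-1})$ to $F^+(v)=F^+(s_M)$ is exactly that induction stated in one line, and likewise for $F^-$. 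The exact case is handled identically in both. Your follower-set phrasing is arguably tidier, but there is no substantive difference in strategy.
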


Part \ref{exact-QFT} of Theorem \ref{thm:QFT} applies to many of the piecewise affine transformations studied by Buzzi in \cite{jB97}; these are maps on $[0,1]^d$ given by $f(x) = Ax + b \pmod {\ZZ^d}$, where $A\colon \RR^d\to \RR^d$ is an expanding linear map and $b\in \RR^d$.  Buzzi showed that the natural symbolic codings of such maps are QFTs with $h(\CCC^\ell) < h(\LLL)$, and that they are topologically exact if either (i) all eigenvalues of $A$ exceed $1+\sqrt{d}$ in absolute value, or (ii) $A,b$ have all integer entries.\footnote{Proposition 1 of \cite{jB97} only states that such $f$ are topologically mixing, but the proof in \cite[\S5.1, Lemma 5]{jB97} gives topological exactness.}  He proved that these examples have unique MMEs; Theorem \ref{thm:QFT}\ref{exact-QFT} gives another proof of this result, and lets it be extended to factors (see \S\ref{sec:factors}) and to a class of non-zero potential functions.  See also \cite{dT11} for further results on piecewise expanding maps.

\subsection{Synchronised shifts and uniform specification}

A shift $X$ is \defn{synchronised} if there is $s\in \LLL$ such that $vs,sw\in \LLL$ implies $vsw\in \LLL$; then $s$ is a \defn{synchronising word}.  Shifts with specification are synchronised \cite{aB88}, but not vice versa.  Note that $\beta$-shifts satisfy the non-uniform specification conditions \ref{spec}--\ref{stay-good} but are not all synchronised \cite{jS97}.

Synchronised shifts can have multiple equilibrium states even if they are topologically transitive.  
Every synchronised shift has a canonical presentation via a countable graph (the \defn{Fischer cover}), and Thomsen \cite{kT06} proved that the corresponding countable-state Markov shift is strongly positive recurrent (for the zero potential) if $h(\partial X) < h(X)$, where $\partial X$ is the \defn{derived shift} consisting of all $x\in X$ that can be approximated by periodic points of $X$ and do not contain any synchronising words.
The following gives a similar result for nonzero potentials (although the countable graph provided by Theorem \ref{thm:structure} need not be the Fischer cover); see \S\ref{sec:sync} for a proof.

\begin{theorem}\label{thm:sync}
Let $(X,\sigma)$ be a one- or two-sided topologically transitive shift with a synchronising word $s\in \LLL$.  Let $Y := \{x\in X \mid s \text{ does not appear in } x\}$.  If $\ph\colon X\to\RR$ is H\"older continuous and $P(Y,\ph) < P(X,\ph)$, then the conclusions of Theorems \ref{thm:main}--\ref{thm:spr-a} hold.
\end{theorem}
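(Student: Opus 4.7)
The plan is to verify the non-uniform specification conditions \ref{spec}--\ref{stay-good} of Theorem~\ref{thm:main} for the decomposition
\[
\GGG := \{w\in\LLL : s \text{ is both a prefix and a suffix of } w\},
\qquad
\CCC^p := \CCC^s := \LLL(Y),
\]
after which Theorem~\ref{thm:structure} yields all the stated conclusions for $(X,\sigma,\ph)$.

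For \ref{spec}, topological transitivity of $(X,\sigma)$ applied to the non-empty open set $[s]$ produces once and for all a word $u_0 \in \LLL$ with $s u_0 s \in \LLL$. Given $v,w \in \GGG$, write $v = v's$ and $w = sw''$ (allowing $v' = \emptyset$ when $v = s$, and similarly for $w$). Applying the synchronising property to the pair $(v's,\, su_0 s)$ gives $v' s u_0 s = v u_0 s \in \LLL$; applying it again to the pair $(v u_0 s,\, sw'')$ gives $v u_0 s w'' = v u_0 w \in \LLL$. Since $v u_0 w$ inherits the initial $s$ from $v$ and the terminal $s$ from $w$, it lies in $\GGG$, so \ref{spec} holds with $\tau = |u_0|$. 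Condition \ref{stay-good} is purely structural: setting $L := |s|$, the assumption $uv, vw \in \GGG$ with $|v|\geq L$ forces $v$ to inherit an initial $s$ from $vw$ and a terminal $s$ from $uv$, so $v \in \GGG$; similarly $uvw$ inherits its outer copies of $s$ from $uv$ and $vw$ respectively, and since $uvw \in \LLL$ by hypothesis, $uvw \in \GGG$.

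The heart of the argument lies in condition \ref{gap}. Any $w \in \LLL$ containing $s$ factors canonically as $w = pgq$, where $p$ is the prefix strictly before the first occurrence of $s$, $g$ runs from that first occurrence through the last, and $q$ is the suffix strictly after the last; then $g \in \GGG$ and $p, q$ contain no $s$. For such $w$ to lie in $\CCC^p \GGG \CCC^s$ one also needs $p, q \in \LLL(Y)$, which is not automatic: a priori $p$ or $q$ could be an $s$-avoiding word in $\LLL(X)$ that is ``trapped'', i.e.\ that does not extend to any bi-infinite $s$-avoiding sequence in $X$. The obstruction set $\CCC^p \cup \CCC^s \cup (\LLL \setminus \CCC^p \GGG \CCC^s)$ is therefore contained in the collection of $s$-avoiding words in $\LLL(X)$ together with words whose canonical decomposition has a trapped endpoint. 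The main obstacle of the proof is to bound the pressure of this set by $P(Y,\ph)$; the key claim is that on a transitive synchronised shift every $s$-avoiding word in $\LLL(X)$ differs from a word in $\LLL(Y)$ by at most a uniformly bounded modification at its endpoints, which I would establish using topological transitivity together with a compactness argument on the cylinders $[v]\cap X$. Granting this, $P(\CCC^p \cup \CCC^s \cup (\LLL \setminus \CCC^p \GGG \CCC^s),\ph) \leq P(Y,\ph) < P(\ph)$, giving \ref{gap} and completing the verification of the hypotheses of Theorem~\ref{thm:main}.
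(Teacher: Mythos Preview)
Your choice of $\GGG$ and your verifications of \ref{spec} and \ref{stay-good} match the paper exactly. The complication you flag for \ref{gap}, however, is self-inflicted: it arises from setting $\CCC^p = \CCC^s = \LLL(Y)$ rather than the larger collection $\LLL \setminus \LLL s\LLL$ of \emph{all} $s$-avoiding words in $\LLL(X)$.

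With $\CCC^p = \CCC^s := \LLL \setminus \LLL s\LLL$, the canonical factorisation $w = pgq$ you describe has $p,q \in \CCC^p = \CCC^s$ automatically, so every $w\in\LLL$ lies either in $\CCC^p$ (if it avoids $s$) or in $\CCC^p\GGG\CCC^s$, and hence $\CCC^p \cup \CCC^s \cup (\LLL\setminus\CCC^p\GGG\CCC^s) = \LLL\setminus\LLL s\LLL$. There are no ``trapped'' words to handle; your proposed endpoint-modification claim and compactness argument become unnecessary. (Note also that topological transitivity plays no role in \ref{gap}; it is used only to produce the connecting word for \ref{spec}.)

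The only remaining point is the identity $P(\LLL\setminus\LLL s\LLL,\ph) = P(Y,\ph)$, which the paper treats as immediate (indeed it writes $\LLL(Y) = \LLL\setminus\LLL s\LLL$). One direction is clear since $\LLL(Y)\subset\LLL\setminus\LLL s\LLL$. For the other, build the weighted empirical measures from $(\LLL\setminus\LLL s\LLL)_n$ as in \eqref{eqn:nun-mun}; any weak-* limit $\mu$ satisfies $h(\mu)+\int\ph\,d\mu\geq P(\LLL\setminus\LLL s\LLL,\ph)$ and gives zero mass to every shifted cylinder $\sigma^{-j}[s]$, hence is supported on $Y$, so the variational principle on $Y$ closes the loop. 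This is routine, not the ``main obstacle'' of the proof.
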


Shifts with the classical specification property are well-known to be synchronised \cite{aB88} and
to satisfy conclusions \ref{unique}--\ref{periodic} for every H\"older $\ph$ \cite{rB74}.  In particular, they
have a unique MME, which is fully supported, leading to the conclusion that $h(\partial X) < h(X)$.  Thus Thomsen's results could be used to establish conclusions \ref{bernoulli}--\ref{analytic} for the unique MME, and it seems likely that a similar approach would work for nonzero $\ph$, but it does not appear that this procedure has been carried out in the literature.

The proof of conclusions \ref{bernoulli}--\ref{analytic} in Theorem \ref{thm:main} relies on \cite{CS09}, which uses a Perron--Frobenius theorem and a spectral gap for the appropriate transfer operator associated to $\Sigma$.  
Ruelle \cite{dR92} proved a Perron--Frobenius theorem using specification directly, but did not establish spectral gap or a rate of convergence.  One class of shifts with specification for which the transfer operator (for $\ph = 0$) has been studied explicitly are the cocyclic subshifts introduced by Kwapisz \cite{jK00,jK04}; a \defn{cocyclic subshift} over a finite alphabet $A$ is defined by fixing a finite dimensional vector space $V$ and linear transformations $\Phi_i \in \End(V)$ for $i\in A$, then 
putting $\LLL = \{w\in A^* : 
\Phi_{w_1} \cdots \Phi_{w_{\abs{w}}} \neq 0\}$.  Transitive cocyclic subshifts often fail to be SFTs (or even sofic), but have specification and hence satisfy conclusions \ref{unique}--\ref{periodic} of Theorem \ref{thm:main} by \cite{rB74}.  For the zero potential, spectral properties of the transfer operator were studied in \cite{jK04}, although conclusions \ref{bernoulli}--\ref{analytic} were not discussed there.  Theorem \ref{thm:main} establishes these conclusions for H\"older potentials.

\subsection{Coded shifts}

Given a finite alphabet $A$ and a set of words $G \subset \bigcup_{n\in \NN} A^n$, let
\[
G^\infty := \{x\in A^\ZZ \mid \text{there is } (n_k)_{k\in \ZZ} \subset \ZZ \text{ with } n_k < n_{k+1} 
\text{ and } x_{[n_k,n_{k+1})} \in G \text{ for all } k\}.
\]
A two-sided shift space $X\subset A^\ZZ$ is called \defn{coded} if $X=\overline{G^\infty}$ for some $G$.
%
Such a $G$ is a \defn{generating set} for $X$.
There is a sense in which every question about equilibrium states for a shift satisfying \ref{spec}--\ref{stay-good} reduces to a question about coded shifts.  If $X$ satisfies \ref{spec}--\ref{stay-good} and $\pi\colon \Sigma\to X$ is the coding map from a countable-state Markov shift provided by Theorem \ref{thm:structure}, then $\overline{\pi(\Sigma)} \subset X$ is a coded subshift of $X$, and every equilibrium state for $\ph$ gives full weight to this coded shift.

A generating set $G$ is \defn{uniquely decipherable} if whenever $u^1 u^2 \cdots u^m = v^1 v^2 \cdots v^n$ with $u^i, v^j\in G$, we have $m=n$ and $u^j=v^j$ for all $j$
\cite[Definition 8.1.21]{LM95}.
The condition of unique decipherability for $G$ is related to \ref{stay-good} for $G^*$, but does not imply it.  Thus the following is not a direct consequence of Theorems \ref{thm:main}--\ref{thm:spr-a};
it follows from Corollary \ref{cor:coded} and Theorem \ref{thm:spr} below.

\begin{theorem}\label{thm:coded}
Let $X$ be a coded shift on a finite alphabet and $\ph$ a H\"older potential on $X$.  If $X$ has a uniquely decipherable generating set $G$ such that $\DDD = \DDD(G) := \{w\in \LLL \mid w $ is a subword of some $g\in G\}$ satisfies $P(\DDD,\ph) < P(\ph)$, then $(X,\ph)$ satisfies conclusions \ref{spr}--\ref{lifts} of Theorem \ref{thm:structure} and conclusions \ref{unique}--\ref{analytic} of Theorem \ref{thm:main}, and $\mu$ is Bernoulli up to a period.  The coding map $\pi$ may not be injective, but it is still finite-to-one $\mu$-a.e.\ for the unique equilibrium state $\mu$, and is injective on $\{\zz\in \Sigma : \zz$ is periodic, $\zz_0=a\}$ for each state $a$ of $\Sigma$.
\end{theorem}

It is natural to ask what happens if one removes the condition that $G$ be uniquely decipherable.  In this case one can still define a natural countable-state Markov shift $\Sigma$ and a coding map $\pi\colon \Sigma\to X$ such that $X = \overline{\pi(\Sigma)}$, but now we lose control of the multiplicity of $\pi$, and indeed, $\pi$ may decrease entropy; see Example \ref{eg:not1-1}.

It is shown in \cite[Proposition 2.1]{BH86} that every coded shift admits a uniquely decipherable generating set; similarly, \cite[Theorem 1.7]{FF92} shows that it is always possible to build a \defn{`bi-resolving'} $\Sigma$ for which $\pi$ is injective.  However, in both cases one must abandon the original generating set $G$ and pass to a new generating set $G'$, for which we may a priori have $P(\DDD(G'),\ph) = P(\ph)$, so Theorem \ref{thm:coded} may not apply.  On the other hand, the results in \cite{CT2} can be used to show that conclusions \ref{unique}--\ref{periodic} from Theorem \ref{thm:main} hold.  Thus we have the following open question.

\begin{question}\label{q:coded}
Let $X$ be a coded shift with language $\LLL$ and a (not uniquely decipherable) generating set $G\subset \LLL$ such that $P(\DDD(G),\ph) < P(\ph)$ for some H\"older $\ph$.  Let $\mu$ be the unique equilibrium state for $(X,\ph)$.   Do conclusions \ref{bernoulli}--\ref{analytic} of Theorem \ref{thm:main} still hold?  
\end{question}

\subsection{Factors}\label{sec:factors}

Before stating our results for factors we describe 
another motivating example from \cite{CT1} to which Theorem \ref{thm:CGC} applies; the $S$-gap shifts.

\begin{example}\label{eg:S-gap}
Given  $S\subset \NN \cup \{0\}$, the $S$-gap shift $X_S$ is the coded system with generating set $\{10^n \mid n\in S\}$.  
Take $\CCC^+ = \CCC^- = \{0^k \mid k\in \NN\}$, so $\CCC^\pm$ satisfy \eqref{eqn:persistent2}, and  $\GGG(\CCC^\pm,M) = \{0^a 1 w 1 0^b\in \LLL \mid a,b< M\}$.  Taking $\tau(M) = \min\{s\in S \mid s\geq 2M\}$, we see that any two words in $\GGG(\CCC^\pm,M)$ can be joined by $0^c$ for some $0\leq c\leq \tau(M)$, and so \ref{spec*} holds.  Thus $\CCC^\pm$ is a complete list of obstructions to specification, and one can show that $P(\CCC^+ \cup \CCC^-,\ph) < P(\ph)$ for every H\"older  $\ph$ \cite[\S5.1.3]{CTY}.
\end{example}

When $\ph=0$, we define the \defn{entropy of obstructions to specification} as\footnote{A similar but distinct quantity was defined in \cite{CT3}.}
\begin{equation}\label{eqn:hspec}
\hspec(X) = \inf\{ h(\CCC^+ \cup \CCC^-) \mid \CCC^\pm\subset \LLL(X) \text{ satisfy } \eqref{eqn:persistent2} \text{ and } \ref{spec*} \}.
\end{equation}
Note that $\beta$-shifts and $S$-gap shifts both have $\hspec(X) = 0$.  Although the most obvious way to get $\hspec(X)=0$ is to have $h(\CCC^+ \cup \CCC^-)=0$ for some $\CCC^\pm$, we expect that there are examples where $\hspec(X)=0$ but the infimum is not achieved; a natural class of candidates is given by shift spaces coding transitive piecewise monotonic transformations of the interval, whose structure has been described by Hofbauer \cite{fH79,fH81}.

It follows from Theorem \ref{thm:CGC} that $\hspec(X) < h(X)$ implies existence of a unique MME together with the other conclusions of Theorem \ref{thm:main}.  In \S\ref{sec:passing-to-factors}, we prove that $\hspec$ is non-increasing under passing to factors, and obtain the following results.\footnote{Compare Theorem \ref{thm:factors} to \cite[Corollary 2.3 and Theorem D]{CT1}; the conclusions here are stronger, and the hypotheses have some similarities but are independent.  Corollary \ref{cor:beta-S} strengthens \cite[Theorem A]{CT1}.}

\begin{theorem}\label{thm:factors}
Let $(X,\sigma)$ be a one- or two-sided shift space on a finite alphabet.
\begin{enumerate}[leftmargin=*, widest=1, label=\textup{(\arabic{*})}]
\item
Let $(\tilde X, \tilde\sigma)$ be a subshift factor of $(X,\sigma)$ such that $ h(\tilde X) > \hspec(X)$.  Then the language of $\tilde X$ contains a collection of words satisfying \ref{spec}--\ref{stay-good} for $\ph=0$, so $(\tilde X,\tilde \sigma,0)$ satisfies the conclusions of Theorems \ref{thm:main}--\ref{thm:spr-a}.
\item
Suppose $X$ satisfies $\gcd\{k\in\NN \mid \sigma^k(x) = x$ for some $x\in X\} = 1$ and $\hspec(X) = 0$, and that
there is $\GGG\subset \LLL$ satisfying \ref{spec} 
such that 
$\LLL w \LLL \cap \GGG\neq\emptyset$ for every $w\in \LLL$.\footnote{$\GGG$ need not satisfy \ref{gap} or \ref{stay-good}; in particular, it need not be the collection from Theorem \ref{thm:CGC}.}
Then every subshift factor of $(X,\sigma)$ satisfies the conclusion of the previous part.
\end{enumerate}
\end{theorem}

\begin{corollary}\label{cor:beta-S}
Let $\tilde X$ be a nontrivial subshift factor of a $\beta$-shift or of an $S$-gap shift.  Then $\tilde X$ 
satisfies the conclusions of Theorems \ref{thm:main}--\ref{thm:spr-a} for $\ph=0$.
\end{corollary}

\subsection{Hyperbolic potentials}\label{sec:hyperbolic}

Theorem \ref{thm:factors} only deals with measures of maximal entropy, so one may ask what can be said about equilibrium states for non-zero potentials on the factors $(\tilde X,\tilde \sigma)$.  Following \cite{IRRL12}, say that  $\ph\colon X\to \RR$ is \defn{hyperbolic} if
\begin{equation}\label{eqn:hyperbolic}
\ulim_{n\to\infty} \sup_{x\in X} \frac 1n S_n \ph(x) < P(\ph).
\end{equation}
If $\ph$ is hyperbolic and $\hspec(X)=0$, then one can easily find $\CCC^\pm$ satisfying \eqref{eqn:persistent2}, \ref{spec*}, and $P(\CCC^- \cup \CCC^+,\ph) < P(\ph)$; in particular, Theorem \ref{thm:CGC} applies as long as $\ph$ is H\"older.

Every H\"older potential is hyperbolic when $X$ has specification \cite[Theorem 6.1]{CFT2}.  Buzzi proved that the same is true for the coding spaces of continuous topologically transitive piecewise monotonic interval maps \cite{jB04}, and conjectured that the result remains true without the assumption that the map is continuous.\footnote{The H\"older condition on the potential is with respect to the coding space, not the interval itself; H\"older on the interval is not enough, as the Manneville--Pomeau example shows.}
The result is known for a broad class of non-uniformly expanding interval maps \cite{LRL14}, and for $\beta$-shifts  \cite[Proposition 3.1]{CT2}  and $S$-gap shifts \cite[(5.1)]{CTY}.  The proofs of this result for $\beta$-shifts and for $S$-gap shifts are very specific to these examples and in particular do not pass to their factors.

In the other direction, if $X$ is the coded shift generated by $\{0^n1^n \mid n\in \NN\}$, then $\ph = t\mathbf{1}_{[0]}$ is not hyperbolic for $\abs{t}$ sufficiently large \cite{sC}.

\begin{question}
Is there an axiomatic condition on a shift space $X$, weaker than specification (perhaps some form of non-uniform specification), guaranteeing that every H\"older potential on $X$ is hyperbolic?  Is there such a condition that is preserved under passing to factors?  In particular, does every subshift factor of a $\beta$-shift or an $S$-gap shift have the property that every H\"older potential is hyperbolic?
\end{question}

\begin{remark}
Since this paper was completed, the author and V.\ Cyr have shown that if there is $\GGG\subset \LLL$ satisfying \ref{spec} such that every $w\in \LLL$ can be transformed into a word in $\GGG$ by changing at most $o(\log \abs{w})$ of its symbols, then every H\"older potential on $X$ is hyperbolic \cite{CC}.  This condition passes to factors and settles the above question in the affirmative for subshift factors of $\beta$-shifts.  However, $S$-gap shifts require 
more edits than this result allows, so the question for subshift factors of $S$-gap shifts is still open.
\end{remark}

\iftoggle{arxiv}{
\subsection*{Acknowledgments}
I am grateful to the anonymous referees for many comments that improved the exposition and for spotting errors in earlier versions of the result on factors and of Lemma \ref{lem:reset}; the latter, which was also pointed out to me by Qu Congcong, necessitated a change in the formulation of condition \ref{spec} from previous versions.
I am also grateful to Omri Sarig for clarifying aspects of strong positive recurrence as they appear in \S\ref{sec:shift-spaces}, and to Dominik Kwietniak for introducing me to cocyclic subshifts and \cite{jK00,jK04}.
}{}

\section{Definitions}\label{sec:defs}

\subsection{Shift spaces and thermodynamic formalism}\label{sec:shift-spaces}

As  in  \S\ref{sec:result}, we write $\LLL \subset A^*$ for the language of a shift space $X$ over a finite alphabet $A$.
When we work with an indexed collection of words, we write  indices as superscripts; thus $w^1, w^2$ represent two different words, while $w_1, w_2$ represent the first and second symbols in the word $w$.
We write $\abs{w}$ for the length of a word $w$; given $1\leq i\leq j\leq \abs{w}$, we write $w_{[i,j]} = w_i\cdots w_j$.  When convenient, we write $w_{(i,j]} = w_{[i+1,j]}$, and similarly for $w_{[i,j)}$ and $w_{(i,j)}$.   We use the same notation for subwords of an infinite sequence $x\in X$, allowing $i=-\infty$ or $j=\infty$.

Juxtaposition denotes concatenation and will be used liberally throughout the paper both for words and for collections of words; for example, given $w\in A^*$, we will have occasion to refer to the following sets, or similar ones:
\begin{align*}
w\LLL \cap \LLL &= \text{the set of all words in $\LLL$ that begin with $w$},\\
\LLL\setminus \LLL w \LLL &= \text{the set of all words in $\LLL$ that do not contain $w$ as a subword}.
\end{align*}
Given a collection $\DDD\subset \LLL$, we write
\[
\DDD_n := \{w\in \DDD \mid \abs{w}=n\},
\qquad
\DDD_{\leq n}:=\{w\in \DDD \mid \abs{w}\leq n\},
\qquad
\DDD_{\geq n}:=\{w\in \DDD \mid \abs{w}\geq n\}.
\]
We recall the basics of thermodynamic formalism (adapted to the symbolic setting) and fix our notation.  See \cite{pW82,CT1,CT2} for further details.
Given a continuous function $\ph\colon X\to\RR$, called a \defn{potential},  let $S_n\ph(x) = \sum_{j=0}^{n-1} \ph(\sigma^jx)$ and define $\hat\ph\colon \LLL \to \RR$ by
\begin{equation}\label{eqn:Phi}
\hat\ph(w) := \sup_{x\in [w]} S_{\abs{w}}\ph(x),
\end{equation}
where $[w] := \{x\in X \mid x_{[0,\abs{w})}=w\}$ is the \defn{cylinder} defined by $w$.  Given $\DDD\subset \LLL$, let
\begin{equation}\label{eqn:pressure}
\Lambda_n(\DDD,\ph) := \sum_{w\in \DDD_n} e^{\hat\ph(w)}, 
\qquad
P(\DDD,\ph) 
:= \ulim_{n\to\infty} \frac 1n\log \Lambda_n(\DDD,\ph).
\end{equation}
The first quantity is the \defn{partition sum} of $\ph$ on $\DDD$, and the second is the \defn{pressure} of $\ph$ on $\DDD$; note that this agrees with  \eqref{eqn:PD}. It follows from \eqref{eqn:Phi} and \eqref{eqn:pressure} that
\begin{equation}\label{eqn:delta-P}
\abs{P(\DDD,\ph+\psi) - P(\DDD,\ph)} \leq \|\psi\| := \sup\{ \abs{\psi(x)} : x\in X\}
\end{equation}
for all $\DDD\subset \LLL$ and continuous $\ph,\psi$.
We will need the following consequence of \eqref{eqn:pressure}:
\begin{equation}\label{eqn:PCD}
P(\CCC \cup \DDD,\ph) = \max\{P(\CCC,\ph), P(\DDD,\ph)\}
\text{ for every } \CCC,\DDD\subset \LLL.
\end{equation}
When $\DDD = \LLL$  we write $\Lambda_n(\ph) := \Lambda_n(\LLL,\ph)$ and  $P(\ph) := P(\LLL,\ph)$.
In \S\ref{sec:result} we defined $P(\ph)$ as
$\sup\{h(\mu) + \int \ph\,d\mu : \mu \in \Ms(X)\}$, where $\Ms(X)$ is the space of all $\sigma$-invariant Borel probability measures on $X$, and $h(\mu)$ is the measure-theoretic (Kolmogorov--Sinai) entropy.  This agrees with the above definition by the variational principle \cite[Theorem 9.10]{pW82}.  A measure $\mu\in \Ms(X)$ achieving the supremum is an \defn{equilibrium state}.

When $\ph=0$ we write $h(\DDD) := P(\DDD,0)$ for the \defn{entropy} of $\DDD$, and $h(X) := h(\LLL)$ for the \defn{topological entropy} of the shift space $X$; an equilibrium state for $\ph=0$ is called a \defn{measure of maximal entropy}.

Because $\sigma$ is expansive, the map $\mu \mapsto h(\mu) + \int\ph\,d\mu$ is upper semicontinuous on the weak* compact set of all invariant Borel probability measures, so it achieves its supremum.  Thus
equilibrium states exist for all continuous $\ph\colon X\to\RR$.  To get uniqueness and statistical properties, we require a regularity condition on $\ph$.  Given $\beta>0$, let
\[
\abs{\ph}_\beta := \sup\{ \abs{\ph(x)-\ph(y)}/d(x,y)^\beta : x,y\in X, x\neq y\}.
\]
Write $\Cb(X) := \{\ph\colon X\to\RR$ continuous $: \abs{\ph}_\beta < \infty\}$.  
Note that for every $n\geq 0$, we have
\begin{equation}\label{eqn:Holder}
\abs{\ph(x) - \ph(y)} \leq \abs{\ph}_\beta e^{-\beta n} \text{ whenever } x,y\in X \text{ satisfy } x_k=y_k \text{ for all } \abs{k}\leq n.
\end{equation}
We write $\Ch(X) := \bigcup_{\beta>0} \Cb(X)$ for the set of all H\"older functions.  In addition to the H\"older semi-norm $\abs{\ph}_\beta$ we will use the 
H\"older norm $\|\ph\|_\beta := \|\ph\| + \abs{\ph}_\beta$.  We note that
\begin{equation}\label{eqn:norm-shift}
\|\ph \circ \sigma\| = \|\ph\|
\quad\text{and}\quad
\abs{\ph\circ\sigma}_\beta \leq e^{\beta} \abs{\ph}_\beta.
\end{equation}

We also work with Markov shifts on an infinite alphabet:
given a directed graph with a countably infinite vertex set $V$, one defines a \defn{countable-state Markov shift} $\Sigma \subset V^\ZZ$ by the condition that $\zz\in V^\ZZ$ is in $\Sigma$ iff the graph contains an edge from $\zz_n$ to $\zz_{n+1}$ for every $n\in \ZZ$.  One-sided shifts are defined similarly.
We write $T\colon \Sigma\to\Sigma$ for the shift map in the countable-state Markov case, to distinguish from the shift $\sigma\colon X\to X$.
All of our countable-state Markov shifts are \defn{topologically transitive}: for any $a,b\in V$ there is a path in the graph that leads from $a$ to $b$.
Given a vertex $a\in V$, let
\begin{equation}\label{eqn:PP*}
\begin{aligned}
\Per_n(\Sigma,a) &:= \{ \zz\in \Sigma : T^n\zz = \zz \text{ and } \zz_0 = a \},
\\
\Per_n^*(\Sigma,a) &:= \{ \zz\in \Per_n(a) : \zz_i \neq a \text{ for all } 1\leq i\leq n-1 \}.
\end{aligned}
\end{equation}
We also write
\begin{equation}\label{eqn:Per}
\Per(\Sigma,a) := \bigcup_{n\in \NN} \Per_n(\Sigma,a) \quad\text{and}\quad
\Per_n(\Sigma) := \bigcup_{a\in V} \Per_n(\Sigma,a).
\end{equation}
Let $\Sigma$ be topologically transitive and put
\begin{equation}\label{eqn:d}
d = d(\Sigma) := \gcd 
\{ n\in \NN : \Per_n(\Sigma)\neq\emptyset \}.
\end{equation}
Then $\Sigma$ is  \defn{topologically mixing} if $d=1$, and when $d>1$ there is a \defn{spectral decomposition} of $\Sigma$ into disjoint closed sets $\Sigma_0,\dots,\Sigma_{d-1}$ such that $T(\Sigma_i) = \Sigma_{(i+1)\bmod d}$ and each $(\Sigma_i, T^d)$ is a topologically mixing countable-state Markov shift upon recoding by cylinders of length $d$ \cite[Remark 7.1.35]{bK98}.

Given a countable-state Markov shift $\Sigma$ and a shift $X$ on a finite alphabet $A$, we will be interested in \defn{1-block codes} $\pi\colon \Sigma\to X$; this means that there is a map $\tau\colon V\to A$ such that $\pi(\zz)_n = \tau(\zz_n)$ for every index $n$.

We use the thermodynamic formalism for countable-state Markov shifts developed by Sarig and co-authors in \cite{oS99,BS03,CS09}.  Those results are for one-sided shifts, but standard techniques extend the parts we need to two-sided shifts; see \S\ref{sec:spr-statistics}.

As above, $\abs{\Phi}_\beta = \sup \{ \abs{\Phi(\zz) - \Phi(\zz')} / d(\zz,\zz')^\beta : \zz,\zz' \in \Sigma, \zz\neq\zz'\}$  is the H\"older semi-norm of $\Phi$, and $\Cb(\Sigma) = \{\Phi\colon \Sigma\to \RR : \abs{\Phi}_\beta < \infty\}$, so that for every $n\geq 0$ we have
\begin{equation}\label{eqn:Holder2}
\abs{\Phi(\zz) - \Phi(\zz')} \leq \abs{\Phi}_\beta e^{-\beta n} \text{ whenever } \zz,\zz' \in \Sigma \text{ satisfy } \zz_k = \zz_k' \text{ for all } \abs{k}\leq n.
\end{equation}
Let $\Ch(\Sigma)  := \bigcup_{\beta>0} \Cb(\Sigma)$ be the set of H\"older continuous functions.\footnote{Sarig's results hold for the broader class of 
\defn{locally H\"older continuous} $\Phi$, for which there is $\abs{\Phi}_\beta$ such that \eqref{eqn:Holder2} holds for all $n\geq 1$, 
with no requirement on $n=0$; equivalently, with no requirement that $\Phi$ be bounded.  All the potentials we deal with will be bounded.}
Given $\Phi\in \Ch(\Sigma)$ and $a\in V$, 
write
\begin{align}
\label{eqn:Zn}
Z_n(\Phi,a) := \sum_{\zz\in \Per_n(\Sigma,a)}
e^{S_n\Phi(\zz)}, 
\qquad
Z_n^*(\Phi,a) := \sum_{\zz\in \Per_n^*(\Sigma,a)}
 e^{S_n\Phi(\zz)}.
\end{align}
These vanish if $n$ is not a multiple of $d$ from \eqref{eqn:d}.\footnote{The definitions here are used by Sarig in the topologically mixing case $d=1$; since the shifts we use may have $d>1$ before we apply the spectral decomposition, we make these definitions more generally.  See also \cite{BS03} for the extension from transitivity to mixing.}
The \defn{Gurevich pressure} of $\Phi$ is
\begin{equation}\label{eqn:gurevich}
P_G(\Phi) := \ulim_{n\to\infty} \frac 1n \log Z_n(\Phi,a);
\end{equation}
this is independent of $a$ by topological transitivity.  If $\Sigma$ is topologically mixing then the limit exists.  When $\Phi=0$ we obtain the \defn{Gurevich entropy} $h_G(\Sigma) = P_G(0)$.

The potential $\Phi$ is 
\defn{strongly positive recurrent (SPR)} if
\begin{equation}\label{eqn:SPR}
\ulim_{n\to\infty} \frac 1n \log Z_n^*(\Phi,a) < P_G(\Phi).
\end{equation}
The definition of SPR in \cite{oS01} is given in terms of positivity of 
a certain \defn{discriminant}. 
Equivalence of the two definitions (in the topologically mixing case) follows from \cite{oS01} but is not explicitly stated there; for completeness we prove it in \S\ref{sec:spr-pf}.

\subsection{Statistical properties}\label{sec:statistical-properties}

We recall several statistical properties a measure can have, which will all be satisfied for the unique equilibrium state produced by \ref{spec}--\ref{stay-good}.

\subsubsection{Gibbs property}\label{sec:gibbs}

Given $\GGG\subset \LLL$ and $\ph\colon X\to \RR$, we say that $\mu$ has the \defn{Gibbs property for $\ph$ on $\GGG$} if there is $Q_1>0$ such that\footnote{We will use $Q_1,Q_2,\dots$ to denote `global constants' that are referred to throughout the paper.  We will use $K$ or $C$ for `local constants' that appear only within the proof of a given lemma or proposition, and are not used for more than one or two paragraphs.}
\begin{equation}\label{eqn:gibbs}
\begin{aligned}
&\text{for every $w\in \LLL$, we have $\mu[w]\leq Q_1e^{-\abs{w}P(\ph) + \hat\ph(w)}$; and}\\
&\text{for every $w\in \GGG$, we have $\mu[w]\geq Q_1^{-1} e^{-\abs{w}P(\ph) + \hat\ph(w)}$.}
\end{aligned}
\end{equation}
Note that the lower bound is only required on $\GGG$, while the upper bound holds on all of $\LLL$.  This weakened version of the Gibbs property was introduced in \cite{CT1,CT2}.

\subsubsection{Periodic orbits}

For $n\in \NN$, let $\Per_n(X) = \{x\in X \mid \sigma^n x=x \}$ be the set of $n$-periodic points.  Note that $\Per_n(X)$ is finite.  Let
\begin{equation}\label{eqn:mun}
\mu_n := \frac 1{\sum_{k=1}^n \sum_{x\in \Per_k(X)} e^{S_k\ph(x)}}
\sum_{k=1}^n \sum_{x\in \Per_k(X)} e^{S_k\ph(x)} \delta_x
\end{equation}
be the \defn{$\ph$-weighted periodic orbit measure} corresponding to periodic orbits of length at most $n$.  Say that $\mu$ is the \defn{limiting distribution of $\ph$-weighted periodic orbits} if $\mu_n$ converges to $\mu$ in the weak* topology.

\subsubsection{Bernoulli property}

Given a state space $S$ and a probability vector $p = (p_a)_{a\in S}$, the \defn{Bernoulli scheme} with probability vector $p$ is $(S^\ZZ, \sigma, \mu_p)$, where $\sigma$ is the left shift map and $\mu_p[w] = \prod_{i=1}^{\abs{w}} p_{a_i}$ for every $w\in S^*$.  When $X$ is a two-sided shift space, we say that $\mu$ has the \defn{Bernoulli property up to a period}
if there are disjoint sets $Y_0,\dots, Y_{d-1} \subset X$ such that $\mu(\bigcup_{i=0}^{d-1} Y_i) =1$, $\sigma(Y_i) = Y_{(i+1)\bmod d}$, and $(Y_i,\sigma^d,\mu|_{Y_i}\cdot d)$ is measure-theoretically isomorphic to a Bernoulli scheme.
In particular, this means that $(X,\sigma,\mu)$ is measure-theoretically isomorphic to the direct product of a Bernoulli scheme and a finite rotation.

The analogous property for one-sided shifts is stronger than we can reasonably expect to obtain in general, since entropy is not a complete invariant of one-sided Bernoulli schemes.  Thus for one-sided shift spaces, all that our results give regarding the Bernoulli property is that the natural extension is Bernoulli up to a period.

\subsubsection{Decay of correlations}

Let $(X,\sigma,\mu)$ be a shift space with an invariant measure $\mu$.  Given $\psi_1,\psi_2\colon X\to \RR$, consider the \defn{correlation functions}
\[
\Cor^\mu_n(\psi_1,\psi_2) := \int (\psi_1\circ \sigma^n) \psi_2 \,d\mu - \int \psi_1\,d\mu \int \psi_2\,d\mu.
\]
Here we take $n\in \NN$ if $X$ is one-sided, and $n\in \ZZ$ is $X$ is two-sided.
We say that the system has \defn{exponential decay of correlations} for observables in $\Cb(X)$ if there is $\theta \in (0,1)$ such that for every $\psi_1,\psi_2 \in \Cb(X)$ there is $K(\psi_1,\psi_2)>0$ such that
\begin{equation}\label{eqn:edc}
\abs{\Cor_n^\mu(\psi_1,\psi_2)} \leq K(\psi_1,\psi_2) \theta^{\abs{n}}
\text{  for every $n$}.
\end{equation}
Since $\Cor_n^\mu$ is linear in both $\psi_1,\psi_2$, it suffices to consider the case $\int \psi_1\,d\mu =\int \psi_2\,d\mu=0$, so $\Cor_n^\mu(\psi_1,\psi_2) = \int (\psi_1 \circ \sigma^n) \psi_2\,d\mu$.
It is also worth noting that although \eqref{eqn:edc} does not specify how $K$ depends on $\psi_1,\psi_2$, we can use the following lemma, proved in \S\ref{sec:UBP}, to deduce existence of $C>0$ such that $K(\psi_1,\psi_2) = C\|\psi_1\|_\beta \|\psi_2\|_\beta$ satisfies \eqref{eqn:edc}. 

\begin{lemma}\label{lem:UBP}
Let $(\mathcal{B}_1,\|\cdot\|_1)$ and $(\mathcal{B}_2,\|\cdot\|_2)$ be Banach spaces, and $\{F_i \colon \BBB_1\times\BBB_2\to \RR\}_{i\in I}$ a family of continuous bilinear functions.  Suppose $\Delta\colon I\to (0,\infty)$ is such that for every $\psi_1\in \BBB_1$ and $\psi_2\in \BBB_2$, there is $K(\psi_1,\psi_2)>0$ such that $\abs{F_i(\psi_1,\psi_2)} \leq K(\psi_1,\psi_2) \Delta(i)$ for all $i\in I$.  
Then there is $C>0$ such that
for all $\psi_1\in \BBB_1$, $\psi_2\in \BBB_2$, and $i\in I$, we have
 $\abs{F_i(\psi_1,\psi_2)} \leq C \|\psi_1\|_1 \|\psi_2\|_2 \Delta(i)$.
\end{lemma}

Lemma \ref{lem:UBP} applies to the estimate in \eqref{eqn:edc} by taking $I=\ZZ$ for two-sided shifts and $I=\NN$ for one-sided shifts, then putting $F_n(\psi_1,\psi_2) = \Cor_n^\mu(\psi_1,\psi_2)$ and $\Delta(i) = \theta^{\abs{i}}$.

As with the Bernoulli property, we say that $(X,\sigma,\mu)$ has exponential decay of correlations \defn{up to a period} if 
if there are disjoint sets 
$Y_0,\dots, Y_{d-1} \subset X$ such that $\mu(\bigcup_{i=0}^{d-1} Y_i) =1$, $\sigma(Y_i) = Y_{(i+1)\bmod d}$, and $(Y_i,\sigma^d,\mu|_{Y_i}\cdot d)$ has exponential decay of correlations.

\subsubsection{Central limit theorem}

Given $(X,\sigma,\mu)$ as above and $\psi\colon X\to \RR$, we say that the \defn{central limit theorem} holds for $\psi$ if $\frac 1{\sqrt{n}} S_n(\psi - \int\psi\,d\mu)$ converges in distribution to a normal distribution $\mathcal{N}(0,\sigma_\psi)$ for some $\sigma_\psi\geq 0$; that is, if
\[
\lim_{n\to\infty} \mu \left\{ x \mid \frac 1{\sqrt{n}} \left( S_n \psi(x) - n\int\psi\,d\mu\right) \leq \tau \right\}
= \frac 1{\sigma_\psi\sqrt{2\pi}} \int_{-\infty}^\tau e^{-t^2/(2\sigma_\psi^2)} \,dt
\]
for every $\tau\in \RR$.  (When $\sigma_\psi=0$ the convergence is to the Heaviside function.)
Say that $\psi$ is \defn{cohomologous to a constant} if there are a $\mu$-integrable function $u\colon X\to \RR$ and a constant $c\in \RR$ such that $\psi(x) = u(x) - u(\sigma x) + c$ for $\mu$-a.e.\ $x\in X$.  One generally expects that $\sigma_\psi=0$ if and only if $\psi$ is cohomologous to a constant.  This will hold for us as well.


\section{Intermediate results and structure of the proofs}\label{sec:subsidiary-results}

We will prove Theorem \ref{thm:structure} via Theorems \ref{thm:get0spec} and \ref{thm:tower} below (and their corollaries); for Theorem \ref{thm:spr-a}, see Theorem \ref{thm:spr}.  The strategy for Theorem \ref{thm:structure} is to use \ref{spec}--\ref{stay-good} to produce $\FFF\subset \LLL$ satisfying the following \defn{free concatenation} property, which 
is just \ref{spec} with $\tau=0$.\footnote{When $\tau=0$, the gluing word $u$ is forced to be the empty word, and thus is independent of $v,w$.}
\begin{enumerate}[leftmargin=*, widest=I, label = \textup{\textbf{[\Roman{*}$_0${]}}}]
\item\label{free}
Given any $v,w\in \FFF$ we have $vw\in \FFF$.
\end{enumerate}
Once the collection $\FFF$ has been produced there is a natural way to construct a countable-state Markov shift $\Sigma$ that represents (part of) $X$. 
Writing
\begin{equation}\label{eqn:I}
I=I(\FFF) := \FFF \setminus \FFF \FFF = \{w\in \FFF \mid w\neq uv \text{ for any non-trivial } u,v\in \FFF\}
\end{equation}
for the set of \defn{irreducible} elements of $\FFF$,\footnote{We adopt the convention that $\FFF$ does not contain the empty word.} we use the alphabet
\begin{equation}\label{eqn:B}
A_I = \{ (w,k) \in I\times \NN \mid w\in I \text{ and } 1\leq k \leq \abs{w} \};
\end{equation}
think of $(w,k)$ as representing the state ``we are currently in the word $w$, and have seen the first $k$ symbols of $w$''.  Let $\Sigma = \Sigma(\FFF) \subset (A_I)^\ZZ$ be the Markov shift with transitions
\begin{equation}\label{eqn:transitions}
\begin{aligned}
(w,k) &\to (w,k+1) \text{ for any } w\in I \text{ and } 
1\leq k < \abs{w}, \\
(w,\abs{w}) &\to (v,1) \text{ for any } w,v\in I.
\end{aligned}
\end{equation}
Then $\Sigma$ is topologically transitive, and the value $d$ from \eqref{eqn:d} is given by $d = \gcd\{ \abs{w} : w\in \FFF \}$.  In particular, $\Sigma$ is topologically mixing if and only if $\gcd\{\abs{w}:w\in \FFF\}=1$.
Define a 1-block code $\pi\colon \Sigma\to X$ by $(w,k) \mapsto w_k$, the $k$th symbol of $w$. 
For $\Sigma$ and $\pi$ to satisfy the conclusions of Theorem \ref{thm:structure}, we need two more conditions on $I$ and $\FFF = I^*$.
\begin{enumerate}[leftmargin=*, widest=II,
label = \textup{\textbf{[\Roman{*}$'${]}}}]
\setcounter{enumi}{1}
\item\label{gap-2}
 $P(I,\ph) < P(\ph)$, and there are $\Ep,\Es \subset \LLL$ with $P(\Ep \cup \Es \cup (\LLL \setminus \Ep\FFF\Es),\ph) < P(\ph)$.
\end{enumerate}
Note that if $\FFF$ satisfies \ref{gap-2}, then $\GGG = \FFF$ satisfies \ref{gap} with $\Cps = \Eps$.
We also need the following variant of \ref{stay-good}.
\begin{enumerate}[leftmargin=*, widest=III, label = \textup{\textbf{[\Roman{*}$^*${]}}}]
\setcounter{enumi}{2}
\item\label{overlaps}
If $x\in X$ and $i\leq j\leq k\leq \ell$ are integers such that $x_{[i,k)},x_{[j,\ell)}\in \FFF$, and there are $a<j$ and $b>k$ such that $x_{[a,j)},x_{[k,b)}\in \FFF$, then $x_{[j,k)}\in \FFF$.
\end{enumerate}
To compare \ref{overlaps} and \ref{stay-good} it is helpful to reformulate the latter as follows.\footnote{Note that \ref{overlaps} has no requirement on $k-j$, while \ref{stay-good} does not ask for $a,b$, so neither of these conditions implies the other one.}
\begin{enumerate}[leftmargin=*, widest=III, label = \textup{\textbf{[\Roman{*}{]}}}]
\setcounter{enumi}{2}
\item There is $L\in \NN$ such that if $x\in X$ and $i\leq j \leq k\leq \ell$ are integers such that $k-j \geq L$ and $x_{[i,k)},x_{[j,\ell)}\in \GGG$, then $x_{[j,k)}, x_{[i,\ell)} \in \GGG$.
\end{enumerate}
Figure \ref{fig:overlaps} illustrates both conditions.  Although $i,j,k,\ell$ must appear in the order shown, $a$ can be to either side of $i$ (or equal to it), so long as $a <j$; similarly, there is no constraint on $b$ and $\ell$.  There is no requirement in \ref{overlaps} that $k-j$ be large; in practice we will produce $\FFF$ such that every overlap has length at least $L$, so the case $k-j<L$ will never arise.

\begin{figure}[htbp]
\includegraphics[width=300pt]{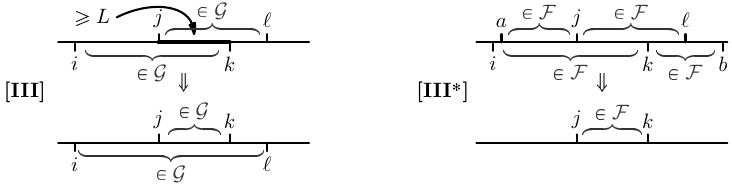}
\caption{Conditions \ref{stay-good} and \ref{overlaps}.}
\label{fig:overlaps}
\end{figure}

Condition \ref{stay-good} implies the following two conditions (but not vice versa). These can be used in its place in Theorems \ref{thm:main} and \ref{thm:structure},\footnote{We will use \ref{inter} repeatedly, while \ref{union} only appears once (see the footnote at the end of \S\ref{sec:get-gap}).} which is useful for the applications in \cite{CP}.
\begin{enumerate}[leftmargin=*, widest=a, label = \textup{\textbf{[III$_\mathrm{\alph{*}}${]}}}]
\item\label{inter}
There is $L$ such that if $uv,vw\in \GGG$, $\abs{v}\geq L$, and $uvw\in \LLL$, then $v\in \GGG$.
\item\label{union}
There is $L$ such that if $uv,vw\in \GGG$, $\abs{v}\geq L$, and $xuvw\in \GGG$ for some $x\in \LLL$, then $uvw\in \GGG$.
\end{enumerate}
The following result
gives one half of the proof of Theorem \ref{thm:structure}; the other half is given by 
Theorem \ref{thm:tower} and its Corollaries \ref{cor:2-sided} and \ref{cor:1-sided}. 

\begin{theorem}\label{thm:get0spec}\label{THM:GET0SPEC}
Let $X$ be a one- or two-sided shift space 
on a finite alphabet, and let $\ph \in \Ch(X)$. Suppose there is $\GGG\subset \LLL = \LLL(X)$ satisfying \ref{spec},  \ref{gap}, \ref{inter}, and \ref{union}.  Then there is $\FFF \subset \LLL$ satisfying \ref{free}, \ref{gap-2}, and \ref{overlaps} such that a measure $\mu$ has the Gibbs property for $\ph$ on $\GGG$ if and only if it has the Gibbs property for $\ph$ on $\FFF$.  
\begin{samepage}
If in addition $\GGG$ satisfies
\begin{enumerate}[leftmargin=*, widest=I, label = \textup{\textbf{[\Roman{*}$'${]}}}]
\item\label{s-spec}
there is $\tau\in \NN$  such that for all $v,w\in \GGG$, there is $u\in \LLL$ with $\abs{u}= \tau$ such that $v'uw'\in\GGG$ whenever $v'\in\GGG$ is a suffix of $v$ and $w'\in\GGG$ is a prefix of $w$,
\end{enumerate}
then $\gcd\{ \abs{w} \mid w\in \FFF\} = \gcd \{ \abs{v} + \tau \mid v\in \GGG\}$.
\end{samepage}
\end{theorem}

The proof of Theorem \ref{thm:get0spec} is given in \S\ref{sec:get0spec}.


\begin{remark}\label{rmk:hE}
Although Theorem \ref{thm:get0spec} guarantees that $\EEE := I \cup \Ep \cup \Es \cup (\LLL \setminus \Ep \FFF \Es)$ has $P(\EEE,\ph) < P(\ph)$ (by \ref{gap-2}), we may have $P(\EEE,\ph) > P(\Cp \cup \Cs \cup (\LLL\setminus \Cp\GGG\Cs,\ph)$.
This happens already for SFTs;
let $X$ be the SFT on the alphabet $\{1,\dots,k\}$ where $x$ is allowed if and only if $x_{n+1} - x_n = 1 \text{ or } 2 \pmod k$ for every $n$.  Then $h(X) = \log 2$, and we show in \S\ref{sec:hE} that if $\FFF\subset \LLL$ is any collection satisfying \ref{free}, then for every choice of $\Ep,\Es \subset \LLL$ we have $h(\EEE) \geq h(X) - \frac {4\log 2}k$.  This is despite the fact that $\LLL$ itself has specification and so we can take $\GGG=\LLL$ and $\Cp = \Cs=\emptyset$ in Theorem \ref{thm:get0spec}.
\end{remark}

Once we have obtained $\FFF$ satisfying \ref{free}, \ref{gap-2}, and \ref{overlaps}, the relationship between $\Sigma$ and $X$ is given by the following result; see Figure \ref{fig:structure} for the overall structure.


\begin{theorem}\label{thm:tower}\label{THM:TOWER}
Let $X$ be a two-sided shift space on a finite alphabet with language $\LLL$, and  let $\ph\in \Ch(X)$.
Suppose that $\FFF \subset \LLL$
satisfies \ref{free} and let $I= \FFF\setminus\FFF\FFF$.
Let $\Sigma,\pi$ be as in \eqref{eqn:I}--\eqref{eqn:transitions}.
\begin{enumerate}[leftmargin=*, widest=G,
 label=\textup{$\langle$\Alph{*}$\rangle$}]
\item\label{overlaps-injective}
If $\FFF$ satisfies \ref{overlaps} then $\pi$ is 1-1 on $\Sigma$.
\item\label{decipherable-periodics}\label{injective-decipherable}
$I$ is uniquely decipherable if and only if $\pi$ is 1-1 on 
$\Per(\Sigma,a)$ for every $a\in A_I$.
\item\label{decipherable-spr}
If $I$ is uniquely decipherable and $\FFF$ satisfies \ref{gap-2}, then 
$\Phi = \ph\circ \pi \in \Ch(\Sigma)$ has $P_G(\Phi) = P(\ph)$ and is strongly positive recurrent.
\item\label{charges}
If $\FFF$ satisfies \ref{gap-2}, then there is $P' < P(\ph)$ such that every ergodic $\mu\in \Ms(X)$ with $h(\mu) + \int\ph\,d\mu > P'$ satisfies $\mu(\pi(\Sigma))=1$.%
\footnote{If $\pi$ is 1-1, this immediately implies the liftability condition $\mu = \pi_* \nu$ by taking $\nu := (\pi^{-1})_* \mu$.  Without injectivity the situation is more delicate.}
\item\label{decipherable-finite}
If $I$ is uniquely decipherable and $\FFF$ satisfies \ref{gap-2}, then there is $P' < P(\ph)$ such that every ergodic $\mu\in \Ms(X)$ with $h(\mu) + \int\ph\,d\mu > P'$ has $\#\pi^{-1}(x)<\infty$  $\mu$-a.e.
\item\label{finite-lifts}
If $\mu$ is a $\sigma$-invariant probability measure on $\pi(\Sigma)$ 
and $\mu$-a.e.\ $x$ has $\#\pi^{-1}(x) < \infty$, then there is a $T$-invariant measure $\nu$ on $\Sigma$ such that $\mu = \pi_*\nu$ and $h(\mu) = h(\nu)$.\footnote{This differs from the rest of the statements in the theorem in that it is simply a special case of a standard result on finite-to-one measurable factor maps; see Lemma \ref{lem:finite-lifts}.}
\item\label{D-gap} If $I$ is a generating set for $X$ and $\DDD = \DDD(I) := \{w\in \LLL \mid uwv \in I$ for some $u,v\in \LLL\}$ has the property that $P(\DDD(I),\ph) < P(\ph)$, then $\FFF=I^*$ satisfies \ref{gap-2}.
\item\label{gap-gibbs}
If $\FFF$ satisfies \ref{gap-2}, then there is at least one equilibrium state $\mu$ that has the Gibbs property for $\ph$ on $\FFF$.
\end{enumerate}
\end{theorem}


Theorem \ref{thm:tower} is proved in \S\ref{sec:towers2} and is
illustrated by Figure \ref{fig:structure}. The properties involving $\mu$ in the last row of that figure are to hold for every ergodic $\mu$ with $h(\mu) + \int\ph\,d\mu>P'$, where $P'$ is some threshold $<P(\ph)$; in particular, they hold for every equilibrium state (since every ergodic component of an equilibrium state is itself an equilibrium state).\footnote{In fact we will eventually prove \emph{a posteriori} that the equilibrium state is unique, but this must wait until Theorem \ref{thm:spr}.}
The labels on the arrows indicate which part of Theorem \ref{thm:tower} establishes the corresponding implication.  Two arrows leading to a single property indicate an ``and'' relationship: both of the corresponding hypotheses are required to deduce the conclusion.

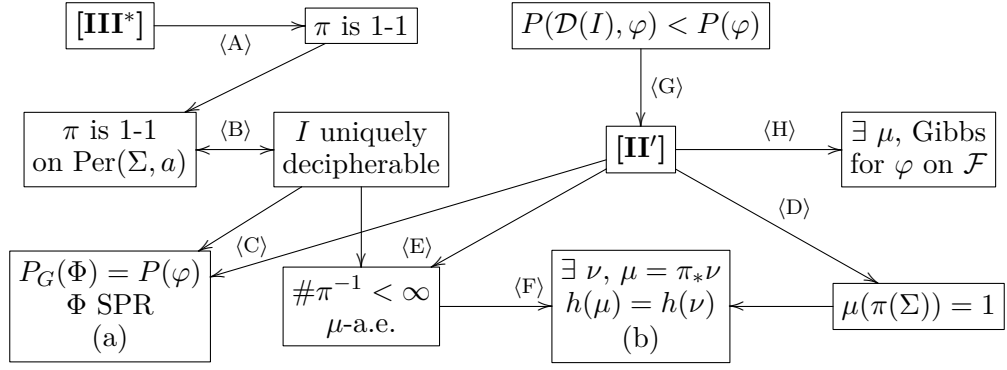
\begin{figure}[htbp]
\begin{gather*}
\xymatrix{
*+[F]{\ref{overlaps}}\ar_{\ref{overlaps-injective}}[r]
& *+[F]{\txt{$\pi$ is 1-1}}
\ar[dl]
& *+[F]{
\iftoggle{arxiv}{P(\DDD(I),\ph) < P(\ph)}{\txt{$P(\DDD(I),\ph)$ \\ $ < P(\ph)$}}
}\ar^{\ref{D-gap}}[d]
& \\
*+[F]{\txt{$\pi$ is 1-1\\ on $\Per(\Sigma,a)$}}
& *+[F]{\txt{$I$ uniquely\\ decipherable}}\ar_{\ref{decipherable-periodics}}@{<->}[l]
\ar^{\ref{decipherable-spr}}[dl]
\ar[d]
& *+[F]{\ref{gap-2}}\ar^{\ref{gap-gibbs}}[r]
\ar[dll]
\ar_(.75){\ref{decipherable-finite}}[dl]
\ar^{\ref{charges}}[dr]
& *+[F]{\txt{$\exists$ $\mu$, Gibbs \\ for $\ph$ on $\FFF$}} \\
*+[F]{\txt{$P_G(\Phi)=P(\ph)$ \\ $\Phi$ SPR \\ \ref{spr}}}
& *+[F]{\txt{$\#\pi^{-1}<\infty$ \\ $\mu$-a.e.}}\ar^(.6){\ref{finite-lifts}}[r]
& *+[F]{\txt{$\exists$ $\nu$, $\mu = \pi_* \nu$ \\ $h(\mu) = h(\nu)$ \\ \ref{lifts}}}
& *+[F]{\mu(\pi(\Sigma))=1} \ar[l]
}
\end{gather*}
\caption{The logical structure of Theorem \ref{thm:tower}, with conclusions \ref{spr} and \ref{lifts} of Theorem \ref{thm:structure} marked.}
\label{fig:structure}
\end{figure}


\begin{corollary}\label{cor:2-sided}
Let $X$ be a two-sided shift space on a finite alphabet, and let $\ph\in\Ch(X)$.  Suppose $\FFF\subset \LLL(X)$ satisfies \ref{free}, \ref{gap-2}, and \ref{overlaps}, and let $\Sigma$ be the two-sided countable-state Markov shift given by \eqref{eqn:I}--\eqref{eqn:transitions}.  Then the 1-block code $\pi\colon \Sigma\to X$ defined by $(w,k)\mapsto w_k$  is 1-1, and conclusions \ref{spr}--\ref{lifts} of Theorem \ref{thm:structure} hold.  Moreover, $(X,\sigma,\ph)$ has at least one equilibrium state satisfying the Gibbs property for $\ph$ on $\FFF$.
\end{corollary}
\begin{proof}
Follows from parts \ref{overlaps-injective}, \ref{injective-decipherable}, \ref{decipherable-spr}, \ref{charges}, and \ref{gap-gibbs} of Theorem \ref{thm:tower}.
\iftoggle{arxiv}{}{\qed}
\end{proof}

We remark that as in Theorem \ref{thm:tower}, the equilibrium state in Corollaries \ref{cor:2-sided}, \ref{cor:coded}, and \ref{cor:1-sided} is in fact unique, but we need Theorem \ref{thm:spr} to prove this.

\begin{corollary}\label{cor:coded}
Let $X$ be a coded shift, and suppose $I$ is a uniquely decipherable generating set with $P(\DDD(I),\ph) < P(\ph)$ for some $\ph\in \Ch(X)$.  Let $\Sigma$ be the two-sided countable-state Markov shift defined in \eqref{eqn:B}--\eqref{eqn:transitions}, and $\pi\colon \Sigma\to X$ the 1-block code defined there.
Then there is $P' < P(\ph)$ such that if $\mu\in \MMM_\sigma(X)$ is ergodic and has $h(\mu) + \int\ph\,d\mu > P'$, then $\mu$-a.e.\ $x\in X$ has $\#\pi^{-1}(x) < \infty$.  Moreover,
$\pi$ is 1-1 on $\Per(\Sigma,a)$ for every state $a$ of $\Sigma$;
conclusions \ref{spr}--\ref{lifts} of Theorem \ref{thm:structure} hold; and $(X,\sigma,\ph)$ has at least one equilibrium state satisfying the Gibbs property for $\ph$ on $\FFF=I^*$.
\end{corollary}
\begin{proof}
Follows from parts \ref{D-gap}, \ref{decipherable-periodics}, \ref{decipherable-spr}, \ref{charges}, \ref{decipherable-finite}, \ref{finite-lifts}, and \ref{gap-gibbs} of Theorem \ref{thm:tower}.
\iftoggle{arxiv}{}{\qed}
\end{proof}

The following example shows that without condition \ref{overlaps} or unique decipherability, the coding map $\pi$ can fail to relate equilibrium states on $\Sigma$ to equilibrium states on $X$.

\begin{example}\label{eg:not1-1}
Let $X\subset \{0,1\}^\ZZ$ be the SFT defined by forbidding the word $111$, and let $\FFF = \{0\} \cup \{w\in \LLL_{\geq 2} : w$ neither starts nor ends with the word $11\}$.  Then $\FFF$ satisfies \ref{free}, we get $I = I(\FFF) = \{0,01,10,101\}$ as the irreducible elements of $\FFF$, and 
$\DDD(I)$ is finite so $P(\DDD(I),\ph) < P(\ph)$ for all $\ph\in \Ch(X)$.
  However, $010 = (01)(0) = (0)(10) \in \FFF$ has two different `factorisations'; in particular,    $\pi^{-1}(x)$ is uncountable whenever $x$ contains infinitely many copies of the word $010$.
It is not hard to show that \ref{overlaps} fails and $h(\Sigma) > h(X)$.
\end{example}

For one-sided shift spaces, it turns out that injectivity (even on a set of full measure) is too much to ask for.

\begin{example}\label{eg:1-sided-inj}
Let $X\subset \{0,1\}^\NN$ be the one-sided SFT defined by forbidding the word $11$, and let $\mathcal{F} = \{w\in \LLL(X) : w_{|w|} \neq 1\}$.  One can easily check that $I = \{0, 10\}$ and conditions \ref{free}, \ref{gap-2}, and \ref{overlaps} are satisfied, but every $x\in X$ starting with the symbol $0$ has two preimages under $\pi$.
\end{example}

Consequently, in the following version of Corollary \ref{cor:2-sided} for one-sided shifts, we can only deduce the weaker version of injectivity given in \eqref{eqn:inj}.

\begin{corollary}\label{cor:1-sided}
Let $X$ be a one-sided shift space on a finite alphabet, and let $\ph\in\Ch(X)$.  Suppose $\FFF\subset \LLL(X)$ satisfies \ref{free}, \ref{gap-2}, and \ref{overlaps}.  Let $\Sigma \subset A_I^{\NN\cup \{0\}}$ be the one-sided countable-state Markov shift given by \eqref{eqn:I}--\eqref{eqn:transitions}, and $\pi$ the 1-block code there.  Then \eqref{eqn:inj} holds (and thus
the map $\pi$ is 1-1 on $\Per(\Sigma) := \bigcup_{n\in \NN} \Per_n(\Sigma)$);
conclusions \ref{spr}--\ref{lifts} of Theorem \ref{thm:structure} hold; and $(X,\sigma,\ph)$ has at least one equilibrium state satisfying the Gibbs property for $\ph$ on $\FFF$.
\end{corollary}

Corollary \ref{cor:1-sided} is proved in \S\ref{sec:1-sided-corollary}.  Now Theorem \ref{thm:structure} follows from Theorem \ref{thm:get0spec} and Corollaries \ref{cor:2-sided} and \ref{cor:1-sided}.  Theorem \ref{thm:spr-a} is an immediate corollary of the following result, which differs from it only by clarifying that an even weaker injectivity property\footnote{Injectivity on $\Sigma$ implies \eqref{eqn:inj}, which in turn implies injectivity on $\Per_n(\Sigma,a)$ for every $a,n$.} than \eqref{eqn:inj} suffices to deduce everything except the precise value of the period; this allows it to be used with Corollary \ref{cor:coded} to prove Theorem \ref{thm:coded}.


\begin{theorem}\label{thm:spr}
Let $(X,\sigma)$ be a one- or two-sided shift space on a finite alphabet, and let $\ph\in \Ch(X)$.  Suppose that there is a topologically transitive countable-state Markov shift $(\Sigma,T)$ and a 1-block code $\pi\colon \Sigma\to X$ 
such that there is a state $a$ for $\Sigma$ such that $\pi$ is 1-1 on $\Per_n(\Sigma,a)$ for all $n\in \NN$, and in addition
properties \ref{spr}--\ref{lifts} from Theorem \ref{thm:structure} hold.
Then $P_G(\ph\circ \pi) = P(\ph)$, and $(X,\ph)$ has a unique equilibrium state $\mu$, which satisfies conclusions \ref{periodic}--\ref{analytic} of Theorem \ref{thm:main}.  The period $d$ in \ref{bernoulli} is a factor of $p = \gcd \{k : \Per_k(\Sigma)\neq\emptyset\}$, and $d=p$ if \eqref{eqn:inj} holds.\footnote{In particular, $d=p$ if $\pi$ is injective on $\Sigma$.}
In the case when $X$ is two-sided, $(X,\ph,\mu)$ is Bernoulli up to the period $d$.
\end{theorem}

Theorem \ref{thm:spr} is proved in \S\ref{sec:spr-statistics} 
using the existing literature on countable-state Markov shifts.  We use the results formulated by Cyr and Sarig \cite{CS09}, but the work of Young \cite{lY98,lY99} should certainly be mentioned here as well.





\section{Preparation for the proofs}\label{sec:prep}



\subsection{Ergodic theorems}\label{sec:strong-ergodic}

In the proof of Theorem \ref{thm:tower}, we will need mild strengthenings of the Birkhoff and Shannon--McMillan--Breiman ergodic theorems.  These are general results that hold beyond the setting of this paper.  Throughout this section, $(X,T,\mu)$ is any invertible ergodic measure-preserving transformation.

The Birkhoff ergodic theorem \cite[Theorem 2.2.3]{kP89} 
says that if
$f\colon X\to \RR$ is an $L^1$ function, then for $\mu$-a.e.\ $x\in X$ and every $\eps>0$ there is $N=N(x,\eps)$ such that for all $n\geq N$ we have
$
|\frac 1n S_nf(x) - \int f\,d\mu| < \eps.
$
We start with an elementary lemma.

\begin{lemma}\label{lem:f-grows-slowly}
For all $(X,T,\mu)$ as above, all $f\in L^1(X,\mu)$, and $\mu$-a.e.\ $x$, 
the function $\tilde f_n(x) = \max\{\lvert f(T^kx)\rvert \mid -n\leq k \leq n \}$ satisfies $\lim_{n\to\infty} \frac 1n \tilde f_n(x) = 0$.
\end{lemma} 
\begin{proof}
By Birkhoff's ergodic theorem, $\mu$-a.e.\ $x$ has the property that $\frac 1n \sum_{k=0}^{n-1} f(T^kx)$ and $\frac 1n \sum_{k=0}^{n-1} f(T^{-k}x)$ both converge; each such $x$ has $\frac 1n f(T^nx) \to 0$ and $\frac 1n f(T^{-n}x)\to 0$.
Let $k_n\in [-n,n]$ be such that $\tilde f_n(x) = f(T^{k_n}x)$.  If there is $K\in \NN$ such that $|k_n| \leq K$ for all $n$, then $\frac 1n |\tilde f_n(x)| \leq \frac 1n |\tilde f_K(x)| \to 0$.  If there is no such $K$, then $|k_n|\to \infty$ since it is nondecreasing, and since $|k_n|\leq n$ we get $\frac 1n |\tilde f_n(x)| \leq |f(T^{k_n}x) / k_n| \to 0$.
\iftoggle{arxiv}{}{\qed}
\end{proof}

\begin{theorem}\label{thm:strong-birkhoff}
If $(X,T,\mu)$ is an invertible ergodic measure-preserving transformation and $f\colon X\to \RR$ is an $L^1$ function, then for $\mu$-a.e.\ $x\in X$ and every $\eps>0$ there is $N=N(x,\eps)$ such that for all $n\geq N$ and  $\ell\in [0,n]$ we have
$|\frac 1n S_nf(T^{-\ell}x) - \int f\,d\mu| < \eps$.
\end{theorem}
\begin{proof}
Without loss of generality assume that $\int f\,d\mu=0$ (else replace $f$ by $f-\int f\,d\mu$). Applying Birkhoff's theorem to $T$ and $T^{-1}$, for every $0<\eps<1$ and $\mu$-a.e.\ $x\in X$ there is $N_0\in \NN$ such that for all $n\geq N_0$ we have $\abs{S_nf(x)} < n\eps/2$ and $\abs{S_n(f^{-n}x)} < n\eps/2$. Let
$M = \max(1, \tilde f_{N_0}(x))$ and $N=2MN_0/\eps$.
Given $n\geq N$ and $\ell\in [0,n]$, we are in one of the following three cases: (i) $\ell,n-\ell \geq N_0$; (ii) $\ell<N_0$ and $n-\ell\geq N-N_0 \geq MN_0/\eps \geq N_0$; (iii) $n-\ell < N_0$ and $\ell \geq MN_0/\eps\geq N_0$.  In the first case our choice of $N_0$ gives
\[
|S_n f(T^{-\ell} x)| \leq |S_\ell f(T^{-\ell}x)| + |S_{n-\ell} f(x)|
 < \ell\eps/2 + (n-\ell)\eps/2 = n\eps/2 < n\eps.
\]
In the second case our definition of $M$ and $N$ gives $N_0 M = N\eps/2 \leq n\eps/2$, so
\[
|S_n f(T^{-\ell} x)| \leq |S_\ell f(T^{-\ell} x)| + |S_{n-\ell} f(x)|
< N_0 M + (n-\ell) \eps/2 \leq n\eps,
\]
and the third case is similar, which proves the theorem.
\iftoggle{arxiv}{}{\qed}
\end{proof}

Given 
a countable (or finite) measurable partition $\alpha$ of $X$, write $\alpha(x)$ for the partition element containing $x$, and for integers $i<j$, write $\alpha_i^j = \bigvee_{k=i}^{j-1} T^{-k} \alpha$.  Recall that
\[
H_\mu(\alpha) := \sum_{A\in \alpha} -\mu(A) \log \mu(A), 
\qquad
h_\mu(\alpha,T) := \lim_{n\to\infty} \frac 1n H_\mu(\alpha_0^n).
\]
The Shannon--McMillan--Breiman theorem \cite[Theorem 6.2.3]{kP89} says that if $H_\mu(\alpha) < \infty$, then for $\mu$-a.e.\ $x\in X$ and every $\eps>0$ there is $N=N(x,\eps)$ such that for all $n\geq N$ we have
$
|-\frac 1n \log \mu(\alpha_0^n(x)) - h_\mu(\alpha,T)| < \eps.
$

\begin{theorem}\label{thm:strong-smb}
If $(X,T,\mu)$ is an invertible ergodic measure-preserving transformation and $\alpha$ is a countable measurable partition with $H_\mu(\alpha)<\infty$, then for $\mu$-a.e.\ $x\in X$ and every $\eps>0$ there is $N=N(x,\eps)$ such that for all $n\geq N$ and $\ell\in [0,n]$ we have
\begin{equation}\label{eqn:strong-smb}
\abs{-\frac 1n \log \mu(\alpha_{-\ell}^{n-\ell}(x)) - h_\mu(\alpha,T)} < \eps.
\end{equation}
\end{theorem}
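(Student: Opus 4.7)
The plan is to reduce the statement to Theorem \ref{thm:strong-birkhoff} via Breiman's decomposition of the information function. First, by $T$-invariance of $\mu$ we have $\mu(\alpha_{-\ell}^{n-\ell}(x)) = \mu(\alpha_0^n(T^{-\ell}x))$, so \eqref{eqn:strong-smb} is equivalent to
\[
\abs{-\tfrac1n \log \mu(\alpha_0^n(T^{-\ell}x)) - h} < \eps
\qquad\text{uniformly in } \ell\in[0,n],
\]
where $h := h_\mu(\alpha,T)$. I would introduce the conditional informations
\[
J_m(z) := -\log\mu\bigl(\alpha(z) \,\big|\, \textstyle\bigvee_{k=1}^{m-1} T^{-k}\alpha\bigr)(z),
\qquad
J(z) := -\log\mu\bigl(\alpha(z) \,\big|\, \textstyle\bigvee_{k\geq 1} T^{-k}\alpha\bigr)(z).
\]
The reverse martingale convergence theorem gives $J_m \to J$ a.e.\ with $\int J\,d\mu = h$, while Chacon's maximal inequality shows $g := \sup_m J_m \in L^1(\mu)$. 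Iterating the chain rule $I(\alpha \vee \beta) = I(\beta) + I(\alpha \mid \beta)$ yields the standard identity $-\log\mu(\alpha_0^n(y)) = \sum_{j=0}^{n-1} J_{n-j}(T^j y)$; setting $y = T^{-\ell}x$ and reindexing with $i = j - \ell$ gives
\[
-\log \mu(\alpha_0^n(T^{-\ell}x)) = \underbrace{\sum_{i=-\ell}^{n-\ell-1} J(T^i x)}_{(\mathrm{A})} + \underbrace{\sum_{i=-\ell}^{n-\ell-1} (J_{n-\ell-i} - J)(T^i x)}_{(\mathrm{B})}.
\]

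Term (A) divided by $n$ is handled immediately: applying Theorem \ref{thm:strong-birkhoff} to $J \in L^1$ yields $\tfrac1n(\mathrm{A}) \to h$ uniformly in $\ell\in[0,n]$ for a.e.\ $x$. For (B), given $\eps>0$ I would set $h_M(z) := \sup_{m\geq M} |J_m(z)-J(z)|$; since $h_M\to 0$ a.e.\ and $h_M \leq 2g \in L^1$, dominated convergence lets me choose $M$ with $\int h_M\,d\mu < \eps$. Then I split (B) according to whether $m := n-\ell-i \geq M$ or $m < M$. The first piece is dominated by $\sum_{i=-\ell}^{n-\ell-M} h_M(T^i x)$, which by Theorem \ref{thm:strong-birkhoff} applied to $h_M \in L^1$ is at most $2\eps n$ uniformly in $\ell$ for $n$ large. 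The second piece consists of at most $M$ terms, each bounded by $2g(T^i x)$, so its contribution is at most $\tfrac{2M}{n} \max_{|j|\leq n} g(T^j x)$; since $g \in L^1$, Birkhoff's theorem implies $g(T^j x)/(|j|+1) \to 0$ a.e., hence $\max_{|j|\leq n} g(T^j x) = o(n)$, and this piece vanishes as $n\to\infty$.

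The main obstacle is uniformity of the error term (B) in $\ell$, since the summand $J_{n-\ell-i}$ genuinely depends on the window position through $m = n-\ell-i$. The resolution is the observation that in any window of length $n$ the tail index $m$ lies below the threshold $M$ for at most $M$ indices, where the crude $L^1$-bound on $g$ suffices, while on the remaining indices the summand is dominated by $h_M$ and controlled via the already-established strong Birkhoff theorem. Combining the uniform-in-$\ell$ estimates for (A) and the two pieces of (B) produces an $N(x,\eps)$ that works simultaneously for all $\ell\in[0,n]$, completing the proof.
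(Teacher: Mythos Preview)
Your proposal is correct and follows essentially the same approach as the paper: both use the Breiman decomposition $-\log\mu(\alpha_0^n(y)) = \sum_{j=0}^{n-1} J_{n-j}(T^j y)$ with $y=T^{-\ell}x$, apply Theorem~\ref{thm:strong-birkhoff} to the limiting information function $J$ for the main term, and handle the error term by splitting at a threshold $M$, bounding the bulk by $h_M=\sup_{m\ge M}|J_m-J|$ (controlled again via Theorem~\ref{thm:strong-birkhoff} and dominated convergence) and the remaining $M$ terms by the $L^1$ maximal function $g=\sup_m J_m$ together with the fact that $\max_{|j|\le n} g(T^j x) = o(n)$. The paper packages this last step as Lemma~\ref{lem:f-grows-slowly}, but the argument is the same as yours.
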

\begin{proof}
We adapt the standard argument to get the Shannon--McMillan--Breiman theorem from Birkhoff's theorem \cite[Theorem 6.2.3]{kP89}, using Theorem \ref{thm:strong-birkhoff} where necessary.

Let $I_n(x) = -\log \mu(\alpha_0^n(x))$ and $f_n(x) = I_n(x) - I_{n-1}(Tx)= -\log \big( \frac{\mu(\alpha_0^n(x)) }{ \mu(\alpha_1^n(x))} \big) $.  Note that $I_n, f_n \geq 0$.  Then  $f^* := \sup_{n\geq 1} f_n \in L^1$ \cite[Corollary 6.2.2]{kP89}, and as shown in the proof of \cite[Theorem 6.2.3]{kP89}, we have $f_n \to f\geq 0$ both pointwise a.e.\ and in $L^1$, where $f$ is an $L^1$ function such that $\int f\,d\mu = h_\mu(\alpha, T)$.
Moreover, iterating the relation $I_n = f_n + I_{n-1} \circ T$ gives $I_n =f_n + f_{n-1} \circ T + I_{n-2} \circ T^2 = \cdots = \sum_{k=0}^{n-1} f_{n-k} \circ T^k$, and so
\begin{equation}\label{eqn:Infn}
-\frac 1n \log \mu(\alpha_{-\ell}^{n-\ell}(x)) = 
\frac 1n I_n(T^{-\ell}x) = \frac 1n S_n f(T^{-\ell}x) + \frac 1n \sum_{k=0}^{n-1} (f_{n-k} - f)(T^{k-\ell} x).
\end{equation}
By Theorem \ref{thm:strong-birkhoff}, for $\mu$-a.e.\ $x$ and every $\eps>0$ there is $N$ such that for all $n\geq N$, we have
\[
\abs{\frac 1n S_n f(T^{-\ell}x) - h_\mu(\alpha,T)} =  \abs{\frac 1n S_n f(T^{-\ell}x) - \int f\,d\mu} \leq \frac \eps2.
\]
Thus to prove \eqref{eqn:strong-smb} it suffices to find $N'$ such that for all $n\geq N'$ and $0\leq \ell\leq n$, we have
\begin{equation}\label{eqn:sumfnk}
\frac 1n \sum_{k=0}^{n-1} \abs{(f_{n-k} - f)(T^{k-\ell} x)} < \frac\eps 2.
\end{equation}
Given $m\in \NN$, let $F_m = \sup_{j\geq m} \abs{f_j - f}$.  Then $0\leq F_m \leq f^* + f\in L^1$, and $F_m\to 0$ pointwise a.e., so by the dominated convergence theorem there is $m$ such that $0\leq \int F_m \,d\mu < \eps/4$.  We control the part of the sum in \eqref{eqn:sumfnk} with $k\leq n-m$ by applying Theorem \ref{thm:strong-birkhoff} to $F_m$, obtaining $N_1\in \NN$ such that for all $n\geq N_1$ and $\ell\in [0,n]$ we have
\begin{equation}\label{eqn:small-k}
\frac 1n \sum_{k=0}^{n-m} \abs{(f_{n-k} - f)(T^{k-\ell} x)} \leq
\frac 1n \sum_{k=0}^{n-m} F_m(T^{k-\ell}x) < \frac\eps 4.
\end{equation}
To control the part of the sum with $k>n-m$, note that $|f_{n-k} - f| \leq g := f^* + f \in L^1$ for all $k$, so by Lemma \ref{lem:f-grows-slowly} we have
\[
\frac 1n \sum_{k=n-m+1}^{n-1}\abs{(f_{n-k} - f)(T^{k-\ell} x)} \leq \frac 1n \sum_{k=n-m+1}^{n-1} (f^*+f)(T^{k-\ell}x) 
\leq \frac mn \tilde g_n(x) \to 0
\]
as $n\to \infty$ since $m$ is fixed.  Choosing $N_2\in \NN$ such that $\frac mn\tilde g_n(x) < \eps/4$ for all $n\geq N_2$, we can add this to \eqref{eqn:small-k} and obtain \eqref{eqn:sumfnk} for all $n\geq N' := \max(N_1,N_2)$ and $0\leq\ell\leq n$.
\iftoggle{arxiv}{}{\qed}
\end{proof}

\subsection{Bounded distortion and counting estimates}\label{sec:bdd-dist}\label{sec:counting}

Now we return to the setting of symbolic dynamics.  Let $X$ be a one- or two-sided shift space, and $\LLL$ its language.
Given $\beta>0$ and $\ph\in \Cb(X)$, we see that for every $w\in \LLL_n$ and every $x,y\in [w]$, \eqref{eqn:Holder} yields
$\abs{\ph(\sigma^kx)-\ph(\sigma^ky)} \leq \abs{\ph}_\beta e^{-\beta \min(k,n-k)}$ for all $0\leq k < n$,
so that in particular
\begin{equation}\label{eqn:bdd-distortion}
\abs{S_n\ph(x)-S_n\ph(y)} \leq 2\abs{\ph}_\beta \sum_{j=0}^\infty e^{-\beta j} =: \absd{\ph} < \infty.
\end{equation}
This can be thought of as a bounded distortion condition, and we think of $\absd{\ph}$ as the bound on distortion of $S_n\ph$ within an $n$-cylinder.

It follows from \eqref{eqn:bdd-distortion} that for every $v,w \in \LLL$ such that $vw\in \LLL$, we have
\begin{equation}\label{eqn:phvw}
\hat\ph(v) + \hat\ph(w) - \absd{\ph} \leq \hat\ph(vw) \leq \hat\ph(v) + \hat\ph(w).
\end{equation}
(The upper bound is immediate from \eqref{eqn:Phi}.)

In the proofs of both Theorems \ref{thm:get0spec} and \ref{thm:tower}, we will need various estimates on partition sums over $\LLL$ and over $\GGG$.  We start with the general observation that given $\CCC,\DDD\subset \LLL$ and $m,n\in \NN$, the bound $\hat\ph(uv) \leq \hat\ph(u)+\hat\ph(v)$ gives
\begin{equation}\label{eqn:CD}
\Lambda_{m+n}(\CCC_m \DDD_n \cap \LLL,\ph)
\leq \sum_{u\in \CCC_m} \sum_{v\in \DDD_n} e^{\hat\ph(u)} e^{\hat\ph(v)} = \Lambda_m(\CCC,\ph) \Lambda_n(\DDD,\ph);
\end{equation}
this will be used in several places, as will the following consequence.

\begin{lemma}\label{lem:CD}
Given $\CCC,\DDD\subset \LLL$, we have $P(\CCC\DDD \cap \LLL,\ph) \leq \max\{P(\CCC,\ph), P(\DDD,\ph)\}$.  Thus if $\GGG$ satisfies \ref{gap} then $P(\GGG,\ph) = P(\ph)$, and if $\FFF$ satisfies \ref{gap-2} then $P(\FFF,\ph)=P(\ph)$.
\end{lemma}
\begin{proof}
Let $P' > \max\{P(\CCC,\ph),P(\DDD,\ph)\}$ be arbitrary; then there is $C>0$ such that $\Lambda_k(\CCC,\ph) \leq C e^{kP'}$ and $\Lambda_\ell(\DDD,\ph) \leq C e^{\ell P'}$ for all $k,\ell\in \NN$, and \eqref{eqn:CD} gives
\[
\Lambda_n(\CCC\DDD \cap \LLL,\ph) \leq \sum_{k=0}^n \Lambda_k(\CCC,\ph) \Lambda_{n-k}(\DDD,\ph) \leq \sum_{k=0}^n (C e^{kP'}) (C e^{(n-k)P'}) = (n+1)C^2 e^{nP'},
\]
so $P(\CCC\DDD\cap\LLL,\ph) \leq P'$.  Sending $P' \searrow \max\{P(\CCC,\ph),P(\DDD,\ph)\}$ completes the proof.
\iftoggle{arxiv}{}{\qed}
\end{proof}

The following are similar to estimates appearing in \cite[Lemmas 5.1--5.4]{CT1} and \cite[Section 5]{CT2}.  The chief difference here is that we may have $\LLL \setminus \Cp \GGG \Cs \neq \emptyset$, but because the pressure of this collection is controlled, we get the same results.

\begin{lemma}\label{lem:counting}
Let $X$ be a shift space on a finite alphabet and $\ph\in \Ch(X)$.  Let $\GGG\subset \LLL(X)$ be such that \ref{spec} and \ref{gap} hold.  Then there is $Q_2>0$ such that for every $n$ we have
\begin{equation}
\label{eqn:countingL}
e^{nP(\ph)} \leq \Lambda_n(\ph) \leq Q_2 e^{nP(\ph)}.
\end{equation}
Furthermore, there are $Q_3>0$ and $N,n_0\in \NN$ such that for every $n\geq n_0$ there is $j\in (n-N,n]$ with
\begin{equation}\label{eqn:countingG2}
\Lambda_j(\GGG,\ph) \geq Q_3 e^{jP(\ph)}.
\end{equation}
\end{lemma} 
\begin{proof}
For the first inequality in \eqref{eqn:countingL}, we start by observing that $\LLL_{kn} \subset \LLL_n \LLL_n \cdots \LLL_n$ ($k$ times), and so by iterating \eqref{eqn:CD} we get
\begin{equation}\label{eqn:submult}
\Lambda_{kn}(\ph) \leq \Lambda_n(\ph)^k,
\end{equation}
which yields
$
\frac 1{kn}\log\Lambda_{kn}(\ph) \leq \frac 1n \log\Lambda_n(\ph).
$
Sending $k\to\infty$ gives the first half of \eqref{eqn:countingL}.  Next we use 
\ref{spec} and \eqref{eqn:bdd-distortion} to prove that there is $C>0$, independent of $n$, such that
\begin{equation}
\label{eqn:countingG}
\Lambda_n(\GGG,\ph) \leq C e^{nP(\ph)}
\end{equation}
for every $n$; then we use \eqref{eqn:countingG} and  \ref{gap} to prove the second half of \eqref{eqn:countingL}.

By \ref{spec} there is a map $\pi\colon \GGG_m \times \GGG_n \to \GGG$ given by $\pi(v,w) = vuw$, where $u\in \LLL$ depends on $v,w$ but always satisfies $\abs{u}\leq \tau$.  Iterating and abusing notation slightly gives a map $\pi\colon (\GGG_n)^k \to \GGG$ of the form $\pi(v^1,\dots, v^k) = v^1 u^1 v^2 \cdots u^{k-1} v^k$.  This map may not be injective but we can control its multiplicity by observing that each $u^i$ has length in $\{0,\dots, \tau\}$, and so $\#\pi^{-1}(w) \leq (\tau+1)^{k-1}$.  Truncating $\pi(w)$ to the first $kn$ symbols gives a map $\hat\pi\colon (\GGG_n)^k \to \LLL_{nk}$; because we delete at most $k\tau$ symbols, the multiplicity of this truncation map is $\leq (\#A+1)^{k\tau}$.  Thus for each $w\in \LLL_{nk}$ we have $\#\hat\pi^{-1}(w)\leq (\tau+1)^{k-1} (\#A+1)^{k\tau}$.

Furthermore, \eqref{eqn:phvw} yields
\[
\hat\ph(v^1 u^1 \cdots u^{k-1} v^k) \geq \hat\ph(v^1) + \cdots + \hat\ph(v^k) - k(\tau\|\ph\| + \absd{\ph}),
\]
and since truncation deletes at most $k\tau$ symbols, we have
\[
\hat\ph(\hat\pi(v^1,\dots,v^k)) \geq \hat\ph(v^1) + \cdots + \hat\ph(v^k)
- C' k
\]
for some $C'$ independent of $n,k, v^1,\dots, v^k$.  It follows that
\begin{align*}
\Lambda_{kn}(\ph) &\geq (\tau+1)^{-k}(\#A+1)^{-k\tau} e^{-C' k} \Lambda_n(\GGG,\ph)^k, \\
\frac 1{kn}\log\Lambda_{kn}(\ph) &\geq \frac 1n\log \Lambda_n(\GGG,\ph) - \frac 1n\big(C' + \log(\tau+1) + \tau\log (\#A+1)\big).
\end{align*}
Sending $k\to\infty$ gives \eqref{eqn:countingG}.

By condition \ref{gap} there is $\eps>0$ and $K > 0$ such that
\begin{align}
\label{eqn:LambdaCC}
\Lambda_n(\Cp \cup \Cs,\ph) &\leq K e^{n(P(\ph) - \eps)}, \\
\label{eqn:LambdaLC}
\Lambda_n(\LLL \setminus \Cp \GGG\Cs,\ph) &\leq K e^{n(P(\ph) - \eps)}
\end{align}
for all $n$.  From \eqref{eqn:LambdaLC} we get $\Lambda_n(\ph) \leq \Lambda_n(\Cp\GGG\Cs,\ph) + K e^{n(P(\ph) - \eps)}$, so it suffices to prove the upper bound in \eqref{eqn:countingL} for $\Lambda_n(\Cp\GGG\Cs,\ph)$.

Write $a_j = \Lambda_j(\GGG,\ph) e^{-jP(\ph)}$, and observe that $a_j\leq C$ by \eqref{eqn:countingG}.  Since every word $x\in (\Cp\GGG\Cs)_n$ can be decomposed as $x=uvw$ where $u\in \Cp$, $v\in \GGG$, and $w\in \Cs$, we have
\begin{equation}\label{eqn:ijk}
\begin{aligned}
\Lambda_n(&\Cp\GGG\Cs,\ph) \leq \sum_{i+j+k=n} \Lambda_i(\Cp,\ph) \Lambda_j(\GGG,\ph) \Lambda_k(\Cs,\ph) \\
&\leq K^2 \sum_{i+j+k=n} e^{i(P(\ph) - \eps)} a_j e^{jP(\ph)} e^{k(P(\ph) - \eps)} \\
&= K^2 e^{nP(\ph)} \sum_{i+j+k=n} a_j e^{-(i+k)\eps}
= K^2 e^{nP(\ph)} \sum_{m=0}^n a_{n-m} (m+1)e^{-m\eps}.
\end{aligned}
\end{equation}
Because $a_{n-m} \leq C$ and $\sum_{m\geq 0} (m+1)e^{-m\eps} < \infty$, this  proves the second half of \eqref{eqn:countingL}.

Finally, we use \eqref{eqn:LambdaLC} and \eqref{eqn:ijk} to show \eqref{eqn:countingG2}.  Note that it suffices to produce $j\in (n-N,n]$ with $a_j \geq Q_3$.  Using \eqref{eqn:LambdaLC}, \eqref{eqn:ijk} and the first half of \eqref{eqn:countingL}, we have
\[
e^{nP(\ph)} - K e^{n(P(\ph) - \eps)} \leq \Lambda_n(\Cp\GGG\Cs,\ph)
\leq
K^2 e^{nP(\ph)}\sum_{m=0}^n a_{n-m} (m+1)e^{-m\eps},
\]
which yields
$
\frac 12 \leq 1 - K e^{-n\eps} \leq K^2 \sum_{m=0}^n a_{n-m}(m+1) e^{-m\eps}
$
whenever $n\geq n_0$, where $n_0$ is chosen such that $K e^{-n_0\eps} < \frac 12$.  Thus we can use the inequality $a_{n-m} \leq Q_2$ to get
\[
\frac 12 K^{-2} \leq \sum_{m=0}^{N-1} a_{n-m}(m+1)e^{-m\eps} + \sum_{m\geq N} Q_2 (m+1)e^{-m\eps}.
\]
Let $N$ be large enough that $K' := \frac 12 K^{-2} - Q_2 \sum_{m \geq N} (m+1)e^{-m\eps} > 0$, and let $Q_3>0$ be small enough that $(m+1)e^{-m\eps} \leq K'/(NQ_3)$ for all $m\geq 0$.  Then
\[
\frac {K'}{NQ_3}\sum_{m=0}^{N-1} a_{n-m} \geq \sum_{m=0}^{N-1} a_{n-m} (m+1)e^{-m\eps} \geq 
K'
\quad\Rightarrow\quad
\frac 1N\sum_{m=0}^{N-1} a_{n-m} \geq Q_3,
\]
so there is $0\leq m<N$ with $a_{n-m} \geq Q_3$, which completes the proof of Lemma \ref{lem:counting}.
\iftoggle{arxiv}{}{\qed}
\end{proof}

Lemma \ref{lem:counting} leads to the following bound.  Given $v\in \LLL_k$ and $1\leq i\leq n - k$, consider 
\begin{equation}\label{eqn:Hnvi}
\HHH_n(v,i) = \{w\in \LLL_n \mid w_{[i,i+k)} = v \},
\end{equation}
the set of words where $v$ appears starting in index $i$, but the entries of $w$ before $i$ and after $i+k$ are free to vary (this is the finite-length analogue of a cylinder set).  We will mostly be interested in the case when $v\in \GGG$.  
We will also need to consider
\begin{equation}\label{eqn:GH+}
\GGG^+ = \{w_{[1,i]} : w\in \GGG, 1\leq i\leq \abs{w}\}
\quad\text{and}\quad
\HHH^+(v) = \GGG^+ \cap v\LLL.
\end{equation}
We have the following non-stationary version of the Gibbs property (for the measure-theoretic equivalent, see \cite[\S 5.2]{CT2} and \S\ref{sec:gibbs}).

\begin{proposition}\label{prop:Gibbs}
If $\GGG$ satisfies \ref{spec} and \ref{gap}, then there is $Q_4>0$ such that for every $1\leq i \leq i+k \leq n$, we have
\begin{equation}\label{eqn:Gibbs}
\begin{alignedat}{3}
\Lambda_n(\HHH_n(v,i),\ph) &\leq Q_4 e^{(n-k)P(\ph) + \hat\ph(v)} &\text{ for every $v\in \LLL_k$},\\
\Lambda_n(\HHH_n(v,i),\ph) &\geq Q_4^{-1} e^{(n-k)P(\ph) + \hat\ph(v)} &\text{ for every $v\in \GGG_k$},
\end{alignedat}
\end{equation}
and similarly, for every $1\leq k\leq n$, we have
\begin{equation}\label{eqn:Gibbs-2}
\Lambda_n(\HHH^+(v),\ph) \geq Q_4^{-1} e^{(n-k)P(\ph) + \hat\ph(v)} \text{ for every $v\in \GGG^+_k$}.
\end{equation}
\end{proposition}
\begin{proof}
For the upper bound in \eqref{eqn:Gibbs}, observe that \eqref{eqn:CD} gives
\[
\Lambda_n(\HHH_n(v,i),\ph) \leq \Lambda_i(\LLL,\ph) e^{\hat\ph(v)} \Lambda_{n-(i+k)}(\LLL,\ph),
\]
and using \eqref{eqn:countingL} gives $\Lambda_n(\HHH_n(v,i),\ph) \leq (Q_2)^2 e^{(n-k)P(\ph)} e^{\hat\ph(v)}$.

\begin{figure}[htbp]
\includegraphics[width=200pt]{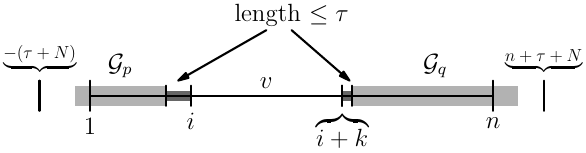}
\caption{Estimating $\Lambda_n(\HHH_n(v,i))$.}
\label{fig:gibbs}
\end{figure}

For the lower bound in \eqref{eqn:Gibbs}, we use the usual specification argument.  Let $N,n_0,Q_3$ be as in Lemma \ref{lem:counting}, and let $c = \max(n_0,\tau)$.  We start by considering the case when $i\geq c$ and $i+k \leq n - c$, illustrated in Figure \ref{fig:gibbs}.
By Lemma \ref{lem:counting}, for every such $i,k,n$, there are $p,q\in \NN$ such that
\begin{equation}\label{eqn:Lpq}
\begin{aligned}
\Lambda_p(\GGG,\ph)&\geq Q_3 e^{p P(\ph)} \text{ and } p \in [ i, i +N], \\
\Lambda_q(\GGG,\ph)&\geq Q_3 e^{q P(\ph)} \text{ and } i + k + q \in [n,n+N].
\end{aligned}
\end{equation}
Let $w^1\in \GGG_p$ and $w^2\in \GGG_q$ be arbitrary.  Then by \ref{spec} there are $u^1,u^2\in \LLL$ with $\abs{u^j}\leq \tau$ such that $w^1u^1vu^2w^2\in \GGG$.  Note that $\abs{w^1u^1} \in [i,i+N+\tau]$, and so by truncating at most $\tau+N$ symbols from the beginning and end of $w^1u^1vu^2w^2$, we obtain a word 
$T(w^1,w^2)\in \HHH_n(v,i)$ with the property that the first $i-\abs{u^1}$ symbols of $T(w^1,w^2)$ match the last $i-\abs{u^1}$ symbols of $w^1$, and similarly for the end of $T(w^1,w^2)$ and the beginning of $w^2$.

This defines a map $T\colon \GGG_p\times \GGG_q \to \HHH_n(v,i)$.  Note that
\begin{equation}\label{eqn:T-dist}
\begin{aligned}
\hat\ph(T(w^1,w^2)) &\geq \hat\ph(w^1 u^1 v u^2 w^2) - (2N+2\tau)\|\ph\| \\
&\geq \hat\ph(w^1) + \hat\ph(v) + \hat\ph(w^2) - (2N+4\tau)\|\ph\| - 2\absd{\ph},
\end{aligned}
\end{equation}
Moreover, since $\abs{u^1},\abs{u^2} \in \{0,\dots, \tau\}$ and  truncation removes at most $2(N+\tau)$ symbols from $w^1u^1vu^2w^2$,  each word in $\HHH_n(v,i)$ has at most $(\tau+1)^2(\#A+1)^{2(N+\tau)}$ preimages under the map $T$.  Thus there is $C>0$, independent of $n,v,i$, such that
\begin{align*}
\Lambda_n(\HHH_n(v,i)) &\geq (\tau+1)^{-2}(\#A+1)^{-2(N+\tau)} \sum_{w^1\in \GGG_p} \sum_{w^2 \in \GGG_q} e^{\hat\ph(T(w^1,w^2))} \\
&\geq C \sum_{w^1\in \GGG_p} \sum_{w^2 \in \GGG_q} e^{\hat\ph(w^1)}e^{\hat\ph(v)}e^{\hat\ph(w^2)} 
\geq C (Q_3)^2 e^{\hat\ph(v)} e^{-(n-k)P(\ph)},
\end{align*}
where the first inequality uses the multiplicity bound, the second uses \eqref{eqn:T-dist}, and the third uses \eqref{eqn:Lpq}.  This proves the lower bound in \eqref{eqn:Gibbs} as long as $i\geq c$ and $i+k\leq n-c$.  If $i<c$ then we dispense with $\GGG_p$ and define $T\colon \GGG_q \to \HHH_n(v,i)$ by $T(w) = (u^1 v u^2 w)_{[1,n]}$, where $u^2$ comes from \ref{spec} and $u^1\in \LLL_{i-1}$ is any word such that $u^1vu^2w\in \LLL$.  The rest of the proof proceeds as above, and the case $i+k > n-c$ is similar.  Finally, the lower bound in \eqref{eqn:Gibbs-2} for $\Lambda_n(\HHH^+(v),\ph)$ proceeds as in the case $i=1$, taking $u^1$ to be the empty word and noting that $T(w) = (vu^2w)_{[1,n]}\in \GGG^+$ since $vu^2w\in \GGG$ by \ref{spec}, and hence $T(w) \in \HHH^+(v)$.
\iftoggle{arxiv}{}{\qed}
\end{proof}

We need one more counting estimate that we will use in the proof of Theorem \ref{thm:tower}, which strengthens \eqref{eqn:countingG2} when \ref{spec} is replaced with \ref{free}.

\begin{lemma}\label{lem:gcd-counting}
Suppose that $\FFF$ satisfies \ref{free} and \ref{gap}, and let $d = \gcd\{\abs{w} \mid w\in \FFF\}$.
Then there is $Q_5>0$ such that $\Lambda_{nd}(\FFF,\ph) \geq Q_5 e^{ndP(\ph)}$ for all sufficiently large $n$.
\end{lemma}
\begin{proof}
Replacing $\sigma$ with $\sigma^d$, we assume without loss of generality that $\gcd\{\abs{w} \mid w\in \FFF\}=1$, so there is $m\in \NN$ such that for every $n\geq m$ we have $n = \sum_{i=1}^k a_i \abs{w^i}$ for some $a_i\in \NN$ and $w^i\in \FFF$.  Write $(w^i)^{a_i}$ for the word $w^i$ repeated $a^i$ times and note that by \ref{free} we have $w := (w^1)^{a_1} \cdots (w^k)^{a_k} \in \FFF$, and $\abs{w} = n$.  Thus $\FFF_n$ is non-empty for every $n\geq m$.

Now by \eqref{eqn:countingG2} there are $n_0,N\in \NN$ and $Q_3>0$ such that for every $n\geq n_0 + m$ there is $j\in (n-m-N,n-m]$ with 
\begin{equation}\label{eqn:jmN}
\Lambda_j(\FFF,\ph) \geq Q_3 e^{jP(\ph)} \geq Q_3 e^{-(m+N)P(\ph)} e^{nP(\ph)}.
\end{equation}
Since $n-j \in [m, m+N)$, by the definition of $m$ there is $w\in \FFF_{n-j}$; note that $\abs{\hat\ph(w)} \leq (n-j)\|\ph\| \leq (m+N)\|\ph\|$.
Now we can use \ref{free} to get
\[
\Lambda_n(\FFF,\ph) \geq \sum_{v\in \FFF_j} e^{\hat\ph(vw)} 
\geq e^{-\hat\ph(w) - \absd{\ph}} \sum_{v\in \FFF_j} e^{\hat\ph(v)} 
\geq e^{-\|\ph\|(m+N)- \absd{\ph}}  \Lambda_j(\FFF,\ph) ,
\]
where the second inequality uses the first half of \eqref{eqn:phvw}.  Together with \eqref{eqn:jmN}, this completes the proof of Lemma \ref{lem:gcd-counting}.
\iftoggle{arxiv}{}{\qed}
\end{proof}

\section{Proof of Theorem \ref{thm:tower} and Corollary \ref{cor:1-sided}}\label{sec:towers2}


Throughout this section,  $X$ will be a two-sided shift space, $\LLL$ its language, and $\FFF\subset \LLL$ will satisfy \ref{free}, so that
$I = \FFF \setminus \FFF\FFF$ has $I^* = \FFF \subset \LLL$.  Let $\Sigma$ be the countable-state Markov shift constructed in \S\ref{sec:subsidiary-results}, $T\colon \Sigma\to \Sigma$ the shift map, and $\pi\colon \Sigma\to X$ the one-block code given there.
We will denote a typical element of $\Sigma$ by $\zz = \{\zz_j\}_{j\in \ZZ}$, where each $\zz_j$ is of the form $(w,k)$ for some $w\in I$, $1\leq k\leq \abs{w}$. 

The list of items to prove in Theorem \ref{thm:tower} was illustrated in \eqref{eqn:structure}.  In \S\ref{sec:F-marking} we prove \ref{overlaps-injective} and \ref{injective-decipherable}; in \S\ref{sec:correspondence} we prove \ref{decipherable-spr}; in \S\ref{sec:liftability} we prove \ref{charges}; in \S\ref{sec:decipherable-lifts} we prove \ref{decipherable-finite} and \ref{finite-lifts}; in \S\ref{sec:getting-gibbs} we prove \ref{D-gap} and \ref{gap-gibbs}.  Corollary \ref{cor:1-sided} is proved in \S\ref{sec:1-sided-corollary}.

\subsection{$\FFF$-marking sets, injectivity of $\pi$, and unique decipherability}\label{sec:F-marking}
\label{sec:injectivity}

\begin{proposition}\label{prop:11}
If $\FFF$ satisfies \ref{overlaps}, then $\pi\colon \Sigma\to X$ is injective.  
\end{proposition}

\begin{proposition}\label{prop:unique-inj}
The set $I$ is uniquely decipherable if and only if $\pi$ is 1-1 on $\Per(\Sigma,a)$ for every $a\in A_I$.
\end{proposition}

This section proves Propositions \ref{prop:11} and \ref{prop:unique-inj}, which are \ref{overlaps-injective} and \ref{injective-decipherable}, respectively.
First we set up some terminology.  Say that a (finite or infinite) set $J\subset \ZZ$ is \defn{$\FFF$-marking} for $x\in X$ if $x_{[i,j)}\in \FFF$ for all $i,j\in J$ with $i<j$.  Call $i,j\in J$ \defn{consecutive} if $k\notin J$ for all $k$ between $i$ and $j$.  We record some immediate consequences of \ref{free} as a lemma.

\begin{lemma}\label{lem:G-marking}
$J\subset \ZZ$ is $\FFF$-marking for $x$ if and only if $x_{[i,j)}\in \FFF$ for all consecutive indices $i<j$ in $J$.  In particular, the following are equivalent.
\begin{enumerate}[leftmargin=*, widest=3, label=\textup{(\arabic{*})}]
\item $J$ is $\FFF$-marking.
\item There are $a_k\to-\infty$ and $b_k\to\infty$ such that $J\cap [a_k,b_k]$ is $\FFF$-marking for every $k$.
\item $J\cap [a,b]$ is $\FFF$-marking for every $a,b\in \ZZ$ with $a<b$.
\end{enumerate}
\end{lemma}

Say that $J\subset \ZZ$ is \defn{bi-infinite} if $J\cap [0,\infty)$ and $J\cap (-\infty,0]$ are both infinite.  Say that $J\subset \ZZ$ is \defn{maximally $\FFF$-marking} for $x$ if there is no $\FFF$-marking set $J'\subset \ZZ$ with $J'\supsetneqq J$.  Recall that $I$ is the collection of irreducible elements of $\FFF$.  The following lemma collects properties of bi-infinite $\FFF$-marking sets, and relates these to the multiplicity of the map $\pi$.
The proofs are immediate from the definitions.

\begin{lemma}\label{lem:marking-sets}\hfill
\begin{enumerate}[leftmargin=*, widest=d,
label=\textup{(\alph{*})}]
\item \label{lem:I-marking}
A bi-infinite set $J\subset \ZZ$ is maximally $\FFF$-marking for $x$ if and only if $x_{[i,j)}\in I$ for all consecutive indices $i<j$ in $J$.
\item \label{lem:Jz}
Given $\zz\in \Sigma$, the set $J(\zz) := \{j \mid \zz_j = (w,1) \text{ for some }w\in I\}$ is bi-infinite and maximally $\FFF$-marking for $\pi(\zz)\in X$.
\item \label{lem:J-z}
If $J\subset \ZZ$ is bi-infinite and maximally $\FFF$-marking for $x\in X$, then there is exactly one $\zz\in \Sigma$ such that $\pi(\zz)=x$ and $J(\zz)=J$.
\item\label{lem:piSigma}
Given $x\in X$, we have $x\in \pi(\Sigma)$ 
if and only if there is a bi-infinite $\FFF$-marking set $J\subset \ZZ$ for $x$.  There is a 1-1 correspondence between elements of $\pi^{-1}(x) \subset \Sigma$ and bi-infinite maximal $\FFF$-marking sets for $x$.
\end{enumerate}
\end{lemma}

As \ref{lem:piSigma} clarifies, Lemma \ref{lem:marking-sets}\ref{lem:J-z} does not yet prove injectivity of $\pi$, since it is a priori possible that some $x\in X$ has multiple maximally $\FFF$-marking sets. 
To show injectivity of $\pi$ it suffices to show that every $x\in X$ has at most one bi-infinite maximally $\FFF$-marking set.  We accomplish this by showing that arbitrary unions of bi-infinite $\FFF$-marking sets are still bi-infinite and $\FFF$-marking.  This is where we need \ref{overlaps}.

\begin{lemma}\label{lem:union-good}
Suppose $\FFF$ satisfies \ref{overlaps}, and let $\{J_\lambda\}_{\lambda\in \Lambda}$ be any collection of sets $J_\lambda \subset \ZZ$ such that each $J_\lambda$ is $\FFF$-marking for $x$.   Let $r<s$ be integers such that $r > \min J_\lambda$ and $s< \max J_\lambda$ for all $\lambda$.
Then $\left(\bigcup_\lambda J_\lambda\right)\cap [r,s]$ is $\FFF$-marking for $x$.
\end{lemma}
\begin{proof}
Pick $j<k$ in $\bigcup_\lambda J_\lambda$ with $r\leq j<k\leq s$.  Let $\lambda,\lambda'$ be such that $j\in J_{\lambda}$ and $k\in J_{\lambda'}$.  Because $\min J_{\lambda'} \leq r$ there is $i\in J_{\lambda'}$ with $i\leq j$; similarly, $\max J_\lambda\geq s$ implies that there is $\ell\in J_\lambda$ with $\ell \geq k$.  Thus $i\leq j < k \leq \ell$ are such that $x_{[i,k)}, x_{[j,\ell)}\in \FFF$; see Figure \ref{fig:union-good}.
Moreover, choosing $a\in J_\lambda$ with $a<r$ and $b\in J_{\lambda'}$ with $b>s$, we have $x_{[a,j)},x_{[k,b)}\in \FFF$, and  it follows from Condition \ref{overlaps} that $x_{[j,k)}\in \FFF$.
This holds for all $j,k \in (\bigcup_\lambda J_\lambda)\cap [r,s]$, so we are done.
\iftoggle{arxiv}{}{\qed}
\end{proof}

\begin{figure}[htbp]
\includegraphics[width=150pt]{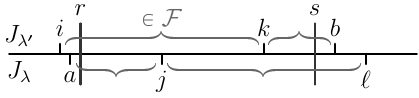}
\caption{The union of $\FFF$-marking sets is $\FFF$-marking.}
\label{fig:union-good}
\end{figure}

\begin{lemma}\label{lem:bi-inf-union-good}
If $\FFF$ satisfies \ref{overlaps} and $\{J_\lambda \subset \ZZ\}_{\lambda\in \Lambda}$ is a collection of bi-infinite $\FFF$-marking sets for $x$, then $\bigcup_\lambda J_\lambda$ is a bi-infinite $\FFF$-marking set for $x$.
\end{lemma}
\begin{proof}
By Lemma \ref{lem:union-good}, $J\cap [a,b]$ is $\FFF$-marking for all integers $a<b$, so by Lemma \ref{lem:G-marking}, $J$ is $\FFF$-marking for $x$.
\iftoggle{arxiv}{}{\qed}
\end{proof}

Now we can prove that $\pi$ is injective, as follows: given $x\in \pi(\Sigma)$, let $\{J_\lambda\}_{\lambda\in \Lambda}$ be the collection of \emph{all} bi-infinite $\FFF$-marking sets for $x$.  This collection is non-empty by Lemma \ref{lem:marking-sets}\ref{lem:Jz}.  By Lemma \ref{lem:bi-inf-union-good}, $J = \bigcup_\lambda J_\lambda$ is bi-infinite and $\FFF$-marking for $x$.  Moreover, if $J'$ is any bi-infinite $\FFF$-marking set, we have $J'\subset J$ by construction, so $J$ is maximal, and it is the only bi-infinite maximal $\FFF$-marking set.  Thus $\pi$ is 1-1, which proves Proposition \ref{prop:11}.

Proposition \ref{prop:unique-inj} is a consequence of the following.

\begin{lemma}\label{lem:Pi}
For each $a=(v,i)\in A_I$, the following are equivalent.
\begin{enumerate}[leftmargin=*, widest=3, label=\textup{(\arabic{*})}]
\item\label{I} $I$ is uniquely decipherable.
\item\label{Pi} The map $\Pi_n\colon \Per_n(\Sigma,a) \to \FFF_{n-\abs{v}}$ given by $\Pi_n(\zz) = \pi(\zz)_{(\abs{v}-i, n-i]}$ is 1-1 for all $n>\abs{v}$.
\item\label{pi} $\pi$ is injective on $\Per(\Sigma,a)$.
\end{enumerate}
\end{lemma}
\begin{proof}
We start by proving that \ref{I} and \ref{Pi} are equivalent.  Define $\iota \colon I\to A_I^*$ by
$\iota(w) = (w,1)\cdots (w,\abs{w})$, and let $G_\Sigma = \{ \iota(w) : w\in I \}$.  Observe that $G_\Sigma^*$ is contained in the language of $\Sigma$, and the map $\Pi_n$ is the composition of the following two maps.
\begin{alignat*}{3}
\Per_n(\Sigma,a) &\to (G_\Sigma^*)_{n-\abs{v}} \qquad\qquad   & (G_\Sigma^*) &\to \FFF = I^* \\
\zz &\mapsto \zz_{(\abs{v} - i, n-i]} & \iota(w^1) \cdots \iota(w^k) &\mapsto w^1\cdots w^k
\end{alignat*}
The first of these is always injective since every $\zz\in \Per_n(\Sigma,a)$ is completely determined by $\zz_{(-i,n-i]}$ and satisfies $\zz_{(-i,\abs{v}-i]} = \iota(v)$.  The second map is injective if and only if $I$ is uniquely decipherable; indeed, this is the definition of unique decipherability.  Thus \ref{I} and \ref{Pi} are equivalent.

For the equivalent of the last two conditions, first observe that if \ref{Pi} fails then \ref{pi} fails.  Conversely, if $\yy,\zz\in \Per(\Sigma,a)$ have $\yy\neq\zz$ and $\pi(\yy)=\pi(\zz)$, then taking $n>\abs{v}$ to be a common multiple of their least periods, we get $\yy,\zz\in \Per_n(\Sigma,a)$ and $\Pi_n(\yy)=\Pi_n(\zz)$.
\end{proof}

\subsection{Strong positive recurrence}\label{sec:correspondence}

\begin{proposition}\label{prop:spr}
If  $\FFF$ satisfies \ref{gap-2} and
$I = \FFF\setminus \FFF\FFF$ is uniquely decipherable, then $\Phi = \ph\circ\pi\in \Ch(\Sigma)$ satisfies $P_G(\Phi) = P(\ph)$  and is strongly positive recurrent.
\end{proposition}

In this section we prove Proposition \ref{prop:spr}, which is \ref{decipherable-spr}.
From \eqref{eqn:Holder} we have $\abs{\ph(x) - \ph(y)} \leq \abs{\ph}_\beta e^{-\beta n}$ whenever $x,y\in X$ have $x_k = y_k$ for all $\abs{k}\leq n$.  
Then for every $\zz,\zz'\in \Sigma$ with $\zz_k = \zz'_k$ for all $\abs{k}\leq n$, we have $\pi(\zz)_k = \pi(\zz')_k$ for all $\abs{k}\leq n$ (since $\pi$ is a one-block code) and hence $\abs{\Phi(\zz) - \Phi(\zz')} \leq \abs{\ph}_\beta e^{-\beta n}$.  In particular, $\Phi\in \Ch(\Sigma)$.  


\begin{lemma}\label{lem:Z-Lambda}
If $I$ is uniquely decipherable, then there is $Q_6>0$ such that for every $\aa = (v,i) \in A_I$ and $n> \abs{v}$ we have
\begin{align}
\label{eqn:ZnF}
e^{-Q_6} \Lambda_{n-\abs{v}}(\FFF,\ph) &\leq Z_n(\Phi,\aa) \leq e^{Q_6} \Lambda_{n-\abs{v}}(\FFF,\ph), \\
\label{eqn:Zn*F}
e^{-Q_6} \Lambda_{n-\abs{v}}((I\setminus\{v\})^*,\ph) &\leq Z_n^*(\Phi,\aa) \leq e^{Q_6} \Lambda_{n-\abs{v}}((I\setminus\{v\})^*,\ph).
\end{align}
\end{lemma}
\begin{proof} Given $n>\abs{v}$, let $\Pi_n$ be as in Lemma \ref{lem:Pi}, and note that $\Pi_n$ is a bijection between $\Per_n(\Sigma,a)$ and $\FFF_{n-\abs{v}}$ for all $n>\abs{v}$.  Let $\tau = \Pi_n^{-1}$; then
\begin{equation}\label{eqn:Zn-Lambdan}
Z_n(\Phi,\aa) = \sum_{w\in \FFF_{n-\abs{v}}} e^{S_n\Phi(\tau(w))},\qquad
Z_n^*(\Phi,\aa) = \sum_{w\in ((I\setminus \{v\})^*)_{n-\abs{v}}} e^{S_n\Phi(\tau(w))}.
\end{equation}
For all $x,y\in [w]$ and $w\in \LLL_n$, \eqref{eqn:bdd-distortion} gives
$\abs{S_n\ph(x) - S_n\ph(y)} \leq \absd{\ph}$,
so each $w\in \FFF_n$ has
\[
\abs{S_n\Phi(\tau(w)) - \hat\ph(w)} \leq \absd{\ph} + \abs{v}\|\ph\|.
\]
Along with \eqref{eqn:Zn-Lambdan}, this completes the proof of Lemma \ref{lem:Z-Lambda}.
\iftoggle{arxiv}{}{\qed}
\end{proof}

It follows from \eqref{eqn:ZnF} and Lemma \ref{lem:CD} that $P_G(\Phi) = P(\FFF,\ph) = P(\ph)$.
By \eqref{eqn:SPR} and Lemma \ref{lem:Z-Lambda}, in order to prove that $\Phi$ is strongly positive recurrent, which will complete the proof of Proposition \ref{prop:spr}, it suffices to show that $P((I')^*,\ph) < P(\ph)$ whenever $I' \subsetneqq I$.  This uses the following result; note that $P(I,\ph) < P(\ph) = P(I^*,\ph)$ by \ref{gap-2} and Lemma \ref{lem:CD}.

\begin{lemma}\label{lem:hat-I}
If $I\subset \LLL$ is uniquely decipherable such that $I^*\subset \LLL$ and $P(I,\ph)<P(I^*,\ph)$, then for every $I' \subsetneqq I$ we have $P((I')^*,\ph) < P(I^*,\ph)$.
\end{lemma}
\begin{proof}
Given $w\in I^*$, let $0 = j_0 < j_1 < \cdots < j_\ell < j_{\ell+1} = \abs{w}$ be such that $w_{(j_i,j_{i+1}]} \in I$ for all $0\leq i\leq \ell = \ell(w)$.
Given $\ell,n\in \NN$ with $\ell<n$,
let $\JJ_\ell = \{J \subset [1,n) \mid \#J = \ell\}$; for each $J\in \JJ_\ell$, let
\[
\XXX_n(J) = \{w\in (I^*)_n \mid \ell(w) = \ell \text{ and } \{j_i(w)\}_{i=1}^\ell = J\}.
\]
Given $\delta>0$, let $\RRR_\delta = \{w\in I^* \mid \#\ell(w) \geq \delta \abs{w}\}$; we will prove Lemma \ref{lem:hat-I} by showing that for sufficiently small values of $\delta$, we have
\begin{align}
\label{eqn:I*R}
P(I^*\setminus \RRR_\delta,\ph) &< P(I^*,\ph), \\
\label{eqn:hat-I-R}
P((I')^* \cap \RRR_\delta,\ph) &< P(I^*,\ph),
\end{align}
and then applying \eqref{eqn:PCD} to $(I')^* \subset ((I')^* \cap \RRR_\delta) \cup (I^*\setminus \RRR_\delta)$.  To prove \eqref{eqn:I*R}, we start by writing $(I^* \setminus \RRR_\delta)_n = \bigcup_{\ell = 0}^{\lfloor \delta n\rfloor} \bigcup_{J\in \JJ_\ell} \XXX_n(J)$, so that
\begin{equation}\label{eqn:Lambda-n-I-R}
\Lambda_n(I^*\setminus \RRR_\delta) \leq \sum_{\ell=0}^{\lfloor \delta n\rfloor} \sum_{J\in \JJ_\ell} \Lambda_n(\XXX_n(J),\ph).
\end{equation}
To get \eqref{eqn:I*R}, we will get upper bounds on $\#\JJ_\ell$ and on $\Lambda_n(\XXX_n(J),\ph)$.  For the first of these, we observe that $\#\JJ_\ell\leq \binom{n}{\ell}$ and use the following useful result.

\begin{lemma}\label{lem:entropy-bound}
Given $\delta\in (0,1)$, write $h(\delta) = -\delta\log \delta - (1-\delta)\log(1-\delta)$ for the standard entropy function.  Then for every $n\in \NN$ and $0\leq \ell\leq n$, we have
$\binom{n}{\ell} \leq n e^{h(\frac\ell n) n}$.
\end{lemma}
\begin{proof}
First note that $\int_1^k \log t\,dt \leq \sum_{j=1}^k \log j = \log(k!) \leq \int_1^{k} \log t\,dt + \log k$.  Evaluating the integrals gives $k\log k - k + 1 \leq \log(k!) \leq k\log k - k + 1 + \log k$, and so
\begin{equation}\label{eqn:log-n-choose-l}
\begin{aligned}
\log \textstyle\binom{n}{\ell} &= \log(n!) - \log(\ell!) - \log(n-\ell)! \\
&\leq n\log n +  \log n - \ell\log \ell - (n-\ell)\log(n-\ell) 
= h(\tfrac\ell{n}) n + \log n,
\end{aligned}
\end{equation}
which proves the lemma.
\iftoggle{arxiv}{}{\qed}
\end{proof}

Given $\delta\in (0,\frac 12)$ and $0\leq \ell \leq \delta n$, we conclude from Lemma \ref{lem:entropy-bound} that $\#\JJ_\ell \leq ne^{h(\delta) n}$.
To bound $\Lambda_n(\XXX_n(J),\ph)$, fix $\eps>0$ such that $P(I,\ph) < P(I^*,\ph) - 2\eps$; then there is $K$ such that $\Lambda_j(I,\ph) \leq K e^{j(P(I^*,\ph) - \eps)}$ for all $j$, and so
\[
\Lambda_n(\XXX_n(J),\ph) \leq \prod_{i=0}^{\ell} \Lambda_{j_{i+1}-j_i}(I,\ph) \leq K^{\ell+1} e^{n(P(I^*,\ph) - \eps)}.
\]
Together with \eqref{eqn:Lambda-n-I-R} and the bound on $\#\JJ_\ell$, this gives
\[
\Lambda_n(I^*\setminus \RRR_\delta) \leq (\delta n + 1 )n e^{h(\delta) n } K^{\delta n + 1} e^{n(P(I^*,\ph) - \eps)},
\]
and \eqref{eqn:I*R} follows by taking $\delta$ small enough that $h(\delta) + \delta \log(K) < \eps$.

To prove \eqref{eqn:hat-I-R}, we will consider for each $k\geq 0$ the collection $\AAA^k = ((I')^k I^*) \cap \RRR_\delta$.  By unique decipherability, each $w\in \AAA^k$ uniquely determines $u^1,\dots, u^k\in I'$, $\ell\geq k+1$, and $u^{k+1},\dots, u^\ell \in I$ such that $w = u^1\cdots u^\ell$.
It follows from the definition that $\AAA^{k+1} \subset \AAA^k$ and $((I')^* \cap \RRR_\delta)_n \subset \AAA_n^k$ for all $k\leq {\lfloor \delta n\rfloor}$, so we can estimate $\Lambda_n((I')^* \cap \RRR_\delta,\ph)$ from above by estimating $\Lambda_n(\AAA_n^k \setminus \AAA_n^{k+1},\ph)$.  Let $d = \gcd\{\abs{u} \mid u \in I\}$; note that $((I')^*)_n = \emptyset$ whenever $n$ is not a multiple of $d$, so we can restrict our attention to the case when $n$ is a multiple of $d$.

Fix $v\in I\setminus I'$.  Given $u\in (I')^k$ with $|uv|\leq n$, we have $(uv(I^*)_{n-\abs{uv}}) \cap \RRR_\delta \subset \AAA_n^k \setminus \AAA_n^{k+1}$.\footnote{This requires unique decipherability to guarantee that $uvw\notin \AAA_n^{k+1}$ for every $w\in (I^*)_{n-\abs{uv}}$.}  Taking the union over all such $u$ gives
\begin{equation}\label{eqn:Ank}
\Lambda_n(I^* \setminus \RRR_\delta,\ph) + \Lambda_n(\AAA^k\setminus \AAA^{k+1},\ph) 
\geq \sum_{u\in ((I')^k)_{\leq n-\abs{v}}} \Lambda_n(uv(I^*)_{n-\abs{uv}},\ph),
\end{equation}
and we observe that
\[
\Lambda_n(uv(I^*)_{n-\abs{uv}},\ph)
\geq\sum_{w\in (I^*)_{n-\abs{uv}}} e^{\hat\ph(u) + \hat\ph(v) + \hat\ph(w) - 2\absd{\ph}}
=
e^{\hat\ph(v) - 2\absd{\ph}}  e^{\hat\ph(u)} \Lambda_{n-\abs{uv}}(I^*,\ph).
\]
Since we assumed that $n$ is a multiple of $d$, and the same is clearly true of $\abs{uv}$, Lemmas \ref{lem:counting} and \ref{lem:gcd-counting} give, for $n$ sufficiently large,
\[
\Lambda_{n-\abs{uv}}(I^*,\ph) \geq Q_5 e^{(n-\abs{uv})P(\ph)}
\geq Q_5 e^{-\abs{v}P(\ph)} Q_2^{-1} \Lambda_{n-\abs{u}}(I^*,\ph),
\]
so writing $\gamma = Q_5 e^{-\abs{v}P(\ph)} Q_2^{-1} e^{\hat\ph(v)- 2\absd{\ph}}$ gives
\begin{equation}\label{eqn:uvI*}
\Lambda_n(uv(I^*)_{n-\abs{uv}},\ph) \geq \gamma e^{\hat\ph(u)} \Lambda_{n-\abs{u}}(I^*,\ph).
\end{equation}
Moreover, for every $k \leq \lfloor \delta n \rfloor - \abs{v}$, we have
\begin{equation}\label{eqn:AnkII}
\AAA_n^k = \bigcup_{\ell\geq \delta n} \big( (I')^k I^{\ell-k}\big)_n 
\subset \bigcup_{\ell\geq \delta n} 
((I')^k)_{\leq n - (\ell - k)} (I^*)
\subset ((I')^k)_{\leq n-\abs{v}} (I^*),
\end{equation}
where the first inclusion uses the fact that each element of $I$ has length at least 1, and the last inclusion uses the fact that $\ell - k \geq \lfloor\delta n\rfloor - (\lfloor \delta n \rfloor - \abs{v}) = \abs{v}$.  Combining \eqref{eqn:Ank}, \eqref{eqn:uvI*}, and \eqref{eqn:AnkII},
for every $k\leq \lfloor \delta n \rfloor - \abs{v}$ we get
\[
\Lambda_n(I^* \setminus \RRR_\delta,\ph) +
\Lambda_n(\AAA^k \setminus \AAA^{k+1},\ph) \geq 
\sum_{u\in ((I')^k)_{\leq n - \abs{v}}} 
\sum_{w\in (I^*)_{n-\abs{u}}} \gamma e^{\hat\ph(u) }e^{\hat\ph(w)}
\geq \gamma \Lambda_n(\AAA^k,\ph).
\]
We conclude that
\[
\Lambda_n(\AAA^{k+1},\ph) = \Lambda_n(\AAA^k,\ph) - \Lambda_n(\AAA^k \setminus \AAA^{k+1},\ph)
\leq (1-\gamma) \Lambda_n(\AAA^k,\ph) + \Lambda_n(I^*\setminus \RRR_\delta,\ph)
\]
for all $k\leq \lfloor \delta n \rfloor - \abs{v}$.
Using the fact that $\AAA_n^0 = (I^*)_n$ and $((I')^* \cap \RRR_\delta)_n \subset \AAA_n^{\lfloor \delta n\rfloor - \abs{v}}$, we get
\[
\Lambda_n((I')^* \cap \RRR_\delta, \ph) \leq \Lambda_n(\AAA^{\lfloor \delta n\rfloor - \abs{v}},\ph) \leq (1-\gamma)^{\lfloor \delta n \rfloor - \abs{v}}
\Lambda_n(I^*,\ph) + \delta n \Lambda_n(I^*\setminus \RRR_\delta,\ph).
\]
For any sequences $b_n,c_n>0$, we have $\ulim \frac 1n \log(b_n + c_n) \leq \max (\ulim \frac 1n \log b_n, \ulim \frac 1n \log c_n)$, and using \eqref{eqn:I*R} we conclude that
\[
P((I')^*\cap \RRR_\delta,\ph) \leq 
\max\big( P(I^*,\ph) +  \delta\log(1-\gamma), P(I^* \setminus \RRR_\delta, \ph) \big) < P(I^*,\ph),
\]
which proves \eqref{eqn:hat-I-R} and completes the proof of Lemma \ref{lem:hat-I}.
\iftoggle{arxiv}{}{\qed}
\end{proof}

\subsection{Approximate equilibrium states charge the tower}\label{sec:liftability}

\begin{proposition}\label{prop:charges}
If $\FFF$ satisfies \ref{gap-2} with $I = \FFF\setminus \FFF\FFF$, then the collection $\EEE' = I \cup \Ep \cup \Es \cup (\LLL\setminus \Ep\FFF\Es)$ has the following property: for any ergodic measure $\mu$ on $X$ with $h(\mu) + \int \ph\,d\mu > P(\EEE',\ph)$, we have $\mu(\pi(\Sigma))=1$.  
\end{proposition}

Note that Proposition \ref{prop:charges} establishes \ref{charges} by taking $P' = P(\EEE',\ph)$, since then \ref{gap-2} gives $P' < P(\ph)$.  To prove the proposition, we will need the following notion: say that $R\subset \ZZ$ is  \defn{$\EEE'$-restricting for $x\in X$} if for every $i<j\in \ZZ$ with $x_{[i,j)}\in \EEE'$, the interval $[i,j]$ contains at most one element of $R$.

First we prove in Lemma \ref{lem:restricting-marking} that $x\in \pi(\Sigma)$ whenever $x$ has a bi-infinite $\EEE'$-restricting set; then in Lemma \ref{lem:infinite-restricting} we show that $\mu$-a.e.\ $x\in X$ has such a set whenever $\mu$ is ergodic and  $h(\mu) + \int\ph\,d\mu > P(\EEE',\ph)$.  Together these will complete the proof of Proposition \ref{prop:charges}.

\begin{lemma}\label{lem:restricting-marking}
If $x\in X$ has a bi-infinite $\EEE'$-restricting set $R\subset \ZZ$, then it has a bi-infinite $\FFF$-marking set $J$, and hence $x\in \pi(\Sigma)$ by Lemma \ref{lem:marking-sets}\ref{lem:piSigma}.
\end{lemma}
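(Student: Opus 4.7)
The plan is to construct, for each $n\geq 1$, a finite $\FFF$-marking set $J_n$ inside $[r_{-n},r_n]$ that is forced (by the hypothesis $I\subset \EEE'$) to meet every interval between consecutive $R$-points it spans, and then to extract a subsequential pointwise limit $J$ via compactness of $\{0,1\}^{\ZZ}$.

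First I would enumerate $R=\{r_k\}_{k\in\ZZ}$ in increasing order. Since $[r_{-n},r_n]$ contains two $R$-points, the restricting hypothesis gives $x_{[r_{-n},r_n)}\notin \EEE'$, so there is a decomposition $x_{[r_{-n},r_n)}=u_n v_n w_n$ with $u_n\in \EEE^p$, $v_n\in \FFF$, $w_n\in \EEE^s$. Reapplying the restricting hypothesis to $u_n,w_n\in \EEE'$ forces $u_n$ to end strictly before $r_{-n+1}$ and $w_n$ to start strictly after $r_{n-1}$, so $v_n$ occupies positions $[\alpha_n,\beta_n)$ with $\alpha_n\in[r_{-n},r_{-n+1})$ and $\beta_n\in(r_{n-1},r_n]$. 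Factoring $v_n=v_n^1\cdots v_n^{\ell_n}$ with each $v_n^j\in I$ (possible since $v_n\in I^*$) and letting $J_n=\{a_0^{(n)},\dots,a_{\ell_n}^{(n)}\}$ be the endpoints of these factors (so $a_0^{(n)}=\alpha_n$ and $a_{\ell_n}^{(n)}=\beta_n$), condition \ref{free} makes $J_n$ an $\FFF$-marking set for $x$.

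The key step is to observe that each $v_n^j\in I\subset \EEE'$, so the restricting hypothesis forces the $I$-subinterval $[a_{j-1}^{(n)},a_j^{(n)}]$ to contain at most one element of $R$. Hence for any $k$ with $-n+1\leq k\leq n-2$ the consecutive points $r_k,r_{k+1}\in(\alpha_n,\beta_n)$ cannot lie in the same $I$-subinterval, forcing $J_n\cap[r_k,r_{k+1}]\neq \emptyset$. A standard diagonal argument then yields a subsequence $(n_m)$ along which $\mathbf{1}_{J_{n_m}}(p)$ is eventually constant for each $p\in\ZZ$; set $J=\{p\in\ZZ:p\in J_{n_m}\text{ for all large }m\}$. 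For $i<j\in J$ both points lie in $J_{n_m}$ for large $m$, so $x_{[i,j)}\in \FFF$ and $J$ is $\FFF$-marking; and for each fixed $k$ the non-empty finite set $J_{n_m}\cap[r_k,r_{k+1}]$ stabilises to a non-empty subset of $J$, which together with $r_k\to\pm\infty$ makes $J$ bi-infinite. The crux of the proof is the density observation: without using $I\subset \EEE'$, bare compactness would allow $J$ to have arbitrary gaps straddling $R$-points, and bi-infiniteness could fail.
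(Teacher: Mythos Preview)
Your proof is correct and follows essentially the same approach as the paper: decompose each window $x_{[r_{-n},r_n)}$ into $\EEE^p\FFF\EEE^s$, factor the $\FFF$-part into $I$-words to get a finite $\FFF$-marking set $J_n$, use $I\subset\EEE'$ and the restricting hypothesis to force $J_n$ to meet each interval between consecutive $R$-points, and then pass to a limit. The only difference is cosmetic: the paper extracts the limit by an explicit pigeonhole/diagonal selection of one point $s_\ell$ per interval $[r_\ell,r_{\ell+1})$, whereas you invoke compactness of $\{0,1\}^{\ZZ}$ to take a subsequential limit of the indicator functions $\mathbf{1}_{J_n}$; both yield the same bi-infinite $\FFF$-marking set.
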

\begin{proof}
Enumerate $R$ as $R = \{r_n\}_{n\in \ZZ}$ where $r_n$ is increasing (see Figure \ref{fig:infinite-restricting}).  Given $n\in \NN$, note that $x_{[r_{-n},r_n)} \in \Ep \FFF \Es$ since otherwise we would have a word in $\LLL \setminus \Ep \FFF \Es \subset \EEE'$ that crosses more than one index in $R$.  Thus there are
$j' \leq j''$ in $[r_{-n},r_n)$ such that
\[
x_{[r_{-n},j')}\in \Ep,\quad
x_{[j',j'')}\in \FFF,\quad
x_{[j'',r_n)}\in \Es.
\]
Now use the decomposition $\FFF = I^* = \bigcup_{k\in \NN} I^k$ to get an increasing sequence $\{j^n_i\}_{i=1}^{k+1}$ such that $j^n_1 = j'$, $j^n_{k+1} = j''$, and $x_{[j^n_i,j^n_{i+1})}\in I$ for every $1\leq i \leq k$.
Put $j^n_0 = r_{-n}$ and $j^n_{k+2} = r_n$.  Then  $x_{[j^n_i, j^n_{i+1})}\in \EEE'$ for each $0\leq i\leq k+1$.  
Since $R$ is $\EEE'$-restricting, we conclude that
\[
\# \{ \ell \in [-n,n) : r_\ell \in [j^n_i, j^n_{i+1}] \} \leq 1
\quad\text{for every } 0\leq i\leq k+1;
\]
in particular, if $i,\ell$ are such that $j_i^n \leq r_\ell \leq j_{i+1}^n$, then $j_{i+1}^n \in [r_\ell, r_{\ell+1})$.  Since $j_0^n =r_{-n}$ and $j_{k+2}^n = r_n$, it follows that for every 
$\ell\in [-n,n)$ the interval $[r_\ell, r_{\ell+1})$ contains at least one element of $J_n := \{j^n_i\}_{i=1}^{k+1}$.  Note that $J_n$ is $\FFF$-marking for $x$.  

\begin{figure}[htbp]
\includegraphics[width=190pt]{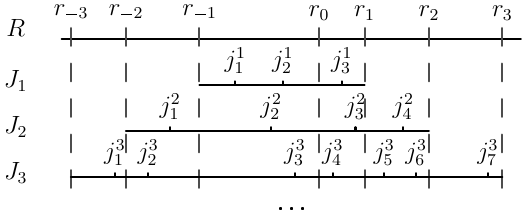}
\caption{Constructing a bi-infinite $\FFF$-marking set.}
\label{fig:infinite-restricting}
\end{figure}

By the previous paragraph, there is $s_0\in [r_0,r_1)$ such that the set $\mathbf{N}_0 := \{n\in \NN \mid s_0 \in J_n\}$ is infinite.  Similarly, there are $s_1\in [r_1,r_2)$ and $s_{-1}\in [r_{-1},r_0)$ such that $\mathbf{N}_1 := \{n\in \mathbf{N}_0 \mid s_1,s_{-1}\in J_n\}$ is infinite.  Continuing in this manner, we choose for each $\ell\in \NN$ two indices $s_\ell\in [r_\ell,r_{\ell+1})$ and $s_{-\ell}\in [r_{-\ell},r_{-\ell+1})$ such that $\mathbf{N}_\ell = \{n\in \mathbf{N}_{\ell-1} \mid s_\ell, s_{-\ell} \in J_n\}$ is infinite.  It follows from the definition of $J_n$ that $x_{[s_\ell,s_{\ell+1})} \in \FFF$ for every $\ell\in \ZZ$, so $J := \{s_\ell\}_{\ell\in \ZZ}$ is a bi-infinite $\FFF$-marking set for $x$.
\iftoggle{arxiv}{}{\qed}
\end{proof}

\begin{lemma}\label{lem:Cn-decay}
Let $\mu$ be an ergodic measure for $X$, and let $\DDD\subset \LLL$ be such that $P(\DDD,\ph) < h(\mu) + \int\ph\,d\mu$.  Then for $\mu$-a.e.\ $x\in X$, there is $n=n(x)\in\NN$ such that for all $k\geq n$ and all $\ell\in [0,k]$ we have $x_{[-\ell,k-\ell)}\notin \DDD$.  
Equivalently, the measure of the following sets decays to 0 as $n\to\infty$:
\begin{equation}\label{eqn:Bn}
B_n(\DDD) := \{x\in X \mid x_{[-\ell,k-\ell)} \in \DDD \text{ for some $k\geq n$ and $\ell\in [0,k]$}\}.
\end{equation}
\end{lemma}
\begin{proof}
Fix $\eps>0$ such that $h(\mu) + \int\ph\,d\mu - 5\eps > P(\DDD,\ph)$.   By Theorems \ref{thm:strong-birkhoff} and \ref{thm:strong-smb}, for $\mu$-a.e.\ $x\in X$ there is $N_x\in \NN$ such that for all $n\geq N_x$ and $\ell\in [0,n]$ we have
\begin{equation}\label{eqn:mu-Sn}
\mu[x_{[-\ell,-\ell+n)}] \leq e^{-n h(\mu) + n\eps},
\qquad
S_n\ph (\sigma^{-\ell}x) \geq n\left( \int\ph\,d\mu - \eps \right), 
\qquad
n\eps \geq \absd{\ph}.
\end{equation}
Let $A_n = \{x\in X \mid N_x \leq n\}$; then $\mu(A_n)\to 1$ as $n\to\infty$.  If $k\geq n$ and $w\in \LLL_k$ is such that $\sigma^\ell[w] \cap A_n \neq \emptyset$ for some $0\leq \ell\leq k$, then we can choose $x$ in the intersection, so that $x_{[-\ell,-\ell+k)} = w$ and $k\geq N_x$; then \eqref{eqn:mu-Sn} gives
\[
\mu[w]  \leq e^{-k h(\mu) + k\eps} \quad\text{and}\quad
\hat\ph(w) \geq (S_k\ph(\sigma^{-\ell}x)) - \absd{\ph} \geq k\left( \int\ph\,d\mu - 2\eps \right),
\]
so we get
\begin{equation}\label{eqn:muw}
\mu[w] e^{-\hat\ph(w)} \leq e^{-k (h(\mu) + \int\ph\,d\mu - 3\eps)} 
\leq e^{-k P(\DDD,\ph)} e^{-2k\eps}.
\end{equation}
Summing over all long cylinders that intersect $A_n \cap B_n(\DDD)$ gives
\begin{multline*}
\mu(A_n \cap B_n(\DDD)) \leq \sum_{k\geq n} \sum_{w\in \DDD_k} \sum_{\ell=0}^k \mu(\sigma^\ell [w]\cap A_n) \\
\leq \sum_{k\geq n} (k+1) \sum_{w\in \DDD_k} e^{-kP(\DDD,\ph)} e^{-2k\eps} e^{\hat\ph(w)} 
= \sum_{k\geq n} (k+1)e^{-kP(\DDD,\ph)} e^{-2k\eps} \Lambda_k(\DDD,\ph).
\end{multline*}
Choose $C$ such that $\Lambda_k(\DDD,\ph) \leq C e^{k(P(\DDD,\ph)+\eps)}$ for all $k$.  Then
\[
\mu(A_n\cap B_n(\DDD)) \leq C \sum_{k\geq n} (k+1) e^{-k\eps} \to 0 \text{ as } n\to\infty,
\]
so
$
\mu(B_n(\DDD)) \leq \mu(X\setminus A_n) + \mu(A_n\cap B_n(\DDD)) \to 0 \text{ as } n\to\infty.
$
Thus $\mu(\bigcap_{n\in \NN} B_n(\DDD)) = 0$, and so for $\mu$-a.e.\ $x\in X$ there is $n>0$ with $x\notin B_n(\DDD)$, which proves Lemma \ref{lem:Cn-decay}.
\iftoggle{arxiv}{}{\qed}
\end{proof}

\begin{lemma}\label{lem:infinite-restricting}
Let $\mu$ be any ergodic measure on $X$ with $h(\mu) + \int\ph\,d\mu > P(\EEE',\ph)$.  Then $\mu$-a.e.\ $x\in X$ has a bi-infinite $\EEE'$-restricting set.  
\end{lemma}
\begin{proof}
Let $E$ be the set of points satisfying the conclusion of Lemma \ref{lem:Cn-decay} with $\DDD=\EEE'$; given $x\in E$ there is $n(x)$ such that for all $k\geq n(x)$ and all $\ell\in [0,k]$ we have $x_{[-\ell,k-\ell)}\notin \EEE'$.  By Lemma \ref{lem:Cn-decay} we have $\mu(X\setminus E)=0$, hence $\mu(\sigma^{-m}(X\setminus E))=0$ for every $m\in \ZZ$, and we conclude that $E' := \bigcap_{m\in \ZZ} \sigma^{-m} E$ has full $\mu$-measure.  For every $x\in E'$ and every $m\in \ZZ$ there is $n(m)\in \NN$ such that for all $a\leq m \leq b$ with $b-a \geq n(m)$, we have $x_{[a,b)}\notin \EEE'$.

Given $x\in E'$, define  $r_j \in \ZZ$ by
$r_0=0$;
$r_{j+1} = r_j + n(r_j)$ for $j\geq 0$;
and
$r_{j-1} = r_j - n(r_j)$ for $j\leq 0$.
 Let $R = \{ r_j \}_{j\in \ZZ}$, and note that $R$ is bi-infinite.  We claim that $R$ is $\EEE'$-restricting for $x$.  Note that by the construction of $R$, we have $r_{j+1} - r_j \geq \min(n(r_j),n(r_{j+1}))$ for every $j\in \ZZ$.  Thus if $a<b\in \ZZ$ are such that $a\leq r_j$ and $b\geq r_{j+1}$, we either have $b-a\geq n(r_j)$ or $b-a \geq n(r_{j+1})$.  It follows from the definition of $n$ that $x_{[a,b)} \notin\EEE'$, since $r_j,r_{j+1}\in [a,b]$, and we conclude that $R$ is $\EEE'$-restricting for $x$.
\iftoggle{arxiv}{}{\qed}
 \end{proof}


\subsection{Finite multiplicity and liftability}\label{sec:decipherable-lifts}

\begin{proposition}\label{prop:finite-mult}
Suppose that $I$ is uniquely decipherable and $\FFF = I^*$ satisfies \ref{gap-2}.
Let $\EEE' = I \cup \Ep \cup \Es \cup (\LLL\setminus\Ep\FFF\Es)$, as in Proposition \ref{prop:charges},
and let $\mu$ be an ergodic measure on $X$ such that $h(\mu) + \int \ph\,d\mu > P(\EEE',\ph)$ and $h(\mu) + \int \ph\,d\mu > P((I')^*,\ph)$ for some $I' \subsetneqq I$.  Then $\#\pi^{-1}(x) < \infty$ for $\mu$-a.e.\ $x\in X$.  
\end{proposition}

This section is devoted to Proposition \ref{prop:finite-mult}, which establishes \ref{decipherable-finite} by taking $P' = \max(P(\EEE',\ph), P((I')^*,\ph))$ (note that $P'<P(\ph)$ by \ref{gap-2} and Lemma \ref{lem:hat-I}), and Lemma \ref{lem:finite-lifts} below, which is \ref{finite-lifts}.  
Let $I'\subsetneqq I$ be as in the hypothesis, and fix $v\in I\setminus I'$.  Then $\aa = (v,1) \in A_I$ is a vertex in the graph giving $\Sigma$, and we write
\[
\EEE'' := \{ \pi(\zz_1 \cdots \zz_n) \mid \zz\in \Sigma, n\in \NN, \zz_i\neq \aa \text{ for all } 1\leq i\leq n\}
\]
for the collection of all words in $\LLL$ that can be lifted to a word in the language of $\Sigma$ that avoids $\aa$.  Observe that $\EEE'' \subset \EEE' (I')^* \EEE'$, and thus by Lemma \ref{lem:CD} we get
\[
P(\EEE'',\ph) \leq \max\{ P(\EEE',\ph), P((I')^*,\ph) \} < h(\mu) + \int\ph\,d\mu.
\]
Now consider the sets $C_N := B_N(\EEE'')$ as in \eqref{eqn:Bn}.
Applying Lemma \ref{lem:Cn-decay} with $\DDD = \EEE''$, there is $N\in \NN$ such that $\mu(X\setminus C_N) > 0$.  Consider the set $E := \{x\in X \mid f^n(x)\in X\setminus C_N$ for infinitely many positive $n$ and infinitely many negative $n\}$.  If $\mu$ is ergodic, then Poincar\'e recurrence implies that $\mu(E)=1$, and so the proof of Proposition \ref{prop:finite-mult} is completed by the following lemma, the idea of which goes back to Bowen and Marcus \cite[p.\ 13--14]{rB78} (see also \cite[p.\ 229]{PP90} and \cite[Theorem 12.8]{oS13}).

\begin{lemma}\label{lem:short-restricting}
For every $x\in E$ we have $\#\pi^{-1}(x) \leq N^2$.
\end{lemma}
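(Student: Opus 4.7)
The plan is to exploit the two-sided recurrence of $x\in E$ to $X\setminus B'_N$ to extract, inside every lift $\zz\in\pi^{-1}(x)$, two occurrences of the symbol $\aa$: a ``forward'' anchor within $N$ positions of the first positive time the orbit returns to $X\setminus B'_N$, and a ``backward'' anchor within $N$ positions of the last negative such time. Each anchor can occupy only $N$ sites, so the map sending a lift to its pair of anchors ranges over a set of size $N^2$, and I will show via unique decipherability that the pair determines the lift, yielding $\#\pi^{-1}(x)\leq N^2$.

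First I would fix $n_+>0$ and $n_-<0$ with $\sigma^{n_\pm}x\in X\setminus B'_N$; both exist by the definition of $E$. Unwinding the definitions of $B'_N$ and $\EEE''$, the assertion $\sigma^n x\notin B'_N$ says exactly that every finite lift of $\sigma^n x$ to the language of $\Sigma$ contains an occurrence of $\aa$ in every window of length $\geq N$ straddling position $0$. Applying this to the shifted lift $T^{n_\pm}\zz\in\pi^{-1}(\sigma^{n_\pm}x)$, with windows $[-(N-1),1)$ and $[0,N)$ respectively, I obtain positions $i_\zz\in[n_+-N+1,n_+]$ and $j_\zz\in[n_-,n_-+N-1]$ with $\zz_{i_\zz}=\zz_{j_\zz}=\aa$.

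Next I would prove that the assignment $\zz\mapsto(i_\zz,j_\zz)$ is injective. Suppose $\zz,\zz'\in\pi^{-1}(x)$ share the same pair. Since $\zz_{i_\zz}=(v,1)=\zz'_{i_\zz}$, both $\zz$ and $\zz'$ induce decompositions of $x_{[i_\zz,\infty)}$ as one-sided infinite concatenations of elements of $I$ beginning with $v$. By unique decipherability of $I$---promoted from finite words to one-sided infinite concatenations by a standard alignment argument on the strictly increasing length sequences $\{\lvert u^1\cdots u^m\rvert\}$ and $\{\lvert v^1\cdots v^n\rvert\}$---these decompositions coincide, so $\zz|_{[i_\zz,\infty)}=\zz'|_{[i_\zz,\infty)}$. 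A symmetric argument in the backward direction, using that unique decipherability is preserved under reversal of words, and applied at the anchor $j_\zz$, determines $\zz|_{(-\infty,j_\zz]}$. Since the two half-sequences overlap on $[j_\zz,i_\zz]$ (where they must agree), $\zz$ is fully determined by $(i_\zz,j_\zz)$, and therefore $\#\pi^{-1}(x)\leq N\cdot N=N^2$.

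The main obstacle I expect is cosmetic but worth flagging: correctly unwinding $B'_N$ and $\EEE''$ so that $\sigma^n x\notin B'_N$ really does force $T^n\zz$ to visit $\aa$ in \emph{every} window of length $\geq N$ straddling $0$ (and in particular, in the two specific windows I use above), rather than just in some single distinguished window. Beyond that, the only nontrivial piece is the upgrade from finite unique decipherability to uniqueness of one-sided infinite concatenations, which is textbook; once both are in hand, the rest of the proof is routine bookkeeping with the marking sets $J(\zz)$ of Lemma~\ref{lem:marking-sets}.
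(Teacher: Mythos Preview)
Your argument has a genuine gap at the covering step. With $j_\zz\in[n_-,n_-+N-1]$ and $i_\zz\in[n_+-N+1,n_+]$ you have $j_\zz<0<i_\zz$, so the half-lines $(-\infty,j_\zz]$ and $[i_\zz,\infty)$ that you claim to determine are \emph{disjoint}; they do not ``overlap on $[j_\zz,i_\zz]$'' as you assert, and the middle segment $(j_\zz,i_\zz)$ is left completely undetermined. Reversing the directions (forward from $j_\zz$, backward from $i_\zz$) would cover $\ZZ$, but then you genuinely need the one-sided infinite upgrade of unique decipherability, and contrary to what you say this upgrade is \emph{not} textbook---it is false in general. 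For instance $I=\{a,ab,bb\}$ is uniquely decipherable (check via Sardinas--Patterson), yet the one-sided sequence $ab^\infty$ admits the two distinct $I$-factorisations $a\cdot bb\cdot bb\cdots$ and $ab\cdot bb\cdot bb\cdots$. Since this lemma is being proved under the sole hypothesis that $I$ is uniquely decipherable (this is exactly how \ref{decipherable-finite} feeds into Theorem~\ref{thm:coded}), you cannot assume anything stronger.

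The paper avoids both problems by never attempting to pin down the full lift from a single fixed pair of return times. It argues by contradiction: given $N^2+1$ distinct preimages $\zz^1,\dots,\zz^{N^2+1}$, first choose $m$ so large that the words $\zz^i_{[-m,m]}$ are already pairwise distinct, and \emph{then} pick return times $n_1\leq -m<m\leq n_2$ to $X\setminus B'_N$. Each $\zz^i$ either lacks an $\aa$-anchor in one of the two length-$N$ windows $(n_1-N,n_1]$, $[n_2,n_2+N)$ (which would put $\sigma^{n_1}x$ or $\sigma^{n_2}x$ back in $B'_N$, a contradiction), or has anchors $k_1\leq n_1$ and $k_2\geq n_2$ bracketing $[-m,m]$; then \emph{finite} unique decipherability of $x_{[k_1,k_2)}\in I^*$ determines $\zz^i$ on $[-m,m]$, so the $N^2+1$ preimages would produce $N^2+1$ distinct anchor-pairs among only $N^2$ possibilities. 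The idea you are missing is to let the return times depend on the finite window that already separates the lifts, so that only the finite form of unique decipherability is ever needed.
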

\begin{proof}
Suppose there are $N^2+1$ distinct points $\zz^1,\dots, \zz^{N^2+1} \in \Sigma$ such that $\pi(\zz^i)=x$ for every $1\leq i\leq N^2+1$.  Then there is $m\in \NN$ such that the words $\zz^i_{[-m,m]}$ are all distinct.  By the definition of $E$, there are $n_1,n_2\in \ZZ$ such that $n_1 \leq -m < m \leq n_2$ and $f^{n_1}(x), f^{n_2}(x)\in X\setminus C_N$.  By unique decipherability and the definition of $\Sigma$, for every pair of integers $k_1 \in (n_1 - N, n_1]$ and $k_2 \in [n_2, n_2+N)$ there is at most one $\zz\in \pi^{-1}(x)$ such that $\zz_{k_1} = \zz_{k_2} = \aa$.  In particular, from the collection $\zz^1,\dots, \zz^{N^2+1}$, there must be some $\zz^i\in \pi^{-1}(x)$ with the property that $\zz^i_k\neq \aa$ for every $k\in (n_1-N,n_1]$, or $\zz^i_k\neq \aa$ for every $k\in [n_2, n_2+N)$.  But then either  $\pi(\zz^i_{(n_1-N,n_1]}) \in \EEE''$ or $\pi(\zz^i_{[n_2,n_2+N)})\in \EEE''$, contradicting the assumption that $f^{n_1}(x),f^{n_2}(x)\in X\setminus C_N$.
\iftoggle{arxiv}{}{\qed}
\end{proof}

Part \ref{finite-lifts} of Theorem \ref{thm:tower} is the following well-known lemma; for a proof, see \cite[Proposition 13.2]{oS13}.\footnote{The result there is stated in the context of surface diffeomorphisms, but only uses the fact that $\mu$-a.e.\ point has at least one and at most finitely many preimages.}

\begin{lemma}\label{lem:finite-lifts}
If $\mu$ is an ergodic invariant measure on $X$ such that $\mu(\pi(\Sigma))=1$ and $\#\pi^{-1}(x) < \infty$ for $\mu$-a.e.\ $x\in X$, then there is an ergodic invariant measure $\nu$ on $\Sigma$ such that $\pi_*\nu = \mu$, and moreover $h(\nu) = h(\mu)$.
\end{lemma}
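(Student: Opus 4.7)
The plan is to construct $\nu$ explicitly as a fiber-weighted disintegration of $\mu$, extract an ergodic component projecting to $\mu$, and then invoke the finite-to-one hypothesis to get the entropy equality.

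For the construction, I would define, for Borel $B \subset \Sigma$,
\[
\nu(B) := \int_X \frac{\#(B \cap \pi^{-1}(x))}{\#\pi^{-1}(x)} \, d\mu(x).
\]
The hypotheses $\mu(\pi(\Sigma)) = 1$ and $\#\pi^{-1}(x) < \infty$ $\mu$-a.e.\ guarantee that $\nu$ is a well-defined Borel probability measure and that $\pi_*\nu = \mu$. Since $T$ is a homeomorphism satisfying $\pi \circ T = \sigma \circ \pi$, it restricts to a bijection $\pi^{-1}(y) \to \pi^{-1}(\sigma y)$; in particular $\#\pi^{-1}(y) = \#\pi^{-1}(\sigma y)$ and $\#(T^{-1}B \cap \pi^{-1}(y)) = \#(B \cap \pi^{-1}(\sigma y))$. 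Substituting into the defining formula and using the $\sigma$-invariance of $\mu$ (via the change of variable $y \mapsto \sigma^{-1}y$) gives $T_*\nu = \nu$.

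To obtain an ergodic lift, I would pass to an ergodic component. Pushforwards of ergodic measures by factor maps are ergodic (preimages of invariant sets are invariant), so in the ergodic decomposition $\nu = \int \nu_\omega\,dP(\omega)$, each $\pi_*\nu_\omega$ is ergodic, and integrating yields $\mu = \int \pi_*\nu_\omega\,dP$, an expression of $\mu$ as an integral of ergodic measures. Uniqueness of the ergodic decomposition of the already-ergodic measure $\mu$ forces $\pi_*\nu_\omega = \mu$ for $P$-a.e.\ $\omega$, and any such $\nu_\omega$ serves as the required lift.

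The main obstacle is the entropy equality $h(\nu) = h(\mu)$. The inequality $h(\nu) \geq h(\mu)$ holds automatically for any factor map, so the real content is the reverse inequality, where the finite-fiber assumption is essential. I would apply Rokhlin's formula $h(\nu) = h(\mu) + h_\nu(T \mid \pi^{-1}\mathcal{B}_X)$ and argue that the relative entropy vanishes. For any finite partition $\beta$ of $\Sigma$, each atom of $\beta^n := \bigvee_{k=0}^{n-1} T^{-k}\beta$ meets a fiber $\pi^{-1}(x)$ in at most $\#\pi^{-1}(x)$ points, so
\[
H_\nu(\beta^n \mid \pi^{-1}\mathcal{B}_X) \leq \int_X \log \#\pi^{-1}(x)\,d\mu.
\]
This bound is independent of $n$ whenever $\log \#\pi^{-1}(x) \in L^1(\mu)$; dividing by $n$ and letting $n \to \infty$ gives $h_\nu(\beta, T \mid \pi^{-1}\mathcal{B}_X) = 0$ for every finite $\beta$, hence $h_\nu(T \mid \pi^{-1}\mathcal{B}_X) = 0$. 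The general case, where $\#\pi^{-1}(x)$ is only $\mu$-a.e.\ finite, is handled by exhausting $X$ with the sublevel sets $\{x : \#\pi^{-1}(x) \leq N\}$ and passing to the limit as $N \to \infty$; alternatively, the same conclusion is furnished by the Ledrappier--Walters relative variational principle, since finite fibers carry zero fiber topological entropy.
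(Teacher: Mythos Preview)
Your proof is correct and follows essentially the same route as the paper's. The paper defines the identical measure (writing it as $\tilde\mu(E) = \int_X \frac{1}{\#\pi^{-1}(x)}\sum_{\zz\in\pi^{-1}(x)} \mathbf{1}_E(\zz)\,d\mu(x)$), passes to an ergodic component just as you do, and for the entropy equality simply cites the finite-extension results of Newton--Parry and Buzzi; your relative-entropy argument supplies the details that the paper outsources.
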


\subsection{Verifying the pressure gap and Gibbs property}\label{sec:getting-gibbs}

The remaining items in Theorem \ref{thm:tower} are \ref{D-gap} and \ref{gap-gibbs}, which we prove here as Propositions \ref{prop:D-gap} and \ref{prop:Gibbs-measure}, respectively.

\begin{proposition}\label{prop:D-gap}
If $I\subset \LLL$ is a generating set for $X$ for which $\DDD = \DDD(I) = \{w\in \LLL : \LLL w \LLL \cap I \neq \emptyset\}$ has the property that $P(\DDD,\ph) < P(\ph)$, then $\FFF=I^*$ satisfies \ref{gap-2}.
\end{proposition}
\begin{proof}
The collection $\DDD$ consists of all words that appear as subwords of a generator; in particular, $I \subset \DDD$.  
Let $\Ep = \Es = \DDD$; we claim that $\LLL \setminus \Ep\FFF\Es \subset \DDD\DDD \cup \DDD$,
so that $I \cup \Ep \cup \Es \cup (\LLL \setminus \Ep\FFF\Es) \subset \DDD\DDD \cup \DDD$, which will be enough to prove \ref{gap-2} since $P(\DDD\DDD \cup \DDD,\ph) = P(\DDD,\ph)$ by \eqref{eqn:PCD} and Lemma \ref{lem:CD}.
To prove the claim, we show that $\LLL \subset \DDD\FFF\DDD \cup \DDD\DDD \cup \DDD$.  Observe that since $X = \overline{I^\infty}$, for every $w\in \LLL$, the nonempty open set $[w]$ intersects $I^\infty$.  Writing $x\in [w] \cap I^\infty$, there is $(n_k)_{k\in \ZZ} \subset \ZZ$ such that $n_k<n_{k+1}$ and $v^k := x_{[n_k,n_{k+1})} \in I$ for all $k\in \ZZ$.  Let $j\leq k$ be integers such that $n_j \leq 1 < n_{j+1}$ and $n_k \leq \abs{w}+1 < n_{k+1}$.  Then $w = x_{[1,\abs{w}]}$ is a subword of $x_{[n_j,n_{k+1})} = w^j w^{j+1} \cdots w^k$.  If $k=j$ this implies that $w\in \DDD$; if $k=j+1$ then $w\in \DDD\DDD$, and if $k\geq j+2$ then
\[
w=x_{[1,n_{j+1})} w^{j+1} \cdots w^{k-1} x_{[n_k,\abs{w}]} \in \DDD \FFF \DDD,
\]
 which proves the claim and completes the proof of Proposition \ref{prop:D-gap}.
\iftoggle{arxiv}{}{\qed}
\end{proof}

Recall a standard construction of an equilibrium state:  write $\delta_x$ for the point mass at $x\in X$, then for each $w\in \LLL$, let $x(w)\in [w]$ be a point that maximises $S_{\abs{w}}\ph$, and consider the measures defined by
\begin{equation}\label{eqn:nun-mun}
\nu_n = \frac 1{\Lambda_n(\LLL,\ph)} \sum_{w\in \LLL_n} e^{\hat\ph(w)} \delta_{x(w)}, \qquad \qquad
\mu_n = \frac 1n \sum_{k=0}^{n-1} \sigma_*^k \nu_n.
\end{equation}

\begin{proposition}\label{prop:Gibbs-measure}
Every weak* limit point $\mu$ of the sequence $\{\mu_n\}$ is an equilibrium state for $\ph$, and  has the Gibbs property \eqref{eqn:gibbs} for $\ph$ on $\FFF$.
\end{proposition}
\begin{proof}
The first claim is shown in the proof of
\cite[Theorem 9.10]{pW82}.
For the second, first fix $n\in \NN$ and $w\in \LLL_n$.  Then for $m > n$ and $1\leq k<m-n$, we have
\[
\nu_m(\sigma^{-k}[w]) = \frac{\Lambda_m(\HHH_m(w,k),\ph)}{\Lambda_m(\LLL,\ph)}
\leq \frac{Q_4 e^{(m-\abs{w})P(\ph) + \hat\ph(w)}}{e^{mP(\ph)}},
\]
where the inequality uses Proposition \ref{prop:Gibbs} for the numerator and Lemma \ref{lem:counting} (specifically the first inequality of \eqref{eqn:countingL}) for the denominator.  Sending $m\to\infty$ gives the upper Gibbs bound in \eqref{eqn:gibbs}.  To prove the lower Gibbs bound, we observe that when $w\in \FFF$, we have
\[
\nu_m(\sigma^{-k}[w]) = \frac{\Lambda_m(\HHH_m(w,k),\ph)}{\Lambda_m(\LLL,\ph)}
\geq \frac{Q_5 e^{(m-\abs{w})P(\ph) + \hat\ph(w)}}{Q_2 e^{mP(\ph)}}
\]
for all sufficiently large $m$; the inequality uses Lemma \ref{lem:counting} for the denominator and Lemma \ref{lem:gcd-counting} for the numerator.  Sending $m\to\infty$ completes the proof of Proposition \ref{prop:Gibbs-measure}.
\iftoggle{arxiv}{}{\qed}
\end{proof}

\subsection{Proof of Corollary \ref{cor:1-sided}}\label{sec:1-sided-corollary}

Let $\tilde X = \{x\in A^\ZZ : x_{[n,\infty)} \in X$ for all $n\in \ZZ\}$ and $\tilde \Sigma = \{\zz\in A_I^\ZZ : \zz_{[n,\infty)} \in \Sigma$ for all $n\in \ZZ\}$. Define $p\colon \tilde X\to X$ by $p(x) = x_{[0,\infty)}$, and $\hat{p}\colon \tilde \Sigma \to \Sigma$ similarly.  Then $p$ is not 1-1 but the induced map $p_* \colon \Ms(\tilde X) \to \Ms(X)$ is an entropy-preserving bijection, and similarly for $\hat{p}_* \colon \MT(\tilde\Sigma) \to \MT(\Sigma)$; see \cite[Proposition 2.1]{CT1} for a proof of this well-known fact.  Now $\tilde\ph = \ph\circ p \in \Ch(\tilde X)$ and $\tilde\Phi = \Phi\circ \hat{p} \in \Ch(\tilde\Sigma)$ are related by $\tilde\Phi = \tilde\ph\circ \tilde\pi$, where $\tilde\pi\colon \tilde \Sigma\to \tilde X$ is the natural 1-block code.  
Because $X$ and $\tilde X$ have the same language, Corollary \ref{cor:2-sided} applies to $(\tilde X,\tilde\ph)$, so
$\tilde\Phi$ is strongly positive recurrent, and there is $P' < P(\tilde\ph)$ such that for every ergodic $\mu\in \Ms(\tilde X)$ with $h(\mu) + \int\ph\,d\mu > P'$, we have $\mu = \pi_* \nu$ for some $\nu\in \MT(\tilde \Sigma)$ with $h(\mu)=h(\nu)$.
Since $Z_n(\Phi,a) = Z_n(\tilde\Phi,a)$, which gives $P_G(\Phi) = P_G(\tilde\Phi)$, and similarly for $Z_n^*$, this implies conclusion \ref{spr} for $\Phi$.  Conclusion \ref{lifts} for $(X,\ph)$ follows since $p_*,\hat{p}_*$ are entropy-preserving bijections (and hence $P(\ph) = P(\tilde\ph)$); existence of an equilibrium state with the Gibbs property for $\ph$ on $\FFF$ follows similarly.

To prove \eqref{eqn:inj}, suppose $\zz,\zz' \in \Sigma$ have $\pi(\zz) = \pi(\zz')$.  Choose $\tilde\zz \in \hat{p}^{-1}(\zz)$ and $\tilde\zz' \in \hat{p}^{-1}(\zz')$.  By Lemma \ref{lem:marking-sets}\ref{lem:piSigma}, $\tilde x = \pi\tilde\zz\in X$ and $\tilde x' = \pi\tilde \zz'\in X$ have bi-infinite $\FFF$-marking sets $J,J'\subset \ZZ$. 
Choose $r\in \NN$ such that $J\cap [0,r) \neq \emptyset$ and $J'\cap [0,r)\neq \emptyset$.  
Because $\tilde x_{[0,\infty)} = \tilde x'_{[0,\infty)}$, $J' \cap [0,\infty)$ is an $\FFF$-marking set for $\tilde x$, and so by Lemma \ref{lem:union-good}, $(J\cup J') \cap [r,\infty)$ is an $\FFF$-marking set for $\tilde x$.  Let $a = \min (J\cap [r,\infty))$, and let $J'' = J \cup (J' \cap [a,\infty))$.  Then $J''$ is a bi-infinite $\FFF$-marking set for $\tilde x$.  By Lemma \ref{lem:bi-inf-union-good}, $J''$ is contained in a bi-infinite maximal $\FFF$-marking set for $\tilde x$, and so  by Lemma \ref{lem:marking-sets}\ref{lem:Jz}, every $j\in J''$ has $\tilde\zz_j = (w,1)$ for some $w\in I$.

A similar argument applies to $\zz'$, so for all sufficiently large $n\in J\cup J'$, there are $w,w'\in I$ with $\abs{w}=\abs{w'}$ such that $\zz_n = (w,1)$ and $\zz'_n=(w',1)$.  Since $I$ is uniquely decipherable by Proposition \ref{prop:11}, this implies that $w=w'$, and we conclude that $\zz_{[n,\infty)} = \zz'_{[n,\infty)}$.  Injectivity on $\Per(\Sigma)$ follows since every $\zz\in \Per(\Sigma)$ is completely determined by $\zz_{[n,\infty)}$ for any $n\in \NN$.

\section{Consequences of strong positive recurrence}
\label{sec:spr-statistics}

In this section we prove Theorem \ref{thm:spr}.  Equality of $P(\ph)$ and $P_G(\Phi)$ is proved in \S\ref{sec:P=}, together with conclusions \ref{unique} and \ref{periodic} on uniqueness and periodic orbits.  The remaining statistical properties are proved in \S\ref{sec:1-sided} for one-sided shifts, and \S\ref{sec:2-sided} for two-sided shifts.  Before proceeding, we recall some well-known facts about the relationship between the one- and two-sided cases.

Let $X$ and $\Sigma$ be two-sided, with alphabets $A$ and $V$, respectively.
As in \S\ref{sec:1-sided-corollary}, define $p\colon A^\ZZ \to A^{\NN\cup \{0\}}$ by $p(x) = x_{[0,\infty)}$, and define $\hat{p} \colon V^\ZZ \to V^{\NN\cup \{0\}}$ similarly.  Let $X^+ = p(X)$ and $\Sigma^+ = \hat{p}(\Sigma)$.  Then $p_* \colon \Ms(X) \to \Ms(X^+)$ and $\hat{p}_* \colon \MT(\Sigma) \to \MT(\Sigma^+)$ are entropy-preserving bijections.
The following result goes back to Sinai \cite[\S3.2]{yS72}; see also  \cite[Lemma 1.6]{rB75}, and
\cite[Theorem 3.1]{yD13} for the countable-state case.

\begin{lemma}\label{lem:cohom}
Given a two-sided countable-state Markov shift $\Sigma$ and $\Psi\in \Ch(\Sigma)$, there is a bounded function $u\in \Ch(\Sigma)$ such that the function $\Psi^+ := \Psi - u + u\circ T \in \Ch(\Sigma)$ only depends on non-negative coordinates; that is, $\Psi^+(\zz) = \Psi^+(\zz')$ whenever $\zz_k = \zz'_k$ for all $k\geq 0$.  The maps $\Psi\mapsto u$ and $\Psi\mapsto \Psi^+$ are linear.
The function $\Psi^+$ can be considered as a function $\Sigma^+\to \RR$, and is H\"older continuous with the same constant and exponent as $\Psi^+\colon \Sigma\to \RR$.  Finally, for any $\zz\in \Sigma$ we have
\begin{equation}\label{eqn:cohom-sum}
\abs{S_n\Psi^+(\hat{p}(\zz)) - S_n\Psi(\zz)} \leq 2\|u\|.
\end{equation}
\end{lemma}

\subsection{Equality of pressures}\label{sec:P=}

\begin{lemma}\label{lem:PGP}
Let $\Sigma$ be a one- or two-sided countable-state Markov shift and $\pi$ a 1-block code from $\Sigma$ to a shift space $X$ on a finite alphabet.  Then for every $\ph\in \Cb(X)$, we have $\Phi = \ph\circ \pi \in \Cb(\Sigma)$ with $\abs{\Phi}_\beta \leq \abs{\ph}_\beta$.  If there is a state $a$ for $\Sigma$ such that $\pi$ is 1-1 on $\Per_n(\Sigma,a)$ for every $n\in \NN$, then $P_G(\Phi) \leq \ulim_{n\to\infty} \frac 1n \log \sum_{x\in \Per_n(X)} e^{S_n\ph(x)} \leq P(\ph)$.
\end{lemma}
\begin{proof}
The estimate on $\abs{\Phi}_\beta$ follows since $\pi$ is a 1-block code and hence does not expand distances.
For the estimate on $P_G(\Phi)$, observe that injectivity of $\pi$ on $\Per_n(\Sigma,a)$ gives
\[
Z_n(\Phi,a)
= \sum_{\zz \in \Per_n(\Sigma,a)} e^{\sum_{k=0}^{n-1} \ph\circ \pi(T^k \zz)}
\leq \sum_{x\in \Per_n(X)} e^{S_n\ph(x)}
\leq \Lambda_n(\ph);
\]
taking logs, dividing by $n$, and sending $n\to\infty$ completes the proof.
\iftoggle{arxiv}{}{\qed}
\end{proof}

\begin{lemma}\label{lem:PG-var}
If $(\Sigma,T)$ is a topologically transitive one- or two-sided countable-state Markov shift and $\Phi\in \Ch(\Sigma)$ satisfies $P_G(\Phi)<\infty$, then
$P_G(\Phi) = \sup \{ h(\nu) + \int \Phi\,d\nu : \nu\in \MT(\Sigma)\}$, and there is at most one $\nu\in \MT(\Sigma)$ that achieves the supremum.
\end{lemma}
\begin{proof}
The one-sided case follows from \cite[Theorems 4 and 7]{oS99} and \cite[Theorem 1.1]{BS03}.\footnote{See also \cite[Theorem 5.5]{oS15}.  Our assumptions here are stronger than Sarig's; in particular, we only consider $\Phi\in \Ch(\Sigma)$, which are required to be bounded above and below, while Sarig considers locally H\"older potentials that may be unbounded, and thus must restrict the supremum to measures satisfying $\int \Phi\,d\nu > -\infty$.  In our setting there is no need to do this.}
In the two-sided case, let $\Phi^+\in \Ch(\Sigma^+)$ be given by Lemma \ref{lem:cohom}.  Then we have
\[
P_G(\Phi) = P_G(\Phi^+) = \sup_{\nu \in \MT(\Sigma)} \bigg( h(\hat{p}_*\nu) + \int \Phi^+ \,d(\hat{p}_*\nu)\bigg)
= \sup_{\nu \in \MT(\Sigma)} \bigg( h(\nu) + \int \Phi\,d\nu \bigg),
\]
where the first equality uses \eqref{eqn:cohom-sum}, the second uses the one-sided result and the fact that $\hat{p}_*$ is a bijection, and the third uses the fact that $\hat{p}_*$ preserves entropy and that $\Phi,\Phi^+$ are cohomologous so they have the same integrals with respect to any invariant measure.
\iftoggle{arxiv}{}{\qed}\end{proof}

\begin{lemma}\label{lem:PGP2}
If $X$, $\Sigma$, $\pi$, $\ph$ are as in the hypothesis of Theorem \ref{thm:spr}, 
then $\Phi = \ph\circ \pi$ has $P_G(\Phi) = P(\ph)$.  Moreover,  $(X,\sigma,\ph)$ 
and $(\Sigma,T,\Phi)$ have unique equilibrium states $\mu$, $m$, related by $\mu = \pi_* m$.
\end{lemma}
\begin{proof}
Observe that $(X,\sigma,\ph)$ has at least one equilibrium state by upper semi-continuity of entropy.   If $\mu$ is any ergodic equilibrium state for $(X,\sigma,\ph)$, then by \ref{lifts} we have $\mu = \pi_* \nu$ for some $\nu\in \MT(\Sigma)$ with $h(\nu) = h(\mu)$.  Along with Lemmas \ref{lem:PGP} and \ref{lem:PG-var}, this gives
\[
P_G(\Phi) \leq P(\ph) = h(\mu) + \int\ph\,d\mu = h(\nu) + \int\Phi\,d\nu \leq P_G(\Phi).
\]
Thus $P_G(\Phi) = P(\ph)$, and $\nu$ achieves the supremum in Lemma \ref{lem:PG-var}.  Since there is at most one $\nu$ achieving this supremum, the result follows.
\iftoggle{arxiv}{}{\qed}\end{proof}

Lemma \ref{lem:PGP2} establishes conclusion \ref{unique} of Theorem \ref{thm:main}.
For the result on periodic orbits, let $\mu_n$ be the measures from \eqref{eqn:mun}.  As in \cite[Theorem 9.10]{pW82}, any weak* limit point of the sequence $\mu_n$ is invariant with $h(\mu) + \int\ph\,d\mu \geq\ulim_{n\to\infty} \frac 1n \log \sum_{x\in \Per_n(X)} e^{S_n\ph(x)} = P(\ph)$, where the last equality follows from Lemmas \ref{lem:PGP} and \ref{lem:PGP2}. This shows that every weak*-limit point of the sequence $\mu_n$ is an equilibrium state for $(X,\ph)$; by uniqueness, this proves \ref{periodic}.

\subsection{One-sided shifts}\label{sec:1-sided}

Now we prove statistical properties for one-sided shifts; see \S\ref{sec:2-sided} for the two-sided case.
Let $\Sigma_0,\dots, \Sigma_{p-1}$ be the disjoint closed sets in the spectral decomposition of $\Sigma$, so that $(\Sigma_i, T^p)$ is a topologically mixing countable-state Markov shift upon recoding by cylinders of length $p$.  Under this recoding, $\Sigma_i$ is equipped with the metric
\[
d_p(\zz,\zz') = e^{-k_p(\zz,\zz')},\qquad
k_p(\zz,\zz') = \min\{\abs{k} : \zz_{[pk,(p+1)k)} \neq \zz'_{[pk,(p+1)k)} \},
\]
relative to which we have the following result for 
 $\Phi':=\sum_{j=0}^{p-1} \Phi\circ T^j$.
\begin{lemma}\label{lem:Phi'}
$\Phi'$ is H\"older continuous in the metric $d_p$, with $\abs{\Phi'}_\beta \leq p e^{p\beta} \abs{\Phi}_\beta$.
\end{lemma}
\begin{proof}
For each $\zz,\zz'$ and $0\leq j < p$ we have $d(T^j\zz,T^j\zz') \leq e^{-(k_p(\zz,\zz')-1)p} \leq e^p d_p(\zz,\zz')^p$, and thus $\abs{\Phi'(\zz) - \Phi'(\zz')} \leq \abs{\Phi}_\beta \sum_{j=0}^{p-1} d(T^j \zz,T^j \zz')^\beta
\leq p e^{p\beta} \abs{\Phi}_\beta d_p(\zz,\zz')^{p\beta}$.
\iftoggle{arxiv}{}{\qed}\end{proof}

For each $i$, $\Phi'\in \Ch(\Sigma_i)$ is  strongly positive recurrent and $(\Sigma_i,T^p)$ is topologically mixing.
Let $m\in \MT(\Sigma)$ be the unique equilibrium state for $(\Sigma,T,\Phi)$ from Lemma \ref{lem:PGP2}; then  $m_i = m|_{\Sigma_i} \cdot p$ is the unique equilibrium state for $(\Sigma_i, T^p, \bar\Phi)$.

To obtain the statistical properties claimed in conclusions \ref{bernoulli}--\ref{analytic},
we use the formulation given by Cyr and Sarig in \cite{CS09}, which is most convenient to our present setting.  What follows could also be done using the machinery of Young towers developed in \cite{lY98,lY99}.

By \cite[Theorem 2.1]{CS09}, strong positive recurrence of $(\Sigma_i,T^d,\Phi')$ implies the \defn{spectral gap property} \cite[Definition 1.1]{CS09}, and then \cite[Theorem 1.1]{CS09} implies the following.
\begin{itemize}
\item \emph{Exponential decay of correlations.}
Given $\beta>0$, there are $\theta\in (0,1)$ and $C>0$ such that for every $\Psi_1\in L^\infty(\Sigma_i,m_i)$, $\Psi_2\in \Cb(\Sigma_i)$ and $n\in \NN$ we have
\begin{equation}\label{eqn:edc-2}
\abs{\int (\Psi_1\circ T^{pn}) \Psi_2 \,dm_i - \int \Psi_1\,dm_i \int \Psi_2\,dm_i} \leq C \|\Psi_1\|_\infty \|\Psi_2\|_\beta \theta^n.
\end{equation}
In \cite{CS09} the upper bound is $K(\Psi_1,\Psi_2)\theta^n$; the bound here uses Lemma \ref{lem:UBP}.
\item \emph{Central limit theorem.}
If $\Psi'\in \Ch(\Sigma_i)$ has $\int \Psi'\,dm_i=0$ and is not equal to $v - v\circ T^p$ for any continuous $v\colon \Sigma_i\to \RR$, then there is $\sigma_{\Psi'}> 0$ such that for all $\tau\in \RR$ we have
\begin{equation}\label{eqn:Psi-clt}
\lim_{n\to\infty} m_i \left\{ \zz\in \Sigma_i \mid \frac 1{\sqrt{n}}  \sum_{k=0}^{n-1} \Psi'(T^{pk} \zz)  \leq \tau \right\}
= 
\frac 1{\sigma_{\Psi'}\sqrt{2\pi}} \int_{-\infty}^\tau e^{-\frac{t^2}{2\sigma_{\Psi'}^2}} \,dt.
\end{equation}
\item \emph{Analyticity of pressure.}  If $\Psi' \in \Cb(\Sigma_i)$, then $t\mapsto P_G^{T^p}(\Phi' + t\Psi')$ is real analytic on a neighbourhood of $0$; we write $P_G^{T^p}$ to emphasize that the quantity here is Gurevich pressure for the action of $T^p$ on the shift whose alphabet is words of length $p$.
\end{itemize}
Now we prove \ref{bernoulli}--\ref{analytic} for $X$.
For \ref{bernoulli}, let $Z_i = \pi(\Sigma_i) \subset X$; these sets may not be disjoint since $\pi$ is not 1-1.  Given $0\leq j < p$, let $X_j := \bigcup_{i=0}^{p-1} Z_i \cap Z_{i+j\pmod p}$.  Then $X_j$ is $\sigma$-invariant and so be ergodicity we have $\mu(X_j)=0$ or $\mu(X_j)=1$ for each $j$.  The set $\{j : \mu(X_j)=1\}$ is closed under addition mod $p$, so $d := \min\{j >0 : \mu(X_j)=1\}$ is a factor of $p$.  Given $0\leq i < d$, let $Y_i = \bigcup_{k=0}^{p/d-1} Z_{i + kd}$; then $\mu(Y_i \cap Y_{i'}) = 0$ for all $i\neq i'$, and $\sigma(Y_i) = Y_{i+1\pmod d}$.  

Note that if \eqref{eqn:inj} is satisfied, then given $\zz,\zz'\in \Sigma$ with $\pi(\zz)=\pi(\zz')$, we have $T^n\zz = T^n\zz'$ for some $n\in \NN$; let $i\in \{0,1,\dots,p-1\}$ be such that $T^n\zz = T^n\zz' \in \Sigma_i$, then $\zz,\zz' \in \Sigma_{i-n\pmod p}$.  
Thus the sets $Z_i$ are all disjoint, and hence $d=p$ when \eqref{eqn:inj} holds.  
Observe that the argument in this paragraph and the previous one works in both the one- and two-sided cases.

Now we restrict our attention to the one-sided case.
For each $0\leq i < d$, let $\mu_i = \frac dp \sum_{k=0}^{p/d-1} \pi_* m_{i + kd}$, so $\mu_i(Y_i)=1$.  Given $\psi_1,\psi_2 \in \Cb(X)$ with $\int \psi_1\,d\mu_i = \int \psi_2\,d\mu_i = 0$, let 
$\Psi_j = \psi_j\circ \pi \in \Cb(\Sigma)$, so
\begin{equation}\label{eqn:Psi-sum}
\abs{\int (\psi_1\circ \sigma^{dn}) \psi_2 \,d\mu_i} 
\leq \sum_{k=0}^{p/d-1} \abs{\int (\Psi_1 \circ T^{dn}) \Psi_2 \,dm_{i+kd}}.
\end{equation}
Write $dn = pq + r$ for $q\in \NN$ and $r\in \{0,1,\dots, p-1\}$; then 
since $m_{i+kd} = T_*^{kd} m_i$, we have
\begin{multline*}
\abs{\int (\Psi_1\circ T^{dn}) \Psi_2 \,dm_{i+kd}}
= \abs{\int (\Psi_1 \circ T^{dk+pq + r}) (\Psi_2\circ T^{dk}) \,dm_i} \\
\leq C\|\Psi_1\circ T^{dk+r}\|_\infty \|\Psi_2\circ T^{dk}\|_\beta \theta^{\frac dp n} 
\leq C\|\Psi_1\|_\infty \|\Psi_2\|_\beta e^{\beta dk} \theta^{\frac dp n},
\end{multline*}
where the first inequality uses \eqref{eqn:edc-2} and the second uses \eqref{eqn:norm-shift} (with $T$ instead of $\sigma$).  Along with \eqref{eqn:Psi-sum}, this gives
$\abs{\int (\psi_1\circ \sigma^{dn}) \psi_2 \,d\mu_i} \leq C\frac pd e^{\beta p} \|\psi_1\|_\infty \|\psi_2\|_\beta \theta^{\frac dp n}$, which proves \ref{bernoulli}.

For the central limit theorem in conclusion \ref{clt}, fix $\psi\in \Cb(\Sigma)$ with $\int \psi\,d\mu = 0$.  Let $\Psi = \psi\circ \pi$ and $\Psi' = S_p \Psi$, so $\int \Psi' \,dm_i = 0$ for each $i$.  

\begin{lemma}\label{lem:cohom-const}
If $\Psi' = v-v\circ T^p$ for some continuous $v\colon \Sigma_i\to \RR$, then $\psi = u - u\circ \sigma$ for some $\mu$-integrable $u\colon X\to\RR$.
\end{lemma}
\begin{proof}
Let $f = \sum_{j=1}^{p-1} j \Psi\circ T^{p-1-j}$; then $f - f\circ T = p\Psi - \Psi'$.  Writing $g = \sum_{j=0}^{p-1} v\circ T^j$ gives $g - g\circ T = v - v\circ T^p = \Psi'$, so $(f+g) - (f+g)\circ T = p\Psi$.
By \cite[Theorem 1.6]{sG06}, $v\in L^1(m)$, so $f+g\in L^1(m)$.  Consider the conditional expectation $h = \mathbb{E}[f+g\mid \pi^{-1}\BBB]$, where $\BBB$ is the Borel $\sigma$-algebra on $X$.  By $(\pi^{-1}\BBB)$-measurability of $\Psi = \psi\circ \pi$, we have
\begin{equation}\label{eqn:h-cobdry}
h-h\circ T = \mathbb{E}[(f+g)-(f+g)\circ T\mid \pi^{-1}\BBB] = \mathbb{E}[p\Psi \mid \pi^{-1}\BBB] = p\Psi.
\end{equation}
Since $h$ is $(\pi^{-1}\BBB)$-measurable, there is $u\in L^1(\mu)$ such that $h = pu\circ \pi$, and \eqref{eqn:h-cobdry} gives $u-u\circ \sigma = \psi$ $\mu$-a.e\ (using the fact that $\mu(\pi\Sigma)=1$).  Modifying $u$ on a null set gives the coboundary relationship everywhere.
\iftoggle{arxiv}{}{\qed}\end{proof}

By Lemma \ref{lem:cohom-const}, if $\psi$ is not cohomologous to a constant, then $\Psi' \neq v-v\circ T^p$ for any continuous $v$, so \eqref{eqn:Psi-clt} holds for some $\sigma_{\Psi'}>0$.  Using
$\mu = \pi_* m = \frac 1p\sum_{i=0}^{p-1} \pi_* m_i$, we get
\begin{multline}\label{eqn:cltmi}
\mu\bigg\{ x : \frac 1{\sqrt{pn}} S_{pn} \psi(x) \leq \tau \bigg\}
= \frac 1p \sum_{i=0}^{p-1} m_i \bigg\{\zz : \frac 1{\sqrt{pn}} \sum_{k=0}^{n-1} \Psi'(T^{pk}\zz) \leq \tau \bigg\}
\\
\to \frac 1{\sigma_{\Psi'}\sqrt{2\pi}} \int_{-\infty}^{\tau\sqrt{p}} e^{-\frac{t^2}{2\sigma_{\Psi'}^2}}\,dt
= \frac {\sqrt p}{\sigma_{\Psi'}\sqrt{2\pi}} \int_{-\infty}^{\tau} e^{-\frac{s^2p}{2\sigma_{\Psi'}^2}}\,ds,
\end{multline}
where the last equality uses the change of variables $s=t/{\sqrt{p}}$.
By comparing $\frac 1{\sqrt{pn}} S_{pn}\psi$ to $\frac 1N S_N\psi$ for $pn \leq N < p(n+1)$ and using continuity in $\tau$ of the last expression in \eqref{eqn:cltmi}, this implies the central limit theorem for $\psi$ with $\sigma_\psi = \sigma_{\Psi'}/{\sqrt{p}}$, completing the proof of \ref{clt}.

Finally, for analyticity 
we start by observing that given $\Psi\in \Ch(\Sigma)$, Lemma \ref{lem:PG-var} gives $P_G^{T^p}(S_p \Psi) = p\cdot P_G^T(\Psi)$, and thus $t\mapsto P_G(\Phi + t\Psi)$ is real analytic on a neighborhood of $0$.
Now observe that 
injectivity of $\pi$ on $\Per_n(\Sigma,a)$
does not depend on $\ph$, and that properties \ref{spr} and \ref{lifts} are stable under uniformly small perturbations.  More precisely, if $\psi\in \Ch(X)$ has 
$2\|\psi\| < P_G(\Phi) - \ulim \frac 1n \log Z_n^*(\Phi,a)$, then for $\Psi = \psi\circ \pi$ we have
\[
\ulim \frac 1n \log Z_n^*(\Phi+ \Psi,a) \leq \|\psi\| + \ulim \frac 1n \log Z_n^*(\Phi,a) <  P_G(\Phi) - \|\psi\| \leq P_G(\Phi+\Psi),
\]
so $\Phi + \Psi$ is strongly positive recurrent as well.  Similarly, if $P' < P(\ph)$ is as in \ref{lifts} and $2\|\psi\| \leq P(\ph) - P'$, then every ergodic $\mu\in \Ms(X)$ with $h(\mu) + \int (\ph + \psi)\,d\mu > P' + \|\psi\|$ has $h(\mu) + \int \ph\,d\mu > P'$, and thus $\mu = \pi_*\nu$ for some $\nu\in \MT(\Sigma)$ with $h(\nu)=h(\mu)$; since $P' + \|\psi\| < P(\ph) - \|\psi\| \leq P(\ph + \psi)$, this shows that $\ph+\psi$ satisfies \ref{lifts}.  By Lemma \ref{lem:PGP2}, we conclude that for all $\psi \in \Ch(X)$, there is $\eps>0$ such that $P_G(\Phi + t \Psi) = P(\ph + t\psi)$ when $\abs{t}<\eps$, hence $t\mapsto P(\ph + t\psi)$ is real analytic on a neighborhood of $0$, proving \ref{analytic}.

\subsection{Two-sided shifts}\label{sec:2-sided}

Now suppose that $X$ is a two-sided shift space satisfying the hypotheses of Theorem \ref{thm:spr} for some two-sided countable-state Markov shift $\Sigma$, 1-block code $\pi\colon \Sigma\to X$, and $\ph \in \Ch(X)$.
Let $\Phi = \ph\circ \pi \in \Ch(\Sigma)$, and let $\Phi^+ \in \Ch(\Sigma^+)$ be given by Lemma \ref{lem:cohom}.  Since $\Phi$ is strongly positive recurrent, it follows from \eqref{eqn:cohom-sum} that $\Phi^+$ is strongly positive recurrent as well.  Let $\mu$ and $m$ be the unique equilibrium states for $(X,\sigma,\ph)$ and $(\Sigma,T,\Phi)$ from Lemma \ref{lem:PGP2}, and let $m^+ = \hat{p}_* m$, so that $m^+$ is the unique equilibrium state for $(\Sigma^+,T,\Phi^+)$.  Finally, let $\pi^+ \colon \Sigma^+ \to X^+$ be the 1-block code defined by the same map $V\to A$ as $\pi$; then $\pi^+ \circ \hat{p} = p\circ \pi$, and so $\mu^+ := \pi^+_* m^+ = p_* \mu$.

We prove \ref{bernoulli}--\ref{analytic} assuming that $\Sigma^+$ is topologically mixing; the extension to the topologically transitive case is just as above.  Given mixing, the previous section gives $C>0$ such that for every $\psi^+_1,\psi^+_2\in \Cb(X^+)$ with $\int \psi^+_1\,d\mu = \int\psi^+_2\,d\mu = 0$, we have
\begin{equation}\label{eqn:1-edc}
\abs{\int (\psi^+_1 \circ \sigma^n) \psi^+_2\,d\mu^+ } \leq C \| \psi^+_1\| \|\psi^+_2\|_\beta \theta^n
\text{ for all } n\geq 0.
\end{equation}
We deduce exponential decay of correlations for the two-sided shift $(X,\sigma,\mu)$ following \cite[Proposition 2.4]{PP90} (see also  \cite[\S 4]{lY98}).  Given $\psi_1,\psi_2\in \Cb(X)$ with $\int \psi_1\,d\mu = \int \psi_2\,d\mu = 0$ and $k\in \NN$, we approximate $\psi_i$ with functions $\psi_i^k\in \Cb(X)$ that depend only on the coordinates $-k,\dots,k$, as follows.  Choose any function $\xi \colon \LLL_{2k+1} \to X$ such that $\xi(w)_i = w_{i+k+1}$ for all $\abs{i} \leq k$; then define $\zeta_k\colon X\to X$ by $\zeta_k(x) = \xi(x_{[-k,k]})$.  Finally, put $\psi_i^k = \psi_i \circ \zeta_k$.  Observe that $|\psi_i^k|_\beta \leq \abs{\psi_i}_\beta$ since $d(\zeta_k x,\zeta_k y) \leq d(x,y)$ for all $x,y\in X$.  Moreover, $\|\psi_i - \psi_i^k\| \leq \abs{\psi}_\beta e^{-\beta k}$.

Since $\psi_i^k (\sigma^k x)$ only depends on $x_{[0,\infty)} = p(x)$, we can treat $\psi_i^k \circ \sigma^k$ as a function on $X^+$, and thus for every $n\geq 0$, \eqref{eqn:1-edc} and \eqref{eqn:norm-shift} give
\begin{equation}\label{eqn:edc-k}
\abs{\int (\psi_1^k\circ \sigma^n) \psi_2^k \,d\mu}
= \abs{\int ((\psi_1^k \circ \sigma^k) \circ \sigma^n) (\psi_2^k \circ \sigma^k) \,d\mu}
\leq C \|\psi_1\| \|\psi_2 \|_\beta e^{\beta k}\theta^n.
\end{equation}
Thus by writing
\[
(\psi_1\circ \sigma^n) \psi_2 = (\psi_1 \circ \sigma^n - \psi_1^k\circ \sigma^n)\psi_2 
+ (\psi_1^k\circ \sigma^n)(\psi_2 - \psi_2^k)
+ (\psi_1^k\circ \sigma^n)(\psi_2^k)
\]
and integrating, we deduce that
\begin{align*}
\abs{\int (\psi_1\circ \sigma^n) \psi_2 \,d\mu}
&\leq \| \psi_1 - \psi_1^k \| \|\psi_2\| + \|\psi_1\| \| \psi_2 - \psi_2^k\| + C \|\psi_1\| \|\psi_2\|_\beta e^{\beta k} \theta^n \\
&\leq \|\psi_1\|_\beta \|\psi_2\|_\beta (2 e^{-\beta k} + C e^{\beta k} \theta^n).
\end{align*}
Fix $\gamma>0$ small enough that $\beta \gamma + \log\theta < 0$, and given $n\in \NN$, choose $k\in \NN$ with $\gamma n/2 \leq k \leq \gamma n$; then the above estimate gives
\[
\abs{\int (\psi_1\circ \sigma^n) \psi_2 \,d\mu}
\leq \|\psi_1\|_\beta \|\psi_2\|_\beta (2 e^{-\beta \gamma n/2} + K e^{(\beta \gamma + \log \theta)n} ),
\]
which proves exponential decay of correlations for $(X,\sigma,\mu)$.


For the central limit theorem, consider $\psi\in \Cb(X)$ with $\int\psi\,d\mu=0$.  Let $\Psi=\psi\circ \pi\in \Cb(\Sigma)$, and let $\Psi^+, u$ be as in Lemma \ref{lem:cohom}.  
As in the previous section, if $\psi$ is not cohomologous to a constant then neither is $\Psi^+$.
By \eqref{eqn:cohom-sum} we have
\begin{equation}\label{eqn:cohom-sum-2}
\abs{\frac 1{\sqrt{n}} \sum_{k=0}^{n-1} \Psi^+(T^k\hat{p}(\zz)) - \frac 1{\sqrt{n}} \sum_{k=0}^{n-1} \Psi(T^k \zz)} \leq \frac{2\|u\|}{\sqrt{n}}.
\end{equation}
Write $G_n^\mu(\tau) = \mu \{x\in X \mid \frac 1{\sqrt{n}} \sum_{k=0}^{n-1} \psi(\sigma^k x) \leq \tau\}$, and similarly for $G_n^m(\tau)$ (summing over $T$-orbits on $\Sigma^+$).  Then \eqref{eqn:cohom-sum-2} gives
$
G_n^m\left(\tau - \frac{2\|u\|}{\sqrt{n}}\right) \leq G_n^\mu(\tau) \leq  G_n^m\left(\tau + \frac{2\|u\|}{\sqrt{n}}\right),
$
and thus $G_n^\mu(\tau)$ converges to the right-hand side of \eqref{eqn:Psi-clt} (this uses continuity of that expression). 

For analyticity, we observe that the linearity in Lemma \ref{lem:cohom} gives $(\Phi + t\Psi)^+ = \Phi^+ + t\Psi^+$, so $P_G(\Phi + t\Psi) = P_G(\Phi^+ + t\Psi^+)$ is analytic in $t$ near $0$, and as in the one-sided case we have $P(\ph + t\psi) = P_G(\Phi + t\Psi)$ for $\abs{t}$ sufficiently small, proving conclusion \ref{analytic} of Theorem \ref{thm:main}.

Finally, it follows from \cite{yD13} that $(\Sigma,T,m)$ is Bernoulli up to a period, and thus its factor $(X,\sigma,\mu)$ is also Bernoulli up to a period by \cite{dO70}.

\section{Proof of Theorem \ref{thm:get0spec}}\label{sec:get0spec}

Now we turn our attention to Theorem \ref{thm:get0spec} and assume that we have $\GGG\subset \LLL$ satisfying \ref{spec}, \ref{gap}, \ref{inter}, \ref{union}.  We must produce $\FFF \subset \LLL$ satisfying \ref{free}, \ref{gap-2}, and \ref{overlaps}, such that $\mu$ has the Gibbs property for $\ph$ on $\GGG$ if and only if $\mu$ has the Gibbs property for $\ph$ on $\FFF$.  We first dispense with a trivial case.  We say that $\GGG\subset \LLL$ is \defn{periodic} if there is a periodic sequence $x\in X$ such that every $w\in \GGG$ appears somewhere in $x$.

\begin{proposition}\label{prop:periodic}
If $(X,\ph)$ is such that there is a periodic $\GGG\subset \LLL$ satisfying \ref{spec} and \ref{gap}, then there is $\FFF \subset \LLL$ satisfying \ref{free}, \ref{gap-2}, and \ref{overlaps}.
\end{proposition}
\begin{proof}
Let $x\in X$ be periodic such that every $w\in \GGG$ appears in $x$.  Let $d\in \NN$ be the least period of $x$ and let $\FFF = \{x_{[1,kd]} \mid k\in \NN\}$.  Then $\FFF$ has \ref{free} and \ref{overlaps} (the second assertion uses the fact that $d$ is minimal).  Let $\Cp,\Cs$ be given by \ref{gap} and put $\Ep =\Cp \LLL_{\leq d} \cap \LLL$, $\Es = \LLL_{\leq d} \Cs \cap \LLL$.
Then given any $u^p\in \Cp, v\in \GGG, u^s\in \Cs$, we note that there are $i\in [1,d]$ and $j\in (\abs{v}-d,\abs{v}]$ such that $v_{[i,j]}\in \FFF$, and hence
$
u^p v u^s = (u^p v_{[1,i)}) v_{[i,j]} (v_{(j,\abs{v}]} u^s) \in \Ep \FFF \Es
$.
Together with the observation that $P(\Ep,\ph) = P(\Cp,\ph)$, $P(\Es,\ph) = P(\Cs,\ph)$, and $I=\FFF\setminus \FFF\FFF$ is finite, this establishes \ref{gap-2}.
\iftoggle{arxiv}{}{\qed}\end{proof}

For non-periodic $\GGG$, the key to producing $\FFF$ satisfying \ref{free} is the following definition.

\begin{definition}\label{def:triple}
Given $\GGG\subset \LLL$ satisfying \ref{spec},
we say that $(r,c,s)$ is a \defn{synchronising triple} for $\GGG$ if $r,s\in \GGG$, $c\in \LLL_{\leq \tau}$, and given any $r'\in \LLL r \cap \GGG$ and $s'\in s\LLL\cap \GGG$, we have $r'cs'\in \GGG$.
In this case we write 
$\FFF^{r,c,s} = c(\LLL r \cap s\LLL \cap \GGG)$.\footnote{Note that this is not the same thing as defining $\FFF^{r,c,s}$ to be all words in $\GGG$ that are of the form $csxr$, since we allow $s$ and $r$ to overlap.}
\end{definition}

\begin{figure}[htbp]
\includegraphics[width=220pt]{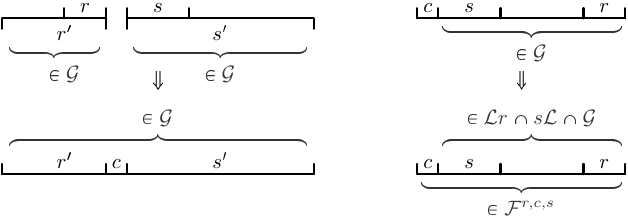}
\caption{A synchronising triple $(r,c,s)$ and the collection 
$\FFF^{r,c,s}$ it generates.}
\label{fig:sync-trip}
\end{figure}

Theorem \ref{thm:get0spec} follows immediately from Proposition \ref{prop:periodic} and the following two results, which we prove in \S\ref{sec:get-free} and \ref{sec:get-gap}, respectively.


\begin{proposition}\label{prop:get-free-2}
Every $\GGG\subset \LLL$ satisfying \ref{spec} has a synchronising triple $(r,c,s)$, and both $r$ and $s$ can be chosen to be arbitrarily long.  In addition, we have:
\begin{enumerate}[leftmargin=*, widest=d, label=\textup{(\alph{*})}]
\item\label{a} If $(r,c,s)$ is any synchronising triple for $\GGG$, then $\FFF^{r,c,s}$ satisfies \ref{free}.
\item\label{b} If $(r,c,s)$ is any synchronising triple for $\GGG$, then a measure $\mu$ is Gibbs for $\ph$ on $\GGG$ if and only if it is Gibbs for $\ph$ on $\FFF^{r,c,s}$.
\item\label{c} If $\GGG$ is not periodic, then it has a synchronising triple $(r,c,s)$ with $r,s$ arbitrarily long and 
\begin{equation}\label{eqn:short-overlaps}
[rcs] \cap \sigma^{-k}[rcs] = \emptyset \text{ for every }1\leq k\leq \max\{\abs{rc},\abs{cs}\}.
\end{equation}
If in addition $\GGG$ satisfies \ref{s-spec} for some $\tau\in \NN$, then we can take $\abs{c}=\tau$ and get $\gcd\{ \abs{w} \mid w\in \FFF^{r,c,s}\} = \gcd \{ \abs{v} + \tau \mid v\in \GGG\}$.
\item\label{d} If $\GGG$ satisfies \ref{inter} for some $L\in \NN$, and $(r,c,s)$ is any synchronising triple satisfying \eqref{eqn:short-overlaps} and  $\abs{r},\abs{s}\geq L$, then $\FFF^{r,c,s}$ satisfies \ref{overlaps}.
\end{enumerate}
\end{proposition}


\begin{proposition}\label{prop:get-gap}
Let $X$ be a shift space on a finite alphabet and $\ph\in \Cb(X)$ for some $\beta>0$.  Suppose $\GGG\subset \LLL(X)$ satisfies \ref{spec}, \ref{gap}, \ref{inter}, \ref{union}.  If $(r,c,s)$ is any synchronising triple for $\GGG$ satisfying \eqref{eqn:short-overlaps} and $\abs{r},\abs{s}\geq L$, where $L$ is large enough for both \ref{inter} and \ref{union} to hold, then $\FFF = \FFF^{r,c,s} = c(s\LLL \cap \LLL r \cap \GGG)$ satisfies \ref{gap-2}:  for $I=\FFF\setminus\FFF\FFF$ we have $P(I,\ph) < P(\ph)$, and there are $\Ep,\Es\subset \LLL$ such that $P(\Ep \cup \Es \cup (\LLL\setminus \Ep \FFF \Es),\ph)<P(\ph)$.
\end{proposition}

\subsection{Producing a collection of words with free concatenation}\label{sec:get-free}

In this section we prove Proposition \ref{prop:get-free-2}.  We start by establishing existence of arbitrarily long synchronising triples in \S\ref{sec:sync-trip}.  In \S\ref{sec:free-gibbs} we prove parts \ref{a} and \ref{b}.  In \S\ref{sec:short-overlaps} we prove part \ref{c}.  In \S\ref{sec:overlaps} we prove \ref{d}.

\subsubsection{Existence of a synchronising triple}\label{sec:sync-trip}

The following lemma mimics the proof from \cite{aB88} that specification implies synchronised. 

\begin{lemma}\label{lem:reset}
Suppose $\GGG$ satisfies \ref{spec}.  Then given $v,w\in \GGG$, there are $q \in v \LLL \cap \GGG$, $p\in \LLL w \cap \GGG$, and $c\in \LLL_{\leq\tau}$ such that $(p,c,q)$ is a synchronising triple for $\GGG$.
If in addition $\GGG$ satisfies \ref{s-spec} for some $\tau\in\NN$, then $(p,c,q)$ can be chosen so that $\abs{c}=\tau$.
\end{lemma}
\begin{proof}
Define a partial order on $\GGG\times \GGG$ by writing $(v,w) \succ (v',w')$ if $v\in v' \LLL$ and $w\in \LLL w'$.
Given $v,w\in \GGG$, let $C(w,v) = \{ c\in \LLL_{\leq \tau} : w'cv' \in \GGG$ for all $(v',w') \prec (v,w)\}$.  It follows from the definition that $C(\bar w, \bar v) \subset C(w,v)$ whenever $(\bar v,\bar w) \succ (v,w)$.
By \ref{spec}, $C(w,v)$ is nonempty for all $v,w\in \GGG$.\footnote{This is the only place in the paper where we require the conclusion of \ref{spec} for $v'\neq v$ and $w'\neq w$; everywhere else, it would suffice to only require $vuw\in \GGG$ in \ref{spec}.}
Since $C(w,v)$ is finite, there is a pair $(p,q) \in \GGG\times \GGG$ such that $(p,q) \succ (v,w)$ and $C(\bar w, \bar v) = C(q,p)$ for all $(\bar v,\bar w) \succ (p,q)$, which implies that $(p,c,q)$ is a synchronising triple for $\GGG$.  For the claim about the case when $\GGG$ satisfies \ref{s-spec}, it suffices to replace $\LLL_{\leq\tau}$ with $\LLL_\tau$ in the definition of $C(w,v)$ and then repeat this argument.
\iftoggle{arxiv}{}{\qed}\end{proof}


Note that $p,q$ can be taken arbitrarily long by choosing long words $v,w$ in the lemma.

\subsubsection{Free concatenation and Gibbs properties}\label{sec:free-gibbs}

Now we take an arbitrary synchronising triple $(p,c,q)$ and let $\FFF = \FFF^{p,c,q} = c(\LLL p \cap q\LLL \cap \GGG)$.  Given any $v,w\in \FFF$ there are $v',w'\in \LLL p \cap q\LLL \cap \GGG$ such that $v=cv'$, $w=cw'$, and since $(p,c,q)$ is a synchronising triple we have $v'cw'\in \LLL p \cap q\LLL \cap \GGG$ as well, so $vw=cv'cw' \in \FFF$. Thus $\FFF$ satisfies \ref{free}.

For part \ref{b} of Proposition \ref{prop:get-free-2}, we show that a measure $\mu$ has the Gibbs property for $\ph$ on $\FFF$ if and only if it has the Gibbs property for $\ph$ on $\GGG$.
Note that the upper bound in \eqref{eqn:gibbs} is required to hold for all $w$, so it suffices to check the lower bound.

Suppose $\mu$ is Gibbs for $\ph$ on $\GGG$ with constant $Q_1$, and $(p,c,q)$ is a synchronising triple.  Then we have $pw\in \GGG$ for each $w\in \FFF^{p,c,q}$, and in particular
\[
\mu[w] \geq \mu[pw] \geq Q_1^{-1} e^{-\abs{pw}P(\ph) + \hat\ph(pw)}
\geq Q_1^{-1} e^{-\abs{w} P(\ph) + \hat\ph(w)} e^{-\abs{p}(P(\ph) + \|\ph\|)},
\]
so $\mu$ is Gibbs for $\ph$ on $\FFF^{p,c,q}$.  Conversely, if $\mu$ is Gibbs for $\ph$ on $\FFF^{p,c,q}$ then given $w\in \GGG$ there are $u,v\in \LLL_{\leq \tau}$ such that $cquwvp\in c(q \LLL \cap \LLL p \cap \GGG) = \FFF^{p,c,q}$, and hence
\begin{align*}
\mu[w] &\geq \mu[cquwvp] \geq Q_1^{-1} e^{-\abs{cquwvp}P(\ph) + \hat\ph(cquwvp)} \\
&\geq Q_1^{-1} e^{-(\abs{cq} + \abs{p} + 2\tau)(P(\ph)+\|\ph\|)}
e^{-\abs{w} P(\ph) + \hat\ph(w)},
\end{align*}
so $\mu$ is Gibbs for $\ph$ on $\GGG$.  This completes the proof of Proposition \ref{prop:get-free-2}\ref{a}--\ref{b}.


\subsubsection{A synchronising triple with no long overlaps}\label{sec:short-overlaps}

Now we prove Proposition \ref{prop:get-free-2}\ref{c}.  Assume that $\GGG\subset \LLL$ satisfies \ref{spec} and is not periodic; this has the following consequence.\footnote{A very similar result is proved in \cite[Proposition 2.4]{CT1}; see \S6.3 there.  Our hypotheses here are different; neither set of conditions implies the other.}

\begin{lemma}\label{lem:good-entropy}
Suppose $\GGG$ has \ref{spec} and is not periodic, and let $(p,c,q)$ be 
a synchronising triple for $\GGG$.  Then there is $\ell\in \NN$ such that $\#\GGG_{\ell m - \abs{c}} \geq 2^m$ for all $m\in \NN$.
\end{lemma}
\begin{proof}
Let $\FFF = \FFF^{p,c,q}$.
First we show that there are $v,w\in \FFF$ such that $\abs{w}\geq \abs{v}$ and $w\notin v\LLL$. Suppose this was false; 
then we can define $x\in A^\NN$ by $x_i = w_i$ whenever $w\in \FFF_{\geq i}$.  This is well-defined since $v_i=w_i$ whenever $v,w\in \FFF_{\geq i}$.  Moreover, given $v\in \FFF$ we have $vv\cdots v\in \FFF$ for arbitrarily long concatenations of $v$ with itself, so $x_{i+k\abs{v}} = v_i = x_i$ for any $k\geq 0$ and $1\leq i \leq \abs{v}$.  It follows that $x$ is periodic, and that every $v\in \FFF$ appears as a prefix of $x$.  But then $x$ contains every $w\in \GGG$ as well, since as in \S\ref{sec:free-gibbs} there are $u,v\in \LLL_{\leq \tau}$ such that $cquwvp\in \FFF$.

By the previous paragraph there are  $v,w\in \FFF$ such that $v_i \neq w_i$ for some $i\leq \min(\abs{v},\abs{w})$.  Let $\ell = \abs{v}\cdot \abs{w}$; let $u^1 = vv\cdots v$ and $u^2=ww\cdots w$, where we concatenate $\abs{w}$ copies of $v$ and $\abs{v}$ copies of $w$ so that $\abs{u^1} = \abs{u^2} = \ell$.  By construction of $v,w$ we have $u^1\neq u^2$.  Now for every $m\in \NN$ and $y\in \{1,2\}^m$, we have $u^{y_1} \cdots u^{y_m}\in \FFF_{m\ell}$.  Moreover, different choices of $y$ yield different words in $\FFF_{m\ell}$, so that $\#\FFF_{m\ell} \geq 2^m$.  Since $\FFF \subset c\GGG$, we get $\#\GGG_{m\ell-\abs{c}} \geq 2^m$.
\iftoggle{arxiv}{}{\qed}\end{proof}

Now let $(p,c,q)$ be a synchronising triple for $\GGG$ with $\abs{p},\abs{q}\geq L$, where $L$ is the constant from \ref{inter}.  Let $\ell$ be as in Lemma \ref{lem:good-entropy}, and choose $\alpha>0$ such that $\alpha \ell \log(\#A) < \log 2$, where we recall that $A$ is the alphabet.
There are arbitrarily large $m,n\in \NN$ such that
\begin{equation}\label{eqn:mn}
\frac \ell\alpha m + \frac{2\tau + \abs{pcq}}\alpha \leq \ell n - \abs{c} < 2^m.
\end{equation}
Say that a word $w\in \LLL$ is \defn{$k$-periodic} if
\begin{equation}\label{eqn:w-periodic}
w_{i+k} = w_i \text{ for every } 1\leq i\leq \abs{w} - k.
\end{equation}

\begin{lemma}\label{lem:v-not-in-w}
For sufficiently large $m,n$ satisfying \eqref{eqn:mn}, there are $v\in \GGG_{\ell m - \abs{c}}$ and $w\in \GGG_{\ell n - \abs{c}}$ such that $v$ is not a subword of $w$, and $w$ is not $k$-periodic for any $1\leq k \leq \alpha\abs{w}$.
\end{lemma}
\begin{proof}
Consider the collection $\PPP^\alpha = \{ w\in \LLL \mid w$  is $k$-periodic for some $1\leq k\leq \alpha\abs{w}\}$.
If $w$ is $k$-periodic then it is determined by its first $k$ entries, so 
\begin{equation}\label{eqn:Dn}
\#\PPP_N^\alpha \leq \sum_{k=1}^{\lfloor \alpha N \rfloor} (\#A)^k
\leq (\#A)^{\alpha N} \sum_{j=0}^\infty (\# A)^{-j} = e^{\alpha \log(\# A) N} \left(\frac {\#A}{\#A - 1}\right).
\end{equation}
Write $\gamma := (\log 2)/\ell - \alpha \log\#A$ and note that $\gamma>0$ by the choice of $\alpha$.  When $N=\ell n - \abs{c}$ for some $n\in \NN$, Lemma \ref{lem:good-entropy} gives $\#\GGG_N \geq 2^n \geq 2^{N/\ell}$, and so 
\begin{equation}
\frac{\#\GGG_N}{\#\PPP_N^\alpha}
\geq \left(\frac {\#A - 1}{\#A}\right)
e^{(\log 2)\frac{N}\ell - \alpha \log(\# A)N}
\geq \left(\frac {\#A - 1}{\#A}\right) e^{\gamma N}.
\end{equation}
For $n$ sufficiently large this gives $\#\GGG_N > \#\PPP_N^\alpha$, so there is $w\in \GGG_N = \GGG_{\ell n - \abs{c}}$ that is not $k$-periodic for any $1\leq k \leq \alpha \abs{w}$.  
To put it another way: for every sufficiently large 
$n$ there is $w\in \GGG_{\ell n - \abs{c}}$ such that
\begin{equation}\label{eqn:w-not-periodic}
\text{for every $1\leq k\leq \alpha\abs{w}$ there is $1\leq j\leq \abs{w} - k$ with $w_{k + j} \neq w_k$}.
\end{equation}
Now let $m,n\in \NN$ be such that \eqref{eqn:mn} is satisfied and \eqref{eqn:w-not-periodic} holds for some $w\in \GGG_{\ell n - \abs{c}}$.  Note that $w$ contains at most $\abs{w}$ subwords of length $\ell m - \abs{c}$, while $\#\GGG_{\ell m - \abs{c}} \geq 2^m > \abs{w}$ by Lemma \ref{lem:good-entropy} and \eqref{eqn:mn}.  Thus there is $v\in \GGG_{\ell m - \abs{c}}$ such that $w_{[i,i+\abs{v})} \neq v$ for every $1\leq i\leq \abs{w}-\abs{v}$; that is, $v$ is not a subword of $w$.
\iftoggle{arxiv}{}{\qed}
\end{proof}

Let $v,w$ be the words provided by Lemma \ref{lem:v-not-in-w}.  By \ref{spec} there are $u,u'\in \LLL_{\leq \tau}$ such that $r=vup\in \GGG$ and $s=qu'w\in \GGG$.  Then $(r,c,s)$ is again a synchronising triple for $\GGG$; we show that it satisfies condition \eqref{eqn:short-overlaps}, which can be thought of as forbidding `long overlaps' of $rcs$ with itself.  Suppose that $k>0$ is such that there is $x\in[rcs] \cap \sigma^{-k}[rcs]$.  Then we have
\begin{equation}\label{eqn:xrcs}
x_{[0,\abs{rcs})} = x_{[k,k+\abs{rcs})} = rcs = vupcqu'w.
\end{equation}
Figure \ref{fig:short-overlaps} illustrates the three possible ranges of $k$ that we must deal with:
\begin{enumerate}
\item $1\leq k \leq \abs{vupcqu'}$, so $v$ starts in $x$ before $w$ starts in $\sigma^k(x)$; 
\item $\abs{vupcqu'} < k < \abs{upcqu'w} = \abs{rcs} - \abs{v}$, so $v$ is a subword of $w$;
\item $k\geq \abs{upcqu'w} \geq \max(\abs{vupc},\abs{cqu'w})$.
\end{enumerate}

\begin{figure}[htbp]
\includegraphics[width=280pt]{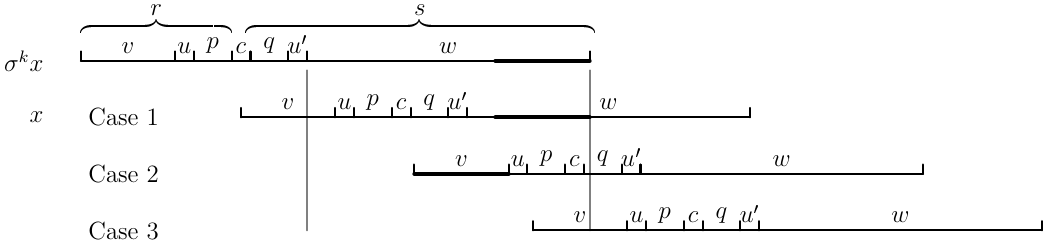}
\caption{Cases 1 and 2 are forbidden by our choice of $v,w$.  Case 3 is permissible.}
\label{fig:short-overlaps}
\end{figure}

The first half of \eqref{eqn:mn} gives $\abs{vupcqu'} \leq \ell m + 2\tau + \abs{pcq} \leq \alpha(\ell n - \abs{c}) = \alpha\abs{w}$.  In particular, if Case 1 occurs then we have $w_{[1,\abs{w}-k]} = w_{[k+1,\abs{w}]}$ for some $k\leq \abs{vupcqu'} \leq \alpha\abs{w}$, so $w$ is $k$-periodic, contradicting Lemma \ref{lem:v-not-in-w}.  Thus Case 1 does not occur.  Case 2 is forbidden since by Lemma \ref{lem:v-not-in-w}, $v$ does not appear as a subword of $w$.  Thus \eqref{eqn:xrcs} is not satisfied for any $1\leq k\leq \abs{rcs} - \abs{v}$.  Since $\abs{rc} \leq \abs{cs} = \abs{rcs} - \abs{r} \leq \abs{rcs} - \abs{v}$, this proves \eqref{eqn:short-overlaps}.

For the final claim in part \ref{c}, note that when $\GGG$ satisfies \ref{s-spec} for some $\tau\in\NN$, Lemma \ref{lem:reset} gives a synchronising triple $(p,c,q)$ with $\abs{c}=\tau$.  As above, this extends to a synchronising triple $(r,c,s)$ satisfying \eqref{eqn:short-overlaps}.
Let $F = \{\abs{w} : w\in \FFF^{r,c,s}\} \subset \NN$ and $G = \{\abs{w} : w\in \GGG\} \subset \NN$.  Clearly $F \subset \abs{c} + G = \tau+G$, so $\gcd(F)$ is a multiple of $d := \gcd(\tau+G)$.
On the other hand, $\tau + G \subset \NN$ is closed under addition by \ref{s-spec}, so there is $N\in \NN$ such that $d \NN  \cap [N,\infty) \subset \tau + G$.  For every $w\in \GGG$ we have $csuwvr\in \FFF^{r,c,s}$ for some $u,v\in \LLL_\tau$, and so $F \supset \tau + G + (2\tau + \abs{r} + \abs{s}) \supset d \NN \cap [N+2\tau + \abs{r}+\abs{s},\infty)$, which gives $\gcd(F) = d$.

\subsubsection{Absence of long overlaps implies \ref{overlaps}}\label{sec:overlaps}

Now we prove part \ref{d} of Proposition \ref{prop:get-free-2}.
Let $(r,c,s)$ be a synchronising triple for $\GGG$ satisfying \eqref{eqn:short-overlaps}.  We show that
\[
\FFF := \FFF^{r,c,s} = c\BBB^{r,s} = c(s\LLL \cap \LLL r \cap \GGG)
\]
satisfies \ref{overlaps} if $\GGG$ satisfies \ref{inter}.  Note that $\FFF$ satisfies \ref{free} by part \ref{a}.

\begin{figure}[htbp]
\includegraphics[width=200pt]{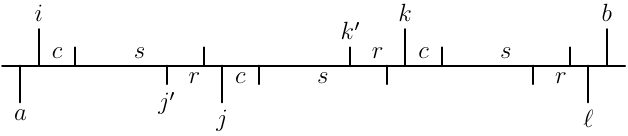}
\caption{Establishing \ref{overlaps}.}
\label{fig:get-overlaps}
\end{figure}

Suppose $x\in X$ and $i\leq j < k \leq \ell$ are integers such that $x_{[i,k)},x_{[j,\ell)}\in \FFF$ and there are $a<j$ and $b>k$ such that $x_{[a,j)},x_{[k,b)}\in \FFF$ (see Figure \ref{fig:get-overlaps}).  We must show that $x_{[j,k)}\in \FFF$.
Let $j' = j-\abs{r}$ and $k'=k-\abs{r}$; then we have
\[
x_{[j',j'+\abs{rcs})} = x_{[k',k'+\abs{rcs})} = rcs,
\]
so  \eqref{eqn:short-overlaps} gives $k-j = k'-j' \geq \max(\abs{rc},\abs{cs})$.  Thus $x_{[j,j+\abs{c})} = c$ and $x_{[j+\abs{c},k)} \in s\LLL\cap \LLL r$.  

It remains only to show that $x_{[j+\abs{c},k)}\in \GGG$.  For this we observe that $x_{[i,k)}\in \FFF$ implies $x_{[i+\abs{c},k)}\in \GGG$, and $x_{[j,\ell)}\in \FFF$ implies $x_{[j+\abs{c},\ell)}\in \GGG$.  Note that $i+\abs{c} \leq j+\abs{c} < k \leq \ell$, 
and that $k-j \geq \max(\abs{rc},\abs{cs})$ implies $k-(j+\abs{c}) \geq \max(\abs{r},\abs{s}) \geq L$,
so by \ref{inter} we have $x_{[j+\abs{c},k)}\in\GGG$.  It follows that $x_{[j,k)}\in\FFF$, which establishes \ref{overlaps} for $\FFF$.  This completes the proof of Proposition \ref{prop:get-free-2}.

\subsection{Construction of $\Ep$ and $\Es$}\label{sec:get-gap}

In this section we prove Proposition \ref{prop:get-gap}, modulo a few lemmas that are proved in \S\ref{sec:endcaps}.  The argument here uses ideas similar to those in the proof of Lemma \ref{lem:hat-I}, but is more complicated.

Suppose $\GGG$ satisfies \ref{spec}, \ref{gap}, \ref{inter}, \ref{union} and let $r,c,s$ be a synchronising triple satisfying \eqref{eqn:short-overlaps} and such that $\abs{r},\abs{s} \geq L$, where $L$ is large enough for both \ref{inter} and \ref{union} to hold.
Let $\FFF = \FFF^{r,c,s}$ and $I = \FFF\setminus \FFF\FFF$; we must prove that $P(I,\ph)<P(\ph)$ and produce $\Ep,\Es\subset \LLL$ such that
\begin{equation}
\label{eqn:gapEFE}
P(\Ep \cup \Es \cup (\LLL\setminus \Ep \FFF \Es),\ph) < P(\ph). \end{equation}
To this end, given $w\in \GGG$, consider the set of times where the synchronising triple appears in a `good' position relative to the start of the word:
\begin{equation}\label{eqn:Gsw}
\Gs(w) := \{i\in [\abs{r},\abs{w}-\abs{cs}] \mid w_{[1,i]}\in \GGG \text{ and }
w_{(i-\abs{r}, i+\abs{cs}]}= rcs \}.
\end{equation}
Let $S(w) = \{i\in \Gs(w) \mid w_{(i+\abs{c},\abs{w}]}\in \GGG\}$ be the set of such times that are also `good' relative to the end of the word.  Then consider the collection
\[
\EEE := \{w\in \GGG \mid  S(w)=\emptyset\}.
\]
Note that $\FFF \subset c\GGG$, and if $cw\in \FFF$ has $S(w) \neq \emptyset$, then for $i\in S(w)$ we have
\[
w_{[1,i]} \in \GGG,\quad
w_{(i-\abs{r},i+\abs{cs}]} = rcs,\quad
w_{(i+\abs{c},\abs{w}]}\in \GGG
\quad
\Rightarrow
\quad
cw = cw_{[1,i]} c w_{(i+\abs{c},\abs{w}]} \in \FFF\FFF;
\]
this shows that $I = \FFF\setminus \FFF\FFF\subset c\EEE$, so $P(I,\ph) \leq P(\EEE,\ph)$ by Lemma \ref{lem:CD}.  

Let $\Ep := \Cp \EEE$ and $\Es := \EEE \Cs$.  
We must describe $\LLL\setminus \Ep \FFF \Es$.
By \ref{inter} and \eqref{eqn:short-overlaps}, given $w\in \GGG$ and $i,j\in S(w)$, we have $w_{(i,j]} \in \FFF$.  
Given $v\in \Cp \GGG \Cs$ we write $v = u^p w u^s$ for $u^p\in \Cp$, $w\in \GGG$, and $u^s\in \Cs$. 
  Then we have the following trichotomy: either
\begin{itemize}
\item $\#S(w)=0$, so $w\in \EEE$ and $v\in \Cp \EEE \Cs = \Cp \Es$; or
\item $\#S(w)= 1$, so $w\in \EEE\EEE$ and $v \in \Cp \EEE\EEE \Cs = \Ep \Es$; or
\item $\#S(w)>1$, in which case we take $i = \min S(w)$ and $j=\max S(w)$ to obtain $w_{[1,i]} \in \EEE$, $w_{(i,j]} \in \FFF$, and $w_{(j,\abs{w}]}\in \EEE$.  In particular, this gives $v\in \Cp \EEE \FFF \EEE \Cs \subset \Ep \FFF \Es$.
\end{itemize}
We conclude that $\LLL\setminus  \Ep\FFF\Es \subset \Cp \Es \cup\Ep \Es \cup (\LLL\setminus \Cp\GGG\Cs)$, so Lemma \ref{lem:CD} gives
\[
P(\Ep \cup \Es \cup (\LLL\setminus \Ep\FFF\Es),\ph) \leq 
\max\{P(\EEE,\ph), P(\Cp,\ph), P(\Cs,\ph)\}.
\]
By \ref{gap} and the fact that $P(I,\ph) \leq P(\EEE,\ph)$, we see that in order to prove \ref{gap-2} it suffices to show that $P(\EEE,\ph) < P(\ph)$.

As in \eqref{eqn:GH+}, let $\GGG^+ := \{w_{[1,i]} \mid w\in \GGG, 1\leq i\leq \abs{w}\}$, and similarly $\GGG^- := \{w_{[i,\abs{w}]} \mid w\in \GGG, 1\leq i\leq \abs{w}\}$.  The following lemma (proved in \S\ref{sec:endcaps}) says that with very few exceptions, words in $\GGG^+$ admit a decomposition with no prefix, and words in $\GGG^-$ admit a decomposition with no suffix.  In the proof, and below, we will use the collection
\begin{equation}\label{eqn:C}
\CCC := \Cp \cup \Cs \cup (\LLL\setminus \Cp \GGG\Cs).
\end{equation}

\begin{lemma}\label{lem:endcaps}
If $\GGG$ satisfies \ref{spec}, \ref{gap}, and \ref{inter} with some $\Cp,\Cs\subset \LLL$,  then $P(\GGG^+ \setminus (\GGG \Cs),\ph) < P(\ph)$, and similarly $P(\GGG^- \setminus (\Cp \GGG),\ph) < P(\ph)$.  In particular, there are $\xi>0$ and $Q_7>0$ such that for every $n\in \NN$ we have
\begin{equation}\label{eqn:GCG}
\begin{aligned}
\Lambda_n(\GGG^+ \setminus (\GGG\Cs),\ph) &\leq Q_7 e^{n(P(\ph)-\xi)}, \\
\Lambda_n(\GGG^- \setminus (\Cp\GGG),\ph) &\leq Q_7 e^{n(P(\ph)-\xi)}.
\end{aligned}
\end{equation}
\end{lemma}

\begin{figure}[htbp]
\includegraphics[width=300pt]{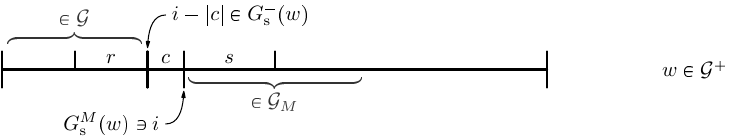}
\caption{Candidates for good occurrences of $(r,c,s)$.}
\label{fig:GsM}
\end{figure}

Given $w\in \GGG^+$, define $\Gs(w)$ as in \eqref{eqn:Gsw}.  For $M\in \NN$ and $w\in \GGG^+$, let
\[
\GsM(w) := \{i\in [\abs{rc},\abs{w}-\abs{s}] \mid i - \abs{c} \in \Gs(w) \text{ and } w_{(i,i+M]} \in \GGG\},
\]
as illustrated in Figure \ref{fig:GsM}.
The indices $i\in \GsM(w)$ are `candidates' for good occurrences of the synchronising triple $(r,c,s)$.  Note that here $i$ represents the position where $s$ starts, rather than the position where $r$ ends; this differs from our convention with $\Gs(w)$ but will be more convenient in what follows.  To guarantee that an index $i\in \GsM(w)$ represents a genuinely good occurrence (that is, $i-\abs{c}\in S(w)$), we will eventually need the added property that there is no long obstruction beginning at $i$; that is, $w_{(i,i']}\notin \Cp$ for any $i' \geq i + M - L$.  By Lemma \ref{lem:endcaps} this will (typically) guarantee existence of $j\in (i, i+M-L]$ such that $w_{(j,\abs{w}]}\in \GGG$; this in turn will allow us to apply \ref{union} and deduce that $w_{(i,\abs{w}]}\in \GGG$ and hence $i-\abs{c}\in S(w)$.

\begin{figure}[htbp]
\includegraphics[width=300pt]{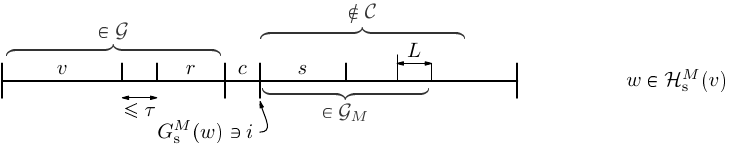}
\caption{Obtaining $i\in S(w)$ from $i\in \GsM(w)$.}
\label{fig:HsM}
\end{figure}

To show that nearly every word $w$ has indices in $\GsM(w)$, we will work left to right, estimating the probability that the synchronising word appears soon in a good position, conditioned on the symbols we have seen so far.  
For each $v\in \GGG$, let $\HHH^+(v) := \GGG^+ \cap v\LLL$ and consider the following collection, illustrated in Figure \ref{fig:HsM}:
\begin{multline*}
\HsM(v) := \{w\in \HHH^+(v) \mid \text{there is } i\in [\abs{v}+\abs{rc}, \abs{v}+\abs{rc}+\tau] \cap \GsM(w) \\
\text{ such that } w_{(i,i']}\notin \CCC \text{ for any } i'\in [i+M-L,\abs{w}]\},
\end{multline*}
Here $\CCC$ is as in \eqref{eqn:C}.
The following is proved in \S\ref{sec:chunk-of-H} and is the source of the exponential decay that we eventually obtain.

\begin{lemma}\label{lem:chunk-of-H}
There is $\gamma>0$ such that there are arbitrarily large values of $M\in \NN$ such that the following holds for every $v\in \GGG$ and $n\geq \abs{v} + \tau + \abs{rc} + M$:
\begin{equation}\label{eqn:chunk-of-H}
\Lambda_n(\HsM(v),\ph) \geq \gamma \Lambda_n(\HHH^+(v),\ph).
\end{equation}
\end{lemma}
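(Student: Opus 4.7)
Proposition \ref{prop:Gibbs} immediately gives the upper bound $\Lambda_n(\HHH^+(v),\ph) \leq Q_4 e^{(n-|v|)P(\ph)+\hat\ph(v)}$ via $\HHH^+(v)\cap\LLL_n \subset \HHH_n(v,1)$, so the task is a matching lower bound on $\Lambda_n(\HsM(v),\ph)$ with a uniform constant. Use \ref{spec} to pick $u\in\LLL_{\leq\tau}$ with $vur\in\GGG$ and set $i := |vurc| \in [|v|+|rc|,\,|v|+|rc|+\tau]$. Consider the tail family
\[
\YYY := \{\gamma\in\LLL_{n-i} : \gamma_{[1,M]}\in\GGG\cap s\LLL \text{ and } \gamma_{[1,k]}\notin\CCC \text{ for all } k\in[M-L,n-i]\}.
\]
For any $\gamma\in\YYY$, the synchronising triple property applied to $vur\in\GGG\cap\LLL r$ and $\gamma_{[1,M]}\in\GGG\cap s\LLL$ gives $vurc\gamma_{[1,M]}\in\GGG$, whence $w := vurc\gamma\in\GGG^+$; a direct check shows $i\in\GsM(w)$ with no $\CCC$-obstruction beginning at $i$, so $w\in\HsM(v)$. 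Two applications of \eqref{eqn:phvw} yield $\hat\ph(w) \geq \hat\ph(v)+\hat\ph(\gamma)-C_0$ with $C_0$ depending only on $\tau$, $|rc|$, $\|\ph\|$, $\abs{\ph}_d$, so it suffices to show $\Lambda_{n-i}(\YYY,\ph) \geq C_1 e^{(n-i)P(\ph)}$ for some uniform $C_1>0$.

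Decompose $\YYY\supseteq \YYY'\setminus\mathcal{Z}$, where $\YYY' := \{\gamma\in\LLL_{n-i} : \gamma_{[1,M]}\in\GGG\cap s\LLL\}$ and $\mathcal{Z} := \bigcup_{k=M-L}^{n-i}\{\gamma : \gamma_{[1,k]}\in\CCC\}$. Proposition \ref{prop:Gibbs} yields
\[
\Lambda_{n-i}(\YYY',\ph) = \sum_{w^*\in\GGG_M\cap s\LLL} \Lambda_{n-i}(\HHH_{n-i}(w^*,1),\ph) \geq Q_4^{-1} e^{(n-i-M)P(\ph)}\,\Lambda_M(\GGG\cap s\LLL,\ph),
\]
while the upper Gibbs bound combined with the gap $P(\CCC,\ph)<P(\ph)-\eps$ from \ref{gap} gives $\Lambda_{n-i}(\mathcal{Z},\ph) \leq Q_4 K\,\frac{e^{-(M-L)\eps}}{1-e^{-\eps}}\,e^{(n-i)P(\ph)}$, which is $o(e^{(n-i)P(\ph)})$ as $M\to\infty$.

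The main obstacle is to secure $\Lambda_M(\GGG\cap s\LLL,\ph) \geq C_2 e^{MP(\ph)}$ for arbitrarily large $M$ with $C_2$ independent of $M$. For each $y\in\GGG$, \ref{spec} supplies a connector $u_y\in\LLL_{\leq\tau}$ with $su_y y\in\GGG\cap s\LLL$; the map $y\mapsto su_y y$ is at most $(\tau+1)$-to-one, since a preimage is determined by the choice of $|u_y|\in\{0,\dots,\tau\}$, and changes $\hat\ph$ by a bounded amount via \eqref{eqn:phvw}. Summing over $y\in\GGG_j$ therefore shows that $\sum_{M'\in[|s|+j,|s|+j+\tau]}\Lambda_{M'}(\GGG\cap s\LLL,\ph)$ is bounded below by a constant multiple of $\Lambda_j(\GGG,\ph)$. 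Invoking \eqref{eqn:countingG2} to find infinitely many $j$ with $\Lambda_j(\GGG,\ph)\geq Q_3 e^{jP(\ph)}$ then produces, by pigeonholing across lengths $M'\in[|s|+j,|s|+j+\tau]$, infinitely many $M$ with the required bound. For any such $M$ large enough that $\Lambda_{n-i}(\mathcal{Z},\ph)\leq \tfrac12\Lambda_{n-i}(\YYY',\ph)$, we get $\Lambda_{n-i}(\YYY,\ph)\geq\tfrac12 Q_4^{-1}C_2 e^{(n-i)P(\ph)}$; combining with the distortion estimate and $n-i \geq n-|v|-\tau-|rc|$ yields the required $\gamma$ independent of $v$, $n$, and $M$.
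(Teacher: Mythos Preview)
Your overall architecture matches the paper's proof: choose a connector so that $vur\in\GGG$, set $i=\abs{vurc}$, count tails $\gamma$ whose first $M$ symbols lie in $\GGG\cap s\LLL$, subtract off those tails carrying a long $\CCC$-prefix, and pigeonhole over lengths to find infinitely many $M$ with $\Lambda_M(\GGG\cap s\LLL,\ph)\geq C_2 e^{MP(\ph)}$.  All of these steps are carried out correctly and essentially as in the paper.

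There is, however, one genuine gap.  You define $\YYY\subset\LLL_{n-i}$ (not $\GGG^+_{n-i}$) and then argue: ``$vurc\gamma_{[1,M]}\in\GGG$, whence $w:=vurc\gamma\in\GGG^+$''.  This deduction is backwards.  Knowing $vurc\gamma_{[1,M]}\in\GGG$ tells you that every \emph{prefix} of $vurc\gamma_{[1,M]}$ lies in $\GGG^+$; it says nothing about the longer word $vurc\gamma$ when $\abs{\gamma}>M$.  Since $\HsM(v)\subset\HHH^+(v)\subset\GGG^+$, you need $vurc\gamma\in\GGG^+$, and your argument does not give it.

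The fix is to restrict to tails $\gamma\in\GGG^+_{n-i}$ from the start, i.e.\ work with $\HHH^+_{n-i}(w^*)$ rather than $\HHH_{n-i}(w^*,1)$.  Then $\gamma\in\GGG^+$ gives some $x$ with $\gamma x\in\GGG$; since $\gamma x\in s\LLL\cap\GGG$ and $vur\in\LLL r\cap\GGG$, the synchronising property yields $vurc\gamma x\in\GGG$, so $vurc\gamma\in\GGG^+$ as needed.  This is exactly what the paper does (summing over $w\in\HHH^+_m(s')$), and the lower bound from Proposition~\ref{prop:Gibbs} survives the restriction to $\HHH^+$ because the proof of that proposition constructs words in $\GGG^+$ (see the footnote in the paper's proof).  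With this correction your argument goes through.
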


In order to apply Lemma \ref{lem:chunk-of-H}, we need to show that for $w\in \GGG^+$, we have $w_{[1,i]}\in \GGG$ `often enough'.  Thanks to Lemma \ref{lem:endcaps}, we can do this by controlling how many words can have long segments covered by a small number of words in $\CCC$.   To this end, given $\delta,\beta >0$, consider the collection
\begin{multline*}
\hC^{\beta,\delta} := \{w\in \LLL \mid \text{there are } \{ (i_a,i_a'] \subset [1,\abs{w}]\}_{a=1}^ B \text{ with } B\leq 2 \lfloor \beta\abs{w} \rfloor\\
\text{such that } w_{(i_a,i_a']}\in \CCC \text{ and } \# \textstyle\bigcup_{a=1}^B (i_a, i_a'] \geq 2\delta \abs{w} \}
\end{multline*}
of words for which at least $2\delta$ of the length of the word can be covered by a small ($\leq 2 \lfloor \beta\abs{w} \rfloor$) number of subwords lying in $\CCC$.  When $\beta \ll \delta$, the following estimate (proved in \S\ref{sec:hatC}) shows that $\hC^{\beta,\delta}$ has small pressure, and gives a concrete estimate on the partition sum.

\begin{lemma}\label{lem:hatC}
For every $\delta>0$ there is $\beta>0$ such that $P(\hC^{\beta,\delta},\ph) < P(\ph)$.  In particular, there are $Q_8>0$ and $\theta<1$ such that
\begin{equation}\label{eqn:hatC}
\Lambda_n(\hC^{\beta,\delta},\ph) \leq Q_8 \theta^n e^{nP(\ph)}
\end{equation}
for every $n\in \NN$.  Moreover, because $\hC^{\beta',\delta} \subset \hC^{\beta,\delta}$ for every $0 < \beta' < \beta$, \eqref{eqn:hatC} remains true if $\beta$ is replaced with any smaller positive number.
\end{lemma}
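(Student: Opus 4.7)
The plan is to bound $\Lambda_n(\hC^{\beta,\delta},\ph)$ by enumerating over disjoint subcollections of the covering $\CCC$-intervals and invoking the pressure gap from \ref{gap}. That condition gives $\eps>0$ and $K_1>0$ with $\Lambda_m(\CCC,\ph) \leq K_1 e^{m(P(\ph)-\eps)}$, and Lemma \ref{lem:counting} gives $\Lambda_m(\LLL,\ph) \leq Q_2 e^{mP(\ph)}$. Iterating \eqref{eqn:CD}, if $(l_a,r_a]_{a=1}^K$ are pairwise disjoint subintervals of $[1,n]$ with $\sum_a (r_a-l_a) \geq \lambda n$, then the sum of $e^{\hat\ph(w)}$ over $w\in\LLL_n$ satisfying $w_{(l_a,r_a]}\in\CCC$ for each $a$ is at most $(Q_2 K_1)^{K+1}\, e^{nP(\ph)}\, e^{-\eps\lambda n}$, coming from the $K$ constrained $\CCC$-segments and the $K+1$ free $\LLL$-segments between them.

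The combinatorial heart of the argument is to associate with each $w\in \hC^{\beta,\delta}_n$ a disjoint subcollection of its covering $\CCC$-intervals whose total length is bounded below by a fixed positive fraction of $|U|\geq 2\delta n$. A greedy-by-length selection produces such a subcollection: process the $B$ intervals in order of decreasing length and include each that is disjoint from all those already selected. Any rejected interval overlaps an earlier selected interval of at least its length, hence lies inside the factor-$3$ thickening of that interval; consequently the selected family covers at least $|U|/3 \geq 2\delta n/3$ positions, and its cardinality $K$ satisfies $K\leq B \leq 2\beta n$.

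Enumerating over the possible disjoint subfamilies: since $K$ intervals are determined by $2K$ endpoints in $[1,n]$, their count is at most $\binom{n}{2K} \leq (n+1)e^{h(4\beta)n+1}$ by Lemma \ref{lem:entropy-bound}, where $h$ is the binary entropy. Summing over $K\leq 2\beta n$,
\[
\Lambda_n(\hC^{\beta,\delta},\ph) \leq (2\beta n+1)(n+1)e^{h(4\beta)n+1}(Q_2 K_1)^{2\beta n+1}\, e^{nP(\ph)}\, e^{-2\eps\delta n/3},
\]
which yields
\[
P(\hC^{\beta,\delta},\ph) \leq P(\ph) + h(4\beta) + 2\beta\log(Q_2 K_1) - \tfrac{2}{3}\eps\delta.
\]
Since $h(4\beta)$ and $\beta$ both vanish as $\beta\to 0$, one chooses $\beta=\beta(\delta)>0$ small enough that the right side is strictly less than $P(\ph)$, whence \eqref{eqn:hatC} follows for some $\theta<1$ and $Q_8>0$. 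The monotonicity $\hC^{\beta',\delta}\subset\hC^{\beta,\delta}$ for $\beta'<\beta$ is immediate from the definition. The main obstacle is the interval-graph lemma producing a disjoint subcollection with covering fraction bounded below uniformly in $n$ and $B$; the remainder is routine pressure accounting in the spirit of \S\ref{sec:counting}.
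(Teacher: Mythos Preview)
Your argument is correct and follows the same overall architecture as the paper's proof: extract from the covering $\CCC$-intervals a \emph{disjoint} subfamily of controlled cardinality and total length $\geq c\delta n$, then enumerate over the possible disjoint configurations and apply \eqref{eqn:CD} together with the gap $P(\CCC,\ph)<P(\ph)$. The difference lies in how the disjoint subfamily is produced. You use a greedy-by-length (Vitali-type) selection, obtaining coverage $\geq \tfrac13|U| \geq \tfrac{2}{3}\delta n$; the paper instead observes that after thinning redundancies every point lies in at most two of the intervals, so splitting into two disjoint layers (odd- and even-indexed after reordering) gives one layer covering $\geq \delta n$. The paper's constant is slightly sharper, but both are perfectly adequate since the conclusion only needs some positive lower bound proportional to $\delta n$.

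One small imprecision: the disjoint intervals $(l_a,r_a]$ may abut, i.e.\ $r_a = l_{a+1}$, so the endpoint sequence is only weakly increasing and the count is not $\binom{n}{2K}$ but rather $\binom{n+K}{2K}$ (or, as the paper writes it, at most $\binom{2(n+1)}{2K}$). This changes $h(4\beta)$ to something like $2h(2\beta)$ in your final pressure estimate, which is harmless for the conclusion but should be stated correctly.
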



Let $\delta = 1/8$, and let $\beta>0$ be such that Lemma \ref{lem:hatC} holds.  Choose an integer $M\geq \max(\beta^{-1},2(\abs{rc}+\tau))$ such that
Lemma \ref{lem:chunk-of-H} holds and 
such that writing $T = 2(M+\abs{rc}+\tau)$, the quantity
 $\zeta := (\max(\theta,e^{-\xi}))^T$ satisfies\footnote{Recall that $\xi, \gamma, \theta$ are provided by Lemmas \ref{lem:endcaps}, \ref{lem:chunk-of-H}, and \ref{lem:hatC}, respectively.  Note that in Lemma \ref{lem:chunk-of-H}, $\gamma$ is independent of the choice of $M$.}
\begin{equation}\label{eqn:zeta-gamma}
Q_2 (Q_7 +Q_8) \zeta(1-\theta)^{-1}(1-\zeta)^{-1} < \gamma.
\end{equation}
Observe our choice of $M$ guarantees that $\frac 1{2T} < \beta$ and that $T\leq 3M$; we will use this in Lemma \ref{lem:AkZk} below.  
For every $k\in \NN$ and $m \geq kT$, Lemmas \ref{lem:endcaps} and \ref{lem:hatC} give
\begin{equation}\label{eqn:QQ}
\begin{aligned}
\Lambda_m(\GGG^+ \setminus (\GGG \Cs),\ph) &\leq Q_7 \zeta^k e^{mP(\ph)}, \\
\Lambda_m(\hC^{\beta,\delta},\ph) &\leq Q_8 \zeta^k e^{mP(\ph)}.
\end{aligned}
\end{equation}
Consider the intervals
$
I_j := (jT, (j+\frac 12)T]
$
for $j\in \NN$, and write
\begin{multline}\label{eqn:AAAk}
\AAA^k := \{w\in \GGG^+ \mid \text{for all } 0\leq j < k \text{ and } i\in I_j \cap \GsM(w), \\
\text{there is } 
i'\in [i+M-L,\abs{w}] \text{ such that } w_{(i,i']}\in \CCC \}
\end{multline}
for the set of words which \defn{avoid} genuinely good occurrences of $(r,c,s)$ in the first $k$ intervals $I_j$; in particular, any candidate good occurrence within these intervals is `ruined' sometime before the end of $w$ by a long obstruction from $\CCC$.
Note that $\AAA^{k+1} \subset \AAA^k$, and that if $v\in \AAA^k$ and $\abs{v}\geq kT$, then $\HHH^+(v) \subset\AAA^k$.  On the other hand, there may be $w\in \AAA^k$ with $w_{[1,kT]}\notin \AAA^k$.

Our goal is to relate $\Lambda_{n}(\AAA^{k+1},\ph)$ and $\Lambda_{n}(\AAA^k,\ph)$.  We do this by decomposing (most of) $\AAA^k$ into collections $\HHH^+(v)$ where $v\in \GGG$, and then applying Lemma \ref{lem:chunk-of-H}.  Given $k\in \NN$, write
\begin{equation}\label{eqn:mkZk}
\begin{aligned}
m_k &:= (k+\tfrac 12)T - (\tau + \abs{rc}), \\
\ZZZ^k &:= \{v\in \GGG \mid \abs{v} \in [kT,m_k], v_{[1,kT]}\in \AAA^k, v_{[1,i]}\notin \GGG \text{ for any } kT\leq i < \abs{v}\}.
\end{aligned}
\end{equation}
We will use the following observations.
\begin{enumerate}[leftmargin=*, widest=2]
\item The collections $\{\HHH^+(v) \mid v\in \ZZZ^k\}$ are disjoint; no word in $\ZZZ^k$ is a prefix of any other.  Indeed, given $v'\in \ZZZ^k$ and $v\in v'\LLL \cap \GGG$ with $v\neq v'$, we have $kT \leq \abs{v'} < \abs{v}$ and $v_{[1,\abs{v'}]} = v' \in \GGG$, so $v\notin \ZZZ^k$.
\item Given $v\in \ZZZ^k$ and $n\geq (k+1)T$, we have $\HsM(v)_n \subset \HHH_n^+(v) \setminus \AAA^{k+1}$.   To see this, observe that $\HsM(v) \subset \HHH^+(v)$ by definition, and that moreover any $w\in \HsM(v)$ has some $i\in \GsM(w)$ such that
\begin{itemize}
\item $i\in [\abs{v}+\abs{rc},\abs{v}+\abs{rc}+\tau] \subset I_k$ (this last inclusion motivates our choice of $m_k$), and
\item $w_{(i,i']}\notin \CCC$ for any $i'\in [i+M-L,\abs{w}]$.
\end{itemize}
The existence of such an $i$ guarantees that $w\notin \AAA^{k+1}$.
\end{enumerate}
The following lemma, proved in \S\ref{sec:AkZk}, relates $\AAA^k$ to $\bigsqcup_{v\in \ZZZ^k} \HHH^+(v)$ up to a small term whose partition sum is well controlled by Lemmas \ref{lem:endcaps} and \ref{lem:hatC}.

\begin{lemma}\label{lem:AkZk}
For every $k\geq 0$ and $n\geq m_k$, we have
\begin{equation}\label{eqn:AkZk}
\bigsqcup_{v\in \ZZZ^k} \HHH_n^+(v) \subset \AAA_n^k
\subset \bigg( \bigsqcup_{v\in \ZZZ^k} \HHH_n^+(v) \bigg) \cup \YYY_n^k,
\end{equation}
where 
\[
\YYY_{n}^k := 
 (\GGG^+\setminus(\GGG\Cs))_{m_k} \LLL_{n-m_k} \cup 
 \bigg(\bigcup_{j=0}^{k-1} \bigcup_{\ell=kT}^n \AAA^j_{jT} \hC^{\beta,\delta}_{\ell-jT} \LLL_{n-\ell}\bigg).
\]
Moreover, $\YYY^k := \bigcup_{n\in \NN} \YYY_n^k$ satisfies the bound (with $\theta,\zeta$ as in \eqref{eqn:hatC}--\eqref{eqn:QQ})
\begin{equation}\label{eqn:Ynk}
\Lambda_{n}(\YYY^k,\ph) \leq Q_2 \cdot \frac{Q_7+Q_8}{1-\theta} \sum_{j=0}^{k-1} \Lambda_{jT}(\AAA^j,\ph) \zeta^{k-j} e^{(n-jT)P(\ph)}.
\end{equation}
\end{lemma}

Now we estimate $\Lambda_{n}(\AAA^k,\ph)$.
Given $v\in \ZZZ^k$ and $n\geq (k+1)T$, we recall from above that $\HsM(v)_n \subset \HHH_n^+(v) \setminus \AAA_n^{k+1}$, and so Lemma \ref{lem:chunk-of-H} gives
\begin{equation}\label{eqn:LnH+}
\Lambda_{n}(\HHH^+(v) \cap \AAA^{k+1},\ph)
\leq \Lambda_{n}(\HHH^+(v) \setminus \HsM(v),\ph)
\leq (1-\gamma) \Lambda_{n}(\HHH^+(v),\ph).
\end{equation}
Summing over $v\in \ZZZ^k$ and using $\AAA^{k+1} \subset \AAA^k$ together with the second half in \eqref{eqn:AkZk} gives
\begin{equation}\label{eqn:Ak1}
\Lambda_{n}(\AAA^{k+1},\ph)\leq \bigg(\sum_{v\in \ZZZ^k} \Lambda_{n}(\HHH^+(v) \cap \AAA^{k+1}, \ph)\bigg) + \Lambda_{n}(\YYY^k,\ph).
\end{equation}
Using the first inclusion in \eqref{eqn:AkZk} together with \eqref{eqn:LnH+} gives
\[
\sum_{v\in \ZZZ^k} \Lambda_{n}(\HHH^+(v) \cap \AAA^{k+1}, \ph) \leq
\sum_{v\in \ZZZ^k} (1-\gamma) \Lambda_n(\HHH^+(v),\ph)
\leq (1-\gamma) \Lambda_n(\AAA^k,\ph),
\]
and so from \eqref{eqn:Ak1} we obtain
\begin{equation}\label{eqn:Ak2}
\Lambda_n(\AAA^{k+1},\ph) \leq (1-\gamma) \Lambda_n(\AAA^k,\ph) + \Lambda_n(\YYY^k,\ph).
\end{equation}
Let
$a_k := \sup_{n\geq kT} \Lambda_n(\AAA^k,\ph) e^{-nP(\ph)}$
so that $\Lambda_{n}(\AAA^k,\ph) \leq a_k e^{nP(\ph)}$ for all $n\geq kT$.  Lemma \ref{lem:counting} gives $a_k \leq a_0 \leq Q_2$; we will prove  that $a_k \leq Q_2 \eta^k$ for some $\eta < 1$.  Write \eqref{eqn:Ynk} as
\[
\Lambda_n(\YYY^k,\ph) \leq Q_2\frac{Q_7+Q_8}{1-\theta} \sum_{j=0}^{k-1} a_j \zeta^{k-j} e^{nP(\ph)},
\]
and multiply both sides of \eqref{eqn:Ak2} by $e^{-nP(\ph)}$ to get
\begin{equation}\label{eqn:ak}
a_{k+1} \leq (1-\gamma) a_k + Q_2\frac{Q_7+Q_8}{1-\theta} \sum_{j=0}^{k-1} a_j \zeta^{k-j}.
\end{equation}
Rewriting \eqref{eqn:zeta-gamma} as
$(1-\gamma) + Q_2(Q_7+Q_8) \zeta(1-\theta)^{-1}(1-\zeta)^{-1} < 1$,
choose $\eta<1$ such that
\begin{equation}\label{eqn:eta-gamma}
(1-\gamma) \eta^{-1} + Q_2\frac{Q_7+Q_8}{1-\theta} \cdot \frac{\zeta}{\eta(\eta - \zeta)} < 1.
\end{equation}
Let $b_k = a_k \eta^{-k}$ and observe that $b_0 = a_0 \leq Q_2$.  Suppose that $k$ is such that $b_j \leq Q_2$ for every $0\leq j\leq k$.  Then \eqref{eqn:ak} gives
\begin{align*}
b_{k+1} &= a_{k+1} \eta^{-(k+1)}
\leq (1-\gamma)a_k \eta^{-(k+1)} + Q_2\frac{Q_7+Q_8}{1-\theta} \eta^{-(k+1)} \sum_{j=0}^{k-1} b_j \eta^j \zeta^{k-j} \\
&\leq (1-\gamma) b_k \eta^{-1} 
+ Q_2\frac{Q_7+Q_8}{1-\theta} \cdot \frac 1\eta \sum_{j=0}^{k-1} Q_2 \bigg( \frac{\zeta}{\eta}\bigg)^{k-j} \\
&\leq Q_2\bigg(
(1-\gamma)\eta^{-1} + Q_2\frac{Q_7+Q_8}{1-\theta} \cdot \frac 1\eta \cdot \frac {\frac{\zeta}\eta}{1-\frac{\zeta}\eta}\bigg) < Q_2,
\end{align*}
where the final inequality uses \eqref{eqn:eta-gamma}.  It follows by induction that $b_k\leq Q_2$ for every $k\in \NN$, and thus $a_k \leq Q_2 \eta^k$ for every $k$.  In particular, this gives 
\begin{equation}\label{eqn:Ak}
\Lambda_{n}(\AAA^k,\ph) \leq Q_2 \eta^k e^{nP(\ph)}
\quad\text{ for every }0\leq kT\leq n.
\end{equation}
It remains to relate $\EEE_{n}$ to $\AAA_n^{k}$.
Given $n\in \NN$ large, choose $k_n \in [\frac n{3T}, \frac n{2T}]\cap \NN$.  We claim that
\begin{equation}\label{eqn:EnAn}
\EEE_n \subset \AAA_n^{k_n} \cup \bigg( \bigcup_{m=0}^{k_n T} \LLL_m (\GGG^- \setminus \Cp \GGG)_{n-m} \bigg).
\end{equation}
Indeed, given $w\in \GGG_n$, suppose that $w$ is not contained in the right-hand side of \eqref{eqn:EnAn}; that is, $w\notin \AAA_n^{k_n}$ and $w_{(m,\abs{w}]}\in \Cp\GGG$ for every $0\leq m\leq k_n T$.  Then by the definition of $\AAA_n^{k_n}$, there are $j<k_n$ and $i\in \GsM(w) \cap I_j$ such that $w_{(i,i']}\notin \CCC$ for any $i'\geq i+M-L$.  Since $w_{(i,\abs{w}]}\in \CCC \GGG$, this implies that there is $\ell\in [i, i+M-L]$ such that $w_{(\ell,\abs{w}]}\in \GGG$.  Applying \ref{union} to $w_{(i,i+M]}$ and $w_{(\ell,\abs{w}]}$, we conclude that $w_{(i,\abs{w}]}\in \GGG$, and hence $i-\abs{c}\in S(w)$.\footnote{This is the only place in the paper where we use \ref{union}.} 
In particular, this proves that $S(w)\neq \emptyset$, so $w\notin \EEE_n$, establishing \eqref{eqn:EnAn}.

Using \eqref{eqn:EnAn} together with \eqref{eqn:GCG} and \eqref{eqn:Ak}, we now have the estimate
\begin{align*}
\Lambda_n(\EEE,\ph) &\leq \Lambda_n(\AAA^{k_n},\ph) + \sum_{m=0}^{k_n T} \Lambda_m(\LLL,\ph) \Lambda_{n-m}(\GGG^-\setminus \Cp \GGG, \ph) \\
&\leq Q_2 \eta^{k_n} e^{nP(\ph)} + \sum_{m=0}^{k_n T} Q_2 e^{mP(\ph)} Q_7 e^{(n-m)(P(\ph) - \xi)} \\
&\leq Q_2 e^{nP(\ph)} \bigg( \eta^{\frac n{3T}} + Q_7 \sum_{\ell = n-k_nT}^\infty e^{-\ell\xi}\bigg) 
\leq Q_2 e^{nP(\ph)} \big( 
\eta^{\frac n{3T}}
+ Q_7(1-e^{-\xi})^{-1} e^{-\frac n2 \xi} \big).
\end{align*}
Taking logs, dividing by $n$, and sending $n\to\infty$ gives
\[
P(\EEE,\ph) \leq P(\ph) + \max ( \tfrac 1{3T}\log \eta, -\tfrac \xi 2) < P(\ph).
\]
This completes the proof of Proposition \ref{prop:get-gap}, and hence of Theorem \ref{thm:get0spec}, modulo the proofs of Lemmas \ref{lem:endcaps}--\ref{lem:AkZk}, which we give in the next section.

\subsection{Proofs of Lemmas \ref{lem:endcaps}--\ref{lem:AkZk}}
\subsubsection{Proof of Lemma \ref{lem:endcaps}}\label{sec:endcaps}

We prove the lemma for $\GGG^+ \setminus \GGG \Cs$; the other claim follows from a symmetric argument.
It suffices to show that $P(\GGG^+ \setminus \GGG\Cs,\ph) < P(\ph)$, which we do by finding $\RRR,\SSS\subset \LLL$ and $c\in \NN$ with $\GGG_{\geq c}^+ \setminus \GGG\Cs \subset \RRR\cup \SSS$, proving that $P(\RRR,\ph) < P(\ph)$ and $P(\SSS,\ph) < P(\ph)$, and applying \eqref{eqn:PCD}.

Fix $\eps>0$ small enough that $P(\CCC,\ph) + \eps < P(\ph)$, so there is $C>0$ with 
\begin{equation}\label{eqn:LnC}
\Lambda_n(\CCC,\ph) \leq C e^{n(P(\ph) - \eps)} \text{ for every } n\in \NN.
\end{equation}
Then fix $\alpha>0$ such that $\alpha(P(\ph)+\|\ph\|) < \eps$.
Let $N,Q_3,n_0$ be as in Lemma \ref{lem:counting}, so that given any $n\geq c := \tau + \lceil n_0/\alpha\rceil$ we can choose $m_n \in [\alpha n - \tau - N, \alpha n - \tau ]$ for which $\Lambda_{m_n}(\GGG,\ph) \geq Q_3 e^{m_n P(\ph)}$.  Note that $m_n + \tau  \leq \alpha n$ and $\frac 1n m_n \to \alpha$. 

Given $w\in \GGG_{\geq c}^+$, let $x\in \LLL$ be such that $wx\in \GGG$.  For every $v\in \GGG$, it follows from \ref{spec} that there is $u=u(v,w)\in \LLL_{\leq \tau}$ such that $vuwx\in \GGG$.  From now on we will consider $x$ as a function of $w$, and $u$ as a function of $w,v$.
Consider the collection $\RRR \subset \GGG_{\geq c}^+$ given by
\[
\RRR := \{w\in \GGG_{\geq c}^+ \mid \text{ for all } v\in \GGG_{m_{\abs{w}}} \text{ there is } 1\leq k\leq \abs{w} \text{ such that } vu(w_{[1,k]}) \in \CCC \}.
\]
In particular, for every $n\geq c$ and $w\in \GGG_n^+ \setminus \RRR$, there are $v\in \GGG_{m_n}$ and $u\in \LLL_{\leq \tau}$ such that $vuw\in \GGG^+$ and $vu(w_{[1,k]})\notin \CCC$ for every $1\leq k\leq \abs{w}$.  We will consider $v,u$ as functions of $w$ whenever $w\in \GGG^+\setminus \RRR$. Let
\[
\SSS := \{ w\in \GGG_{\geq c}^+ \setminus \RRR \mid \abs{w}\geq L \text{ and there is } 1\leq j \leq \abs{vu} + L \text{ such that } (vuw)_{(j,\abs{vuw}]}\in \CCC \}.
\]
We will demonstrate below that $P(\RRR \cup \SSS,\ph) < P(\ph)$.  First we observe that if $w\in \GGG_{\geq c}^+ \setminus (\RRR \cup \SSS)$, then we have $vuw\notin \CCC$ and hence $vuw \in \Cp \GGG \Cs$, so that in particular there are $1\leq i \leq j \leq \abs{vuw}$ such that
\[
(vuw)_{[1,i]}\in \Cp, \qquad (vuw)_{(i,j]}\in \GGG, \qquad (vuw)_{(j,\abs{vuw}]}\in \Cs.
\]
By the choice of $v$ and $u$, we have $i\leq \abs{vu}$, and by the definition of $\SSS$, we have $j > \abs{vu} + L$.  In particular, writing $\ell = j - \abs{vu}$, we see that $w_{(\ell,\abs{w}]} = (vuw)_{(j,\abs{vuw}]}\in \Cs$, and also $w_{[1,\ell]}$ is the intersection of the two words $(vuwx)_{(i,j]}\in \GGG$ and $(vuwx)_{(\abs{vu},\abs{vuwx}]} = wx\in \GGG$.  Since $\ell\geq L$ this gives $w_{[1,\ell]}\in \GGG$ (by \ref{inter}) and thus $w\in \GGG \Cs$.

Having proved that $\GGG_{\geq c}^+ \setminus \GGG\Cs \subset \RRR \cup \SSS$, it remains to estimate $P(\RRR,\ph)$ and $P(\SSS,\ph)$.
To estimate $\Lambda_n(\RRR,\ph)$, we will estimate the partition sum of the collection $\{vuw \mid v\in \GGG_{m_n}, w\in \RRR_n\}$ in two different ways.  First, note that every such $vuw$ has length between $n+m_n$ and $n+m_n+\tau$, so along the same lines as in the proof of Lemma \ref{lem:counting}, we have
\begin{multline}\label{eqn:Gmn1}
\sum_{v\in \GGG_{m_n}} \sum_{w\in \RRR_n} e^{\hat\ph(vuw)} 
\geq \sum_{v\in \GGG_{m_n}} \sum_{w\in \RRR_n} e^{\hat\ph(v)} e^{\hat\ph(w)} e^{-(\tau\|\ph\| + \absd{\ph})} \\
\geq e^{-(\tau\|\ph\| + \absd{\ph})}  \Lambda_{m_n}(\GGG,\ph) \Lambda_n(\RRR,\ph)
\geq e^{-(\tau\|\ph\| + \absd{\ph})} Q_3 e^{m_n P(\ph)} \Lambda_n(\RRR,\ph).
\end{multline}
On the other hand, for $v\in \GGG_{m_n}$ and $w\in \RRR_n$, we have $vuw\in \CCC_{m_n + \abs{u} +k} \LLL_{n - k}$ for some $1\leq k\leq n$, and thus as in \eqref{eqn:CD},
\begin{equation}\label{eqn:Gmn2}
\begin{aligned}
\sum_{v\in \GGG_{m_n}} \sum_{w\in \RRR_n} e^{\hat\ph(vuw)} 
&\leq \sum_{k=1}^n \sum_{t=0}^\tau \sum_{x\in \CCC_{m_n + t + k}} \sum_{y\in \LLL_{n- k}} e^{\hat\ph(x)} e^{\hat\ph(y)} \\
&\leq \sum_{k=1}^n \sum_{t=0}^\tau Ce^{(m_n + t + k)(P(\ph)-\eps)}
Q_2 e^{(n-k)P(\ph)}.
\end{aligned}
\end{equation}
Observe that for every $1\leq k\leq n$ and $0\leq t\leq \tau$ we have
\begin{multline*}
(n-k)P(\ph) + (m_n + t + k)(P(\ph)-\eps)  \\
= (n+m_n + t)P(\ph) - (m_n + t + k)\eps 
\leq (n+m_n+\tau)P(\ph) - m_n \eps
\end{multline*}
and so \eqref{eqn:Gmn2} gives
\[
\sum_{v\in \GGG_{m_n}} \sum_{w\in \RRR_n} e^{\hat\ph(vuw)} 
\leq n(\tau + 1) Q_2 C e^{(n+m_n+\tau)P(\ph)} e^{-m_n\eps},
\]
which together with \eqref{eqn:Gmn1} gives
\begin{align*}
\Lambda_n(\RRR,\ph) &\leq e^{\tau\|\ph\| + \absd{\ph}} Q_3^{-1} e^{-m_nP(\ph)} n(\tau+1) Q_2 C e^{(n+m_n+\tau)P(\ph)} e^{-m_n\eps} \\
&\leq e^{\tau\|\ph\| + \absd{\ph}} Q_3^{-1} n(\tau + 1) Q_2 C e^{(n+\tau)P(\ph)} e^{-m_n\eps},
\end{align*}
and we conclude that $P(\RRR,\ph) \leq P(\ph) - \alpha\eps < P(\ph)$.

Now we consider $\Lambda_n(\SSS,\ph)$.  For every $w\in \SSS_n$ we have $v\in \GGG_{m_n}$ and $u\in \LLL_t$ for some $t\leq \tau$ such that $vuw \in \LLL_j \CCC_{\abs{vuw}-j}$ for some $j\leq \abs{vu} +L$.  This gives
\begin{multline}\label{eqn:LnS}
\Lambda_n(\SSS,\ph) = \sum_{w\in \SSS_n} e^{\hat\ph(w)}
\leq \sum_{w\in \SSS_n} e^{(\tau + m_n)\|\ph\| + \absd{\ph}} e^{\hat\ph(vuw)} \\
\leq e^{\alpha n \|\ph\| + \absd{\ph}} 
\sum_{t=0}^\tau \sum_{j=1}^{m_n + t + L} \Lambda_j(\LLL,\ph) \Lambda_{m_n+t+n-j}(\CCC,\ph).
\end{multline}
For each choice of $t,j$ we get
\begin{multline*}
\Lambda_j(\LLL,\ph) \Lambda_{m_n + t + n - j}(\CCC,\ph)
\leq Q_2 e^{jP(\ph)} C e^{(m_n + t + n - j)(P(\ph) - \eps)}
\\
= Q_2 C e^{(m_n + t) P(\ph)} e^{nP(\ph)} e^{-\eps(n + m_n + t - j)}
\leq Q_2 C e^{\alpha n P(\ph)} e^{nP(\ph)} e^{-\eps \ell}
\end{multline*}
where $\ell = n + m_n + t - j \geq n - L$, and so
\[
\sum_{j=1}^{m_n + t + L}\Lambda_j(\LLL,\ph) \Lambda_{m_n + t + n - j}(\CCC,\ph)
\leq Q_2 C e^{\alpha nP(\ph)} e^{nP(\ph)} e^{-\eps(n-L)} (1-e^{-\eps})^{-1}.
\]
 Together with \eqref{eqn:LnS}, this gives
\[
\Lambda_n(\SSS,\ph) \leq e^{\alpha n (P(\ph) + \|\ph\|)} e^{\absd{\ph}} Q_2 C e^{nP(\ph)} (\tau + 1) e^{-\eps(n-L)} (1-e^{-\eps})^{-1},
\]
and we conclude that $P(\SSS,\ph) \leq P(\ph) + \alpha(P(\ph) + \|\ph\|) - \eps < P(\ph)$, where the last inequality uses our choice of $\alpha$.  This proves Lemma \ref{lem:endcaps}.

\subsubsection{Proof of Lemma \ref{lem:chunk-of-H}}\label{sec:chunk-of-H}

We first describe the values of $M$ that we will use, then give a computation that proves \eqref{eqn:chunk-of-H} (and shows that $\gamma$ is independent of $M$).
By Lemma \ref{lem:counting} there are arbitrarily large values of $\ell$ such that $\Lambda_\ell(\GGG,\ph) \geq Q_3 e^{\ell P(\ph)}$.  Given such an $\ell$, define $\pi\colon \GGG_\ell \to \bigsqcup_{i=0}^\tau s\LLL \cap \GGG_{\abs{s}+\ell+i}$ using \ref{spec} by $\pi(w) = suw$ where $u=u(w)\in \LLL_{\leq \tau}$, and $s$ is from the synchronising triple.  Then we have
\begin{align*}
\sum_{i=0}^\tau \Lambda_{\abs{s} + \ell  + i}(s\LLL \cap \GGG,\ph) 
&\geq \sum_{w\in \GGG_\ell } e^{\hat\ph(suw)}
\geq \sum_{w\in \GGG_\ell } e^{\hat\ph(su)} e^{\hat\ph(w)} e^{-\absd{\ph}} \\
&\geq e^{-(\abs{s}+\tau)\|\ph\|-\absd{\ph}} \Lambda_\ell (\GGG,\ph),
\end{align*}
so in particular there is $M\in [\abs{s}+\ell ,\abs{s}+\ell +\tau]$ such that
\begin{align*}
\Lambda_M(s\LLL \cap \GGG,\ph) &\geq (\tau+1)^{-1}e^{-(\abs{s}+\tau)\|\ph\|-\absd{\ph}}Q_3 e^{\ell P(\ph)} \\
&\geq (\tau+1)^{-1}e^{-(\abs{s}+\tau)(\|\ph\|+P(\ph))-\absd{\ph}}Q_3 e^{MP(\ph)}.
\end{align*}
Putting $Q_9 = (\tau+1)^{-1}e^{-(\abs{s}+\tau)(\|\ph\|+P(\ph))-\absd{\ph}}Q_3$, we will use this in the form
\begin{equation}\label{eqn:sLG}
\Lambda_M(s\LLL \cap \GGG,\ph) \geq Q_9 e^{MP(\ph)}.
\end{equation}
Note that $M$ can be taken arbitrarily large, and that $Q_9$ only depends on $Q_3,\tau,\abs{s},\ph$.

Now we fix $v\in \GGG$ and  $n\geq \abs{v} + \tau + \abs{rc} + M$.
By \ref{spec} there is $p\in \LLL_{\leq \tau}$ such that $vpr\in \GGG$.  Write $i = \abs{vprc}$ and $m=n-i \geq M$.  For any $s'\in s\LLL \cap \GGG_M$ and $w\in \HHH^+_m(s')$, the synchronising property of $(r,c,s)$ gives
\[
vprcw \in \widetilde\HsM(v,i) := \{x\in \HHH^+_n(vprc) \mid i \in \GsM(x)\};
\]
that is, $i$ represents a `candidate' good occurrence of the synchronising triple 
in $vprcw$.  We first estimate how many such words $vprcw$ there are, and then show that in most of them, $i$ must be a genuinely good occurrence; that is, there is typically not a long subword in $\CCC$ beginning in position $i$.
To start, Proposition \ref{prop:Gibbs} gives
\begin{equation}\label{eqn:H+s'}
\Lambda_m(\HHH^+(s'),\ph) \geq Q_4^{-1} e^{(m-M)P(\ph)} e^{\hat\ph(s')}
\end{equation}
so we have
\begin{align*}
\Lambda_n(\widetilde\HsM(v,i),\ph) &\geq \sum_{s'\in s\LLL\cap \GGG_M} \sum_{w\in \HHH^+_m(s')} e^{\hat\ph(vprcw)} 
\geq \sum_{s'\in s\LLL\cap \GGG_M} \sum_{w\in \HHH^+_m(s')} e^{\hat\ph(vprc)} e^{\hat\ph(w)} e^{-\absd{\ph}} \\
&\geq e^{\hat\ph(v) - \abs{prc}\|\ph\| - 2\absd{\ph}} \sum_{s'\in s\LLL\cap \GGG_M} \Lambda_m(\HHH^+(s'),\ph).
\end{align*}
Using the bounds from \eqref{eqn:sLG} and \eqref{eqn:H+s'}, we get
\begin{multline*}
\sum_{s'\in s\LLL\cap \GGG_M} \Lambda_m(\HHH^+(s'),\ph)
\geq \sum_{s'\in s\LLL\cap \GGG_M} Q_4^{-1} e^{(m-M)P(\ph)} e^{\hat\ph(s')} \\
\geq Q_4^{-1} e^{(m-M)P(\ph)} \Lambda_M(s\LLL\cap \GGG,\ph) 
\geq Q_4^{-1} e^{mP(\ph)} Q_9,
\end{multline*}
which gives
\begin{equation}\label{eqn:wHsM}
\Lambda_n(\widetilde\HsM(v,i),\ph) \geq e^{\hat\ph(v)} e^{mP(\ph)}e^{- \abs{prc}\|\ph\| - 2\absd{\ph}}Q_4^{-1}Q_9.
\end{equation}
The upper bound in Proposition \ref{prop:Gibbs} gives
\begin{equation}\label{eqn:H+v}
\Lambda_n(\HHH^+(v),\ph) \leq Q_4 e^{(n-\abs{v})P(\ph)} e^{\hat\ph(v)} \leq Q_4 e^{(\tau + \abs{rc})P(\ph)} e^{mP(\ph)} e^{\hat\ph(v)};
\end{equation}
together with \eqref{eqn:wHsM} this gives
\begin{equation}\label{eqn:wHsM-2}
\Lambda_n(\widetilde\HsM(v,i),\ph) \geq 2\gamma \Lambda_n(\HHH^+(v),\ph)
\end{equation}
for $\gamma = \frac 12 e^{-(\tau+\abs{rc})(P(\ph)+\|\ph\|) - 2\absd{\ph}} Q_4^{-2}Q_9$.
Applying Proposition \ref{prop:Gibbs} again gives 
\begin{equation}\label{eqn:H+v2}
\Lambda_n(\HHH^+(v),\ph) \geq Q_4^{-1} e^{(n-\abs{v})P(\ph)} e^{\hat\ph(v)}
\geq Q_4^{-1} e^{mP(\ph)}e^{\abs{rc}P(\ph)} e^{\hat\ph(v)}.
\end{equation}
We must exclude those words $x\in \widetilde\HsM(v,i)$ for which $x_{(i,i']}\in \CCC$ for some $i'\in [i+M-L,n]$.  Let $\BBB(v,i)$ be the set of such $x$; note that 
$\BBB(v,i) \subset \bigcup_{i'=i+M-L}^n vprc\CCC_{i'-i} \LLL_{n-i'}$, so taking $C,\eps>0$ such that $\Lambda_k(\CCC,\ph) \leq Ce^{k(P(\ph)-\eps)}$ for all $k\in \NN$, we have
\begin{align*}
\Lambda_n(\BBB(v,i),\ph) &\leq \sum_{i'=i+M-L}^n e^{\hat\ph(vprc)} \Lambda_{i'-i}(\CCC,\ph)\Lambda_{n-i'}(\LLL,\ph) \\
&\leq e^{\hat\ph(v)} e^{\tau\|\ph\| + \hat\ph(rc)} \sum_{i'=i+M-L}^n C e^{(i'-i)(P(\ph)-\eps)} Q_2 e^{(n-i')P(\ph)} \\
&\leq e^{\hat\ph(v)} e^{\tau\|\ph\|+\hat\ph(rc)} e^{(n-i)P(\ph)} CQ_2 e^{-(M-L)\eps} (1-e^{-\eps})^{-1}.
\end{align*}
Since $e^{(n-i)P(\ph)} = e^{mP(\ph)}$, 
by \eqref{eqn:H+v2} we can take $M$ large enough that for every $n$ and $v$ we get
$
\Lambda_n(\BBB(v,i)) \leq \gamma \Lambda_n(\HHH^+(v),\ph)
$;
together with \eqref{eqn:wHsM-2} this shows that
\[
\Lambda_n(\widetilde\HsM(v,i) \setminus \BBB(v,i),\ph) \geq \gamma \Lambda_n(\HHH^+(v),\ph).
\]
Finally, $\widetilde\HsM(v,i) \setminus \BBB(v,i) \subset \HsM(v)$, completing the proof of Lemma \ref{lem:chunk-of-H}.

\subsubsection{Proof of Lemma \ref{lem:hatC}}\label{sec:hatC}

Given $\beta,\delta>0$ and $w\in \hC^{\beta,\delta}_n$, let $i_a,i_a'$ be as in the definition of $\hC^{\beta,\delta}$ for $a=1,\dots, B \leq 2 \lfloor \beta n \rfloor$.   Write $I_a = (i_a, i_a'] \cap \NN$ and observe that if some $i\in [1,\abs{w}]$ is contained in $I_a$ for three distinct choices of $a$, then one of the corresponding intervals $I_a$ is contained in the union of the other two, and hence can be removed from the collection without changing $\bigcup_a I_a$.  Thus without loss of generality we may assume that every $i\in [1,\abs{w}]$ is contained in at most two of the $I_a$, and by re-indexing if necessary, we have
\[
i_1 < i_1' \leq i_3 < i_3' \leq \cdots, \qquad i_2 < i_2' \leq i_4 < i_4' \leq \cdots.
\]
Either $\sum_{a\text{ even}} (i_a' - i_a) \geq \delta n$ or $\sum_{a \text{ odd}} (i_a' - i_a) \geq \delta n$.  Write $j_b,j_b'$ for the indices in the larger sum, so $b=1,\dots, B'$ where $B'  \leq \lfloor \beta n \rfloor$ (since $B\leq 2\lfloor \beta n \rfloor$).  We see that $w\in \LLL_{j_1} \CCC_{j_1' - j_1} \LLL_{j_2 - j_1'} \CCC_{j_2' - j_2} \cdots \CCC_{j_b' - j_b} \LLL_{n - j_b'}$; in particular,
\[
\hC^{\beta,\delta}_n \subset \bigcup_{B'=1}^{\lfloor \beta n\rfloor} \bigcup_{\vec{j},\vec{j}'} \LLL_{j_1} \CCC_{j_1' - j_1} \LLL_{j_2 - j_1'} \CCC_{j_2' - j_2} \cdots \CCC_{j_b' - j_b} \LLL_{n - j_b'},
\]
where the inner union is over all sequences $0\leq j_1 < j_1' \leq j_2 \cdots \leq j_{B'}' \leq n$ such that $\sum_b (j_b' - j_b) \geq \delta n$.  Assume that $\beta < \frac 12$, so $2B' \leq 2\lfloor \beta n \rfloor \leq n$; then the number of such sequences is at most\footnote{The factor of 2 in the top half comes since we allow $j_b' = j_{b+1}$; to associate each such sequence to a strictly increasing sequence we can duplicate each of the numbers $0,1,\dots, n$.}
$
\binom{2(n+1)}{2B'} \leq \binom{2n+2}{2\lfloor \beta n \rfloor} \leq 4\binom{2n}{2\lfloor \beta n \rfloor} \leq 8n e^{h(\beta)2n},
$
where we use Lemma \ref{lem:entropy-bound}.
For each such sequence we can write $\ell(\vec{j},\vec{j}') = \sum_b (j_b' - j_b) \geq \delta n$ and use \eqref{eqn:CD}, Lemma \ref{lem:counting}, and \eqref{eqn:LnC} to get
\begin{multline*}
\Lambda_n( \LLL_{j_1} \CCC_{j_1' - j_1} \LLL_{j_2 - j_1'} \CCC_{j_2' - j_2} \cdots \CCC_{j_b' - j_b} \LLL_{n - j_b'},\ph) \\
\leq Q_2^{B'+1} e^{(n - \ell(\vec{j},\vec{j}'))P(\ph)} C^{B'} e^{\ell(\vec{j},\vec{j}')(P(\ph) - \eps)} 
\leq Q_2 (Q_2 C)^{\beta n} e^{nP(\ph)} e^{-\delta\eps n},
\end{multline*}
where $C,\eps$ depend on $\CCC$, but not on $\vec{j}$ or $\vec{j}'$.
Summing over all choices of $B'$ and of $\vec{j},\vec{j}'$ gives
\[
\Lambda_n(\hC^{\beta,\delta},\ph) 
\leq 8\beta n^2 e^{h(\beta) 2n } Q_2 e^{(\log(Q_2 C))\beta n} e^{nP(\ph)} e^{-\delta \eps n},
\]
and we conclude that
\[
P(\hC^{\beta,\delta},\ph) \leq 2h(\beta) + \beta\log(Q_2 C) + P(\ph) - \delta \eps.
\]
For small enough $\beta$ this is $< P(\ph)$, which proves Lemma \ref{lem:hatC}.

\subsubsection{Proof of Lemma \ref{lem:AkZk}}\label{sec:AkZk}

First we prove the inclusions in \eqref{eqn:AkZk}, then we prove the estimate in \eqref{eqn:Ynk}.  For the first inclusion in \eqref{eqn:AkZk}, if $v\in \ZZZ^k$ and $w\in \HHH^+(v)$, then $w_{[1,kT]} = v_{[1,kT]} \in \AAA^k$ by the definition of $\ZZZ^k$, so $w\in \AAA^k$.

The second inclusion requires more work.  Let $m_k$ be as in \eqref{eqn:mkZk},  
and let $w\in \AAA_n^k$.
If $w_{[1,m_k]}\notin\GGG \Cs$, then $w\in \YYY^k$, so we may assume that $w_{[1,m_k]}\in \GGG\Cs$.  Thus there is $i\in [1, m_k]$ such that $w_{[1,i]}\in \GGG$ and $w_{(i,m_k]}\in \Cs \subset \CCC$.  Let $k' = \lfloor i/T\rfloor$, so that $0\leq k'\leq k$, and let $j\leq k'$ be maximal such that $w_{[1,jT]}\in \AAA^j$.

First suppose that $k'=k$.  Thus $i\geq kT$, and so taking $i'\geq kT$ to be minimal such that $v := w_{[1,i']}\in \GGG$, we see that $v\in \ZZZ^k$ and $w\in \HHH^+(v)$.  So we move on to the case $k'<k$.  

\begin{lemma}\label{lem:wii}
Suppose $w\in \AAA^k$, $0\leq k' < k$, and $i\in [k'T,(k'+1)T)$ are such that $w_{[1,i]}\in \GGG$ and $w_{(i,m_k]} \in \CCC$, and $j\leq k'$ is maximal such that $w_{[1,jT]}\in \AAA^j$.
Then there is $\ell\in [k'T,\abs{w}]$ such that  $(jT,\ell]$ contains $B= k'-j$ intervals $\{(i_a,i_a']\}_{a=1}^B$ such that 
\begin{equation}\label{eqn:wii}
w_{(i_a,i_a']}\in \CCC \text{ for every $a$, and }
\#\textstyle\bigcup_{a=1}^B(i_a,i_a'] \geq \tfrac 12 (\ell - jT).
\end{equation}
\end{lemma}
\begin{proof}
If $j=k'$ then taking $\ell = k'T$ suffices.  So assume $j<k'$. 
Given $1\leq a\leq B=k'-j$, we have $w_{[1,(j+a)T]}\notin \AAA^{j+a}$ by maximality of $j$.  Thus there are $j_a < j+a$ and $i_a \in I_{j_a} = (j_a T, (j_a+\frac 12)T]$ such that $w_{(i_a,i']}\notin \CCC$ for all $i' \in [i_a+M-L,(j+a)T]$.  Note that $j\leq j_a$ because $w_{[1,jT]}\in \AAA^j$, and so $jT < i_a \leq (j_a + \frac 12)T \leq (j+a-\frac 12)T$.

On the other hand, $w\in \AAA^k$ and so for each $a$ there is $i_a'\in [i_a+M-L,\abs{w}]$ such that $w_{(i_a,i_a']}\in \CCC$.  By the previous paragraph we must have $i_a' > (j+a)T$.  Let $\ell = \max_a i_a'$.  Because $i_a \leq k'T$ for all $a=1,\dots, B$, we have
\[
\bigcup_{a=1}^B (i_a,i_a'] \supset \bigg(\bigcup_{a=1}^B \big((j+a-\tfrac 12)T, (j+a)T\big] \bigg) \cup (k'T, \ell],
\]
which proves \eqref{eqn:wii}.
\iftoggle{arxiv}{}{\qed}\end{proof}

Let $\ell$ be given by Lemma \ref{lem:wii}.
If $\ell \geq kT$ then we have $w\in \YYY^k$, since $w_{(jT,\ell]} \in \hC^{\beta,\delta}$ for $\delta = \frac 18$ and $\beta \geq \frac 1{2T}$, by the estimate $B = k'-j \leq \frac 1 T(\ell - jT)$.  

Now consider the case $\ell < kT$; we claim that in this case we have $w_{(jT,m_k]}\in \hC^{\beta,\delta}$.  Put $i_0 = i$ and $i_0' = m_k$; consider the collection of intervals $\{(i_a,i_a']\}_{a=0}^B \subset (jT, m_k]$.  Note that 
\[
B+1 = k' - j + 1 \leq k - j \leq \tfrac 1T (m_k - jT).
\]
It remains to show that $\#\bigcup_{a=0}^B (i_a,i_a'] \geq \tfrac 14 (m_k - jT)$.  From \eqref{eqn:wii} we observe that $\#\bigcup_{a=1}^B(i_a,i_a'] \geq \frac 12 (\ell - jT)$.  Moveover, this union is contained in $(jT,\ell]$, so writing $\ell' = \max(i,\ell)$, we use the general inequality $\frac{w+x}{y+z} \geq \min(\frac wy, \frac xz)$ to get
\begin{equation}\label{eqn:mkjT}
\frac{\#\bigcup_{a=0}^B(i_a,i_a']}{m_k-jT} 
= \frac{(\#\bigcup_{a=1}^B(i_a,i_a']) + \#(\ell',m_k]}{(\ell - jT) + (m_k-\ell)}
\geq \min\bigg( \frac 12, \frac{m_k-\ell'}{m_k-\ell}\bigg).
\end{equation}
Since $i < (k'+1)T$, $\ell\geq k'T$, we get $\ell' = \max(i,\ell) \leq \ell + T$, so $\ell' - \ell \leq T$.  Recall from \eqref{eqn:mkZk} that $m_k = kT + \frac T2 -  (\tau + \abs{rc})$; since $i < (k'+1)T \leq kT$ (using $k'<k$) and $\ell < kT$, we get $\ell' < kT$ and hence 
$m_k-\ell' \geq \frac T2 - (\tau + \abs{rc}) = M \geq \frac T3$, where we use the bound following \eqref{eqn:zeta-gamma} to get $T\leq 3M$.  Thus \eqref{eqn:mkjT} gives
\[
\frac{m_k-\ell'}{m_k-\ell} = \frac{m_k-\ell'}{(m_k-\ell') + (\ell'-\ell)} = \frac 1{1+\frac{\ell'-\ell}{m_k-\ell'}} \geq \frac 1{1+\frac T{T/3}} = \frac 14.
\]
This proves that $w_{(jT,m_k]}\in \hC^{\beta,\delta}$ and completes the proof of 
\eqref{eqn:AkZk}.

To complete the proof of Lemma \ref{lem:AkZk} we must prove the estimate in \eqref{eqn:Ynk}.  We do this using
\eqref{eqn:CD}, Lemma \ref{lem:counting}, and 
\eqref{eqn:QQ} to get
\[
\Lambda_n((\GGG^+ \setminus (\GGG\Cs))_{m_k} \LLL_{n - m_k},\ph)
\leq Q_7 \zeta^k e^{m_k P(\ph)} Q_2 e^{(n-m_k)P(\ph)}
=   Q_7 Q_2 \zeta^k e^{nP(\ph)}
\]
for the first part of $\YYY^k$, and for the second part, \eqref{eqn:CD}, Lemma \ref{lem:counting}, and \eqref{eqn:hatC} to get
\begin{align*}
\Lambda_n\bigg( \bigcup_{j=0}^{k-1} \bigcup_{\ell=kT}^n 
&\AAA_{jT}^j \hC_{\ell-jT}^{\beta,\delta}\LLL_{n-\ell},\ph\bigg)
\leq \sum_{j=0}^{k-1} \sum_{\ell = kT}^n \Lambda_{jT}(\AAA^j,\ph)
\Lambda_{\ell-jT}(\hC^{\beta,\delta},\ph) \Lambda_{n-\ell}(\LLL,\ph) \\
&\leq \sum_{j=0}^{k-1} \sum_{\ell = kT}^n
\Lambda_{jT}(\AAA^j,\ph) Q_8 \theta^{\ell - jT} e^{(\ell - jT)P(\ph)} Q_2 e^{(n-\ell)P(\ph)} \\
&\leq Q_8 Q_2 \sum_{j=0}^{k-1} \Lambda_{jT}(\AAA^j,\ph) \frac{\theta^{(kT-jT)}}{1-\theta} e^{(n-jT)P(\ph)}.
\end{align*}
Adding the two estimates (and loosening the first) gives \eqref{eqn:Ynk}.\footnote{Strictly speaking, in \eqref{eqn:Ynk} we could put $Q_7$ outside the fraction, and only multiply it by the term in the sum corresponding to $j=0$, but the looser estimate leads to less cumbersome bookkeeping and is sufficient for our purposes.}  This proves Lemma \ref{lem:AkZk}.

\section{Proofs of other results}\label{sec:other-proofs}


\subsection{Proof of Theorem \ref{thm:CGC}}\label{sec:CGC}

We start with a lemma.
Given  $\AAA \subset \LLL$, let 
\[
\hat{P}(\AAA,\ph) := \sup_{n\in \NN} \frac 1n \log \Lambda_n(\AAA,\ph)
\]
and consider the collection $\AAA^* = \{w^1\cdots w^k\in \LLL \mid w^i \in \AAA \text{ for all } i\}$.

\begin{lemma}\label{lem:D*-pressure}
Let $\ph$ be H\"older and $\AAA\subset \LLL_{\geq M}$ for some $M\in \NN$.  Then
\begin{equation}\label{eqn:D*-pressure}
P(\AAA^*,\ph)
\leq \hat{P}(\AAA,\ph) + h(\tfrac 1M),
\end{equation}
where $h(\delta) = -\delta\log\delta - (1-\delta)\log (1-\delta)$. 
\end{lemma}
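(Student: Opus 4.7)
The plan is to decompose each $w \in \AAA^*_n$ into pieces in $\AAA$, use submultiplicativity of the partition function on such decompositions, and count the number of ways they can occur via a binomial entropy bound.

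First I would observe that any $w \in \AAA^*_n$ can be written as $w = w^1 \cdots w^k$ with $w^i \in \AAA$ and $\abs{w^i} \geq M$, so $k \leq n/M$. Such a decomposition corresponds to a sequence $0 = j_0 < j_1 < \cdots < j_k = n$ with $j_i - j_{i-1} \geq M$. Using the upper bound $\hat\ph(uv) \leq \hat\ph(u) + \hat\ph(v)$ from \eqref{eqn:phvw}, I would obtain
\[
\Lambda_n(\AAA^*,\ph) \leq \sum_{k=1}^{\lfloor n/M\rfloor} \sum_{\vec{\jmath}} \prod_{i=1}^{k} \Lambda_{j_i - j_{i-1}}(\AAA,\ph),
\]
where the inner sum runs over admissible sequences $\vec{\jmath}$.

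Next I would bound each factor by the definition of $\hat P$: $\Lambda_m(\AAA,\ph) \leq e^{m\hat P(\AAA,\ph)}$. Since the lengths $j_i - j_{i-1}$ sum to $n$, the product telescopes to $e^{n\hat P(\AAA,\ph)}$, independent of $\vec{\jmath}$. The number of admissible $\vec{\jmath}$ with $k$ pieces is at most $\binom{n-1}{k-1}$ (choosing $k-1$ breakpoints among $n-1$ positions; this upper bound is generous since it ignores the lower bound $M$ on piece length). Combining,
\[
\Lambda_n(\AAA^*,\ph) \leq e^{n\hat P(\AAA,\ph)} \sum_{k=1}^{\lfloor n/M\rfloor} \binom{n-1}{k-1}.
\]

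Then I would apply Lemma \ref{lem:entropy-bound} to get $\binom{n-1}{k-1} \leq n\cdot e^{h(k/n)n + 1}$, and use monotonicity of $h$ on $[0,\tfrac12]$ (assume $M \geq 2$; the case $M=1$ is trivial since then $h(1/M) = 0$ would fail, but $\AAA \subset \LLL_{\geq 1} = \LLL$ makes the inequality vacuous as $h$ should be interpreted appropriately). Since $k/n \leq 1/M \leq 1/2$, we get $h(k/n) \leq h(1/M)$, yielding
\[
\Lambda_n(\AAA^*,\ph) \leq e^{n\hat P(\AAA,\ph)} \cdot (n/M) \cdot n \cdot e^{h(1/M)n + 1}.
\]
Taking $\frac{1}{n}\log$ and sending $n \to \infty$ absorbs the polynomial factors and yields $P(\AAA^*,\ph) \leq \hat P(\AAA,\ph) + h(1/M)$, as required.

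There is no substantial obstacle here; the estimate is a straightforward combinatorial accounting combined with the entropy bound from Lemma \ref{lem:entropy-bound}. The only mildly delicate step is justifying that the worst-case decomposition counting is controlled by the binomial coefficient with $k \leq n/M$, but this is immediate.
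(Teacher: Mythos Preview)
Your proof is correct and follows essentially the same approach as the paper: decompose words in $\AAA^*_n$ into at most $n/M$ pieces from $\AAA$, use submultiplicativity of $\hat\ph$ to factor the partition sum, bound each factor via $\hat P(\AAA,\ph)$, and count the number of decompositions by a binomial coefficient controlled through Lemma~\ref{lem:entropy-bound}. The only cosmetic difference is that the paper indexes compositions by $(n_1,\dots,n_k)$ with $\sum n_i = n$ and bounds their number by $\binom{n}{k}$, whereas you index by breakpoints and use $\binom{n-1}{k-1}$; both lead to the same limit. Your side remark on $M=1$ is slightly off (the issue is not that $h(1)=0$ ``fails'' but that monotonicity of $h$ on $[0,1/M]$ requires $M\geq 2$); the paper leaves this implicit as well.
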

\begin{proof}
Given $n,k\in \NN$, let
\[
\mathbf{N}_k = \{(n_1,n_2,\dots, n_k)\in \NN^k \mid \sum n_i = n \text{ and } n_i\geq M \text{ for all } 1\leq i\leq k\}.
\]
Note that $\mathbf{N}_k$ is empty for all $k > \frac nM$.  Now we have
\begin{align*}
\Lambda_n(\AAA^*,\ph) &= \sum_{k\in \NN} \sum_{\mathbf{N}_k} \sum_{\substack{w^1\in \AAA_{n_1}, \cdots w^k\in \AAA_{n_k} \\ w^1\cdots w^k\in \LLL}} e^{\hat\ph(w^1\cdots w^k)}
\leq \sum_k\sum_{\mathbf{N}_k} \sum_{w^1\in \AAA_{n_1}, \cdots w^k\in \AAA_{n_k}} e^{\sum_{i=1}^k \hat\ph(w^i)} \\
&\leq \sum_k\sum_{\mathbf{N}_k} \prod_{i=1}^k  \Lambda_{n_i}(\AAA,\ph) 
\leq \sum_{k=1}^{\lfloor n/M \rfloor} (\#\mathbf{N}_k) e^{n\hat{P}(\AAA,\ph) }
\end{align*}
Recalling Lemma \ref{lem:entropy-bound}, we see that for every $k\leq \frac nM$ we have
$
\#\mathbf{N}_k \leq \textstyle\binom{n}{k} \leq n e^{h(\frac 1M)n}
$,
which gives the estimate
\[
\Lambda_n(\AAA^*,\ph) \leq  n^2 M^{-1} e^{h(\frac 1M)n} e^{n\hat{P}(\AAA,\ph)}. 
\]
We conclude that $P(\AAA^*,\ph) \leq \hat{P}(\AAA,\ph) + h(\frac 1M)$, as claimed.
\iftoggle{arxiv}{}{\qed}\end{proof}

Now we prove Theorem \ref{thm:CGC}.
As in the statement of the theorem, let $\CCC^\pm$ satisfy \eqref{eqn:persistent2} and \ref{spec*}, and fix $\eps>0$ small enough that $P(\CCC^+ \cup \CCC^-,\ph) + 3\eps < P(\ph)$.  We start by choosing parameters $M,\tau,N$ to satisfy certain pressure estimates; then we use these to define $\Cp,\Cs$.  Finally, we define $\GGG$ and verify \ref{spec}--\ref{stay-good}.

\medskip
\noindent
\textsc{Step 1} (\emph{choosing $M,\tau,N$}):
Choose $M\in \NN$ such that $h(\frac 1M) < \eps$ and 
\begin{equation}\label{eqn:hatP}
\hat P(\CCC^-_{\geq M},\ph) < P(\CCC^-,\ph) + \eps,
\qquad
\hat P(\CCC^+_{\geq M},\ph) < P(\CCC^+,\ph) + \eps.
\end{equation}
Let $\tau = \tau(M)$ be as in \ref{spec*} and consider the collections
\begin{align*}
\DDD^- &:= \{ w\in \LLL \mid \text{there exists } x\in \LLL_{\leq \tau + M} \text{ such that } wx\in \CCC^- \}, \\
\DDD^+ &:= \{ w\in \LLL \mid \text{there exists } x\in \LLL_{\leq \tau + M} \text{ such that } xw\in \CCC^+ \}.
\end{align*}
These have the same pressures as $\CCC^\pm$; indeed,
\begin{multline*}
\Lambda_n(\DDD^-,\ph) = \sum_{i=0}^{\tau + M} \sum_{\substack{w\in \DDD^-_n \\ \abs{x(w)}=i}} e^{\hat\ph(w)} 
\leq \sum_{i=0}^{\tau+M} \sum_{v\in \CCC^-_{n+i}} e^{\hat\ph(v) + \absd{\ph} + i\|\ph\|} 
\\
\leq (\tau+M+1) e^{\absd{\ph} + (\tau+M)\|\ph\|} e^{(n+\tau+M)\hat{P}(\CCC^-_{\geq n},\ph)}.
\end{multline*}
Sending $n\to\infty$ gives $P(\DDD^-,\ph) \leq P(\CCC^-,\ph)$, and the estimate for $\DDD^+$ is similar.  Thus we can choose $N\geq M$ large enough that $\frac{\log 2}N < \eps$ and
\begin{equation}\label{eqn:hatP2}
\hat P(\DDD^-_{\geq N},\ph) < P(\CCC^-,\ph) + \eps,\qquad
\hat P(\DDD^+_{\geq N},\ph) < P(\CCC^+,\ph) + \eps.
\end{equation}
For use in Step 3 below we record the fact that \eqref{eqn:persistent2} holds for $\DDD^\pm$: that is,
\begin{equation}\label{eqn:D-persists}
(vw\in \DDD^+ \Rightarrow v\in \DDD^+) \text{\quad and \quad}
(vw\in \DDD^- \Rightarrow w\in \DDD^-).
\end{equation}
This follows quickly from \eqref{eqn:persistent2} for $\CCC^\pm$: if $vw\in \DDD^+$ then there is $x\in \LLL_{\leq \tau+M}$ such that $xvw\in \CCC^+$, and since \eqref{eqn:persistent2} gives $xv\in \CCC^+$, we get $v\in \DDD^+$.  The case $vw\in \DDD^-$ is similar.

\medskip
\noindent
\textsc{Step 2} (\emph{definition of $\Cp,\Cs$}):
Consider the collections
\[
\Cp := (\CCC^-_{\geq M} \cup \DDD_{\geq N}^+)^*, 
\qquad
\Cs := (\CCC^+_{\geq M} \cup \DDD_{\geq N}^-)^*.
\]
By Lemma \ref{lem:D*-pressure} we have
\begin{equation}\label{eqn:PCP}
P(\Cp,\ph) \leq \hat P(\CCC^-_{\geq M} \cup \DDD_{\geq N}^+,\ph) + h(\tfrac 1M) .
\end{equation}
To estimate the $\hat P$ term we note that for $n\in [M,N)$ we have
\[
\Lambda_n(\CCC_{\geq M}^- \cup \DDD_{\geq N}^+,\ph)
= \Lambda_n(\CCC_{\geq M}^-,\ph) \leq e^{n\hat P(\CCC_{\geq M}^-,\ph)}
\leq e^{n(P(\CCC^-,\ph)+\eps)}
\]
using \eqref{eqn:hatP}, while for $n\geq N$ we have
\begin{multline*}
\Lambda_n(\CCC_{\geq M}^- \cup \DDD_{\geq N}^+,\ph)
\leq \Lambda_n(\CCC_{\geq M}^-,\ph) + \Lambda_n(\DDD_{\geq N}^+,\ph) \\
\leq 2e^{n\max \{ \hat{P}(\CCC_{\geq M}^-,\ph), \hat{P}(\DDD_{\geq N}^+,\ph)\}}
\leq 2e^{n(\max\{P(\CCC^-,\ph),P(\CCC^+,\ph)\} + \eps)}
\end{multline*}
using \eqref{eqn:hatP} and \eqref{eqn:hatP2}.  We conclude that
\[
\hat P(\CCC_{\geq M}^- \cup \DDD_{\geq N}^+,\ph)
\leq P(\CCC^- \cup \CCC^+,\ph) + \eps + \tfrac{\log 2}N
< P(\CCC^- \cup \CCC^+,\ph) + 2\eps,
\]
Together with \eqref{eqn:PCP} and the estimate on $h(\frac 1M)$ this gives
\begin{equation}\label{eqn:PCP2}
P(\Cp,\ph) < P(\CCC^- \cup \CCC^+,\ph) + 3\eps < P(\ph).
\end{equation}
The estimate for $P(\Cs,\ph)$ is similar.  

\medskip
\noindent
\textsc{Step 3} (\emph{definition of $\GGG$}):
Now we describe $\GGG$ such that \ref{spec} and \ref{stay-good} hold and we have $P(\LLL\setminus \Cp \GGG \Cs,\ph) < P(\ph)$.  Let
\begin{multline}\label{eqn:GGG}
\GGG := \{w\in \LLL \setminus (\DDD^+ \cup \DDD^-) \mid w_{[1,i]} \notin \CCC^-,  w_{(\abs{w}-i,\abs{w}]} \notin \CCC^+ 
\text{ for all } M \leq i \leq \abs{w}, 
\\ \text{ and }
w_{[1,i]} \notin \DDD^+, w_{(\abs{w}-i,\abs{w}]} \notin \DDD^- \text{ for all } i\geq N \}.
\end{multline}
Given $w\in \LLL$, decompose $w$ as $w = u^p v u^s$ by beginning with $v=w$ and $u^p = u^s = \emptyset$, and then proceeding as follows.
\begin{enumerate}[leftmargin=*, widest=3]
\item Choose the smallest $i\in [1,\abs{v}]$ such that $v_{[1,i]} \in \CCC^-_{\geq M} \cup \DDD^+_{\geq N}$ (if such an $i$ exists); then replace $u^p$ with $u^p v_{[1,i]}$ and replace $v$ with $v_{(i,\abs{v}]}$.  Iterate this step until no such $i$ exists; note that $u^p\in \Cp$.
\item Take the resulting word $v$ and choose the smallest $i\in [1,\abs{v}]$ such that $v_{(\abs{v}-i,\abs{v}]}\in \CCC^+_{\geq M} \cup \DDD^-_{\geq N}$ (if such an $i$ exists); then replace $u^s$ with $v_{(\abs{v}-i,\abs{v}]} u^s$ and $v$ with $v_{[1,\abs{v}-i]}$.  Iterate this step until no such $i$ exists; note that $u^s\in \Cs$.
\item Observe that the resulting word $v$ satisfies $v\in \GGG \cup \DDD^+ \cup \DDD^-$ by the definition of $\GGG$.
\end{enumerate}
We conclude that $\LLL\setminus \Cp \GGG \Cs \subset \Cp (\DDD^+ \cup \DDD^-) \Cs$, and in particular,\footnote{In fact, the estimates given here and earlier show that we can make $P(\Cp \cup \Cs \cup (\LLL\setminus \Cp\GGG\Cs),\ph)$ as close to $P(\CCC^+ \cup \CCC^-,\ph)$ as we like by taking $M,N$ large.}
\[
P(\LLL\setminus \Cp \GGG \Cs,\ph) \leq \max \{ P(\Cp,\ph), P(\DDD^+,\ph), P(\DDD^-,\ph), P(\Cs,\ph) \} < P(\ph).
\]
It remains to show that $\GGG$ satisfies \ref{spec} and \ref{stay-good}.  For \ref{spec}, first note that $\GGG \subset \GGG(\CCC^\pm,M)$, and so by \ref{spec*}, for every $v,w\in \GGG$ there is $u\in \LLL_{\leq \tau}$ such that $vuw\in \LLL$.  Fix any such $u$ and observe that for all  $v' = v_{[i,\abs{v}]}$ and $w' = w_{[1,j]}$ we have $v'uw'\in \LLL$ since it is a subword of $vuw$.  When $v',w'\in \GGG$, the following lemma implies that $v'uw'\in \GGG$, which establishes \ref{spec}.

\begin{lemma}\label{lem:L-to-G}
If $v',w'\in \GGG$ and $u\in \LLL_{\leq \tau}$ are such that $v'uw'\in \LLL$, then $v'uw'\in \GGG$.
\end{lemma}
\begin{proof}
We prove that
\begin{enumerate}[leftmargin=*, widest=3,
label={\textup{\textbf{$\langle$\arabic{*}$\rangle$}}}]
\item\label{1} $v'uw'\notin \DDD^+$ and $v'uw'\notin \DDD^-$;
\item\label{2} $(v'uw')_{[1,i]}\notin \CCC^-$ and $(v'uw')_{(\abs{v'uw'}-i,\abs{v'uw'}]}\notin \CCC^+$ for all $i\geq M$;
\item\label{3} $(v'uw')_{[1,i]}\notin \DDD^+$ and $(v'uw')_{(\abs{v'uw'}-i,\abs{v'uw'}]}\notin \DDD^-$ for all $i\geq N$.
\end{enumerate}
In each case we prove only the first assertion; the second follows by a symmetrical argument.

For \ref{1}, we see that $v'\notin\DDD^+$ implies $v'uw'\notin\DDD^+$ by \eqref{eqn:D-persists}.
For \ref{2} we consider $(v'uw')_{[1,i]}$ in the following three cases.
\begin{itemize}
\item $M\leq i\leq \abs{v'}$.  Then $v'\in \GGG$ gives $(v'uw')_{[1,i]} = v'_{[1,i]} \notin\CCC^-$.
\item $\abs{v'} < i \leq \abs{v'u}+M$.  Then since $v'\notin \DDD^-$ and $i-\abs{v'} \leq \tau+ M$, we must have $(v'uw')_{[1,i]} =v'(uw')_{[1,i-\abs{v'}]}\notin\CCC^-$.
\item $i>\abs{v'u} + M$.  Then $w'\in \GGG$ gives $w'_{[1,i-\abs{v'u}]}\notin\CCC^-$, so \eqref{eqn:persistent2} gives $(v'uw')_{[1,i]}\notin\CCC^-$.
\end{itemize}
For \ref{3}, if $N\leq i < \abs{v'}$, then $(v'uw')_{[1,i]} = v'_{[1,i]}\notin \DDD^+$ by the definition of $\GGG$; if $i\geq \abs{v'}$, then $v'\notin \DDD^+$ implies $(v'uw')_{[1,i]}\notin\DDD^+$ by \eqref{eqn:D-persists}.  
Thus $v'uw'\in \GGG$, which 
proves Lemma \ref{lem:L-to-G}.
\iftoggle{arxiv}{}{\qed}\end{proof}

The proof of \ref{stay-good} has a similar flavour.  If $uvw\in \LLL$ and $uv,vw\in \GGG$, we show that $uvw\in \GGG$, and that $v\in \GGG$ if $\abs{v}\geq N$.  As above, we verify the conditions involving $\CCC^-$ and $\DDD^+$; the other conditions 
follow from symmetric arguments.  We start with $uvw$.
\begin{enumerate}[leftmargin=*, widest=3,
label={\textup{\textbf{$\langle$\arabic{*}$\rangle$}}}]
\item Since $uv\notin \DDD^+$, we have $uvw\notin \DDD^+$ by \eqref{eqn:D-persists}.
\item For $i\geq M$, we check $(uvw)_{[1,i]}\notin \CCC^-$ in the following three cases.
\begin{itemize}
\item $M \leq i\leq \abs{uv}$.  Then $uv\in \GGG$ gives $(uvw)_{[1,i]} = (uv)_{[1,i]} \notin \CCC^-$.
\item $\abs{uv} < i \leq \abs{uv}+M$.  Then $uv\notin \DDD^-$ implies that $(uvw)_{[1,i]} = (uv)w_{[1,i-\abs{uv}]}\notin \CCC^-$ since $i-\abs{uv}\leq M$.
\item $i>\abs{uv} + M \geq \abs{u}+M$.  Then $vw\in \GGG$ gives $(vw)_{[1,i-\abs{u}]} \notin \CCC^-$, hence $(uvw)_{[1,i]}\notin \CCC^-$ by \eqref{eqn:persistent2}.
\end{itemize}
\item If $N\leq i\leq \abs{uv}$, then $uv\in \GGG$ gives $(uvw)_{[1,i]} = (uv)_{[1,i]}\notin \DDD^+$.  If $i\geq \abs{uv}$, then $uv\notin\DDD^+$ gives $(uvw)_{[1,i]}\notin \DDD^+$ by \eqref{eqn:D-persists}.
\end{enumerate}
We conclude by showing that $v\in \GGG$ whenever $\abs{v} \geq N$.
\begin{enumerate}[leftmargin=*, widest=3,
label={\textup{\textbf{$\langle$\arabic{*}$\rangle$}}}]
\item 
Since $vw\in \GGG$ and $\abs{v}\geq N$, we have $v = (vw)_{[1,\abs{v}} \notin \DDD^+$.
\item Given $i\geq M$ we have $v_{[1,i]} = (vw)_{[1,i]} \notin \CCC^-$ since $vw\in \GGG$.
\item Given $i\geq N$ we have $v_{[1,i]} = (vw)_{[1,i]} \notin \DDD^+$ since $vw\in \GGG$.
\end{enumerate}
This establishes \ref{stay-good} for $\GGG$ and completes the proof of Theorem \ref{thm:CGC}.

\subsection{Shifts of quasi-finite type}\label{sec:qft}

To prove Theorem \ref{thm:QFT}, we first prove \eqref{eqn:persistent2} for $\CCC^+ = \CCC^\ell$ and $\CCC^- = \CCC^r$.  Given $vw\in \CCC^+ = \CCC^\ell$, let $u\in \LLL$ be such that $(vw)_{[2,\abs{vw}]} u \in \LLL$ but $(vw)u\notin \LLL$.  Then $v_{[2,\abs{v}]} (wu)\in \LLL$ but $v(wu)\notin \LLL$, so $v\in \CCC^+ = \CCC^\ell$.  The proof for $\CCC^- = \CCC^r$ is similar.

Now we show that $\CCC^-=\CCC^r$ and $\CCC^+ = \CCC^\ell$ always form a complete list of obstructions to specification as long as $X$ is topologically transitive.  Fix  $M\in \NN$ and let $\tau\in \NN$ be such that for every $v,w\in \LLL_{\leq M}$ there is $u\in \LLL$ with $\abs{u}\leq \tau$ such that $vuw\in \LLL$; note that such a $\tau$ exists because $X$ is transitive and $\LLL_{\leq M}$ is finite.  Then given any $v,w\in \GGG(\CCC^\pm,M)$, there is $u\in \LLL_{\leq \tau}$ such that $v_{[\abs{v}-M+1,\abs{v}]} u w_{[1,M]} \in \LLL$.  Since $v,w\in \GGG(\CCC^\pm,M)$, we have that $v_{[\abs{v}-M,\abs{v}]}\notin \CCC^+ = \CCC^\ell$, and hence $v_{[\abs{v} - M, \abs{v}]} u w_{[1,M]} \in \LLL$.  Proceeding inductively and using the fact that $v_{[\abs{v}-i,\abs{v}]}\notin \CCC^\ell$ for any $i>M$, we conclude that $vuw_{[1,M]}\in \LLL$.  A similar induction using $w_{[1,i]}\notin \CCC^- = \CCC^r$ yields $vuw\in \LLL$.  Since $M$ was arbitrary, this gives \ref{spec*}.

The proof for $\CCC^- = \emptyset$ and $\CCC^+ = \CCC^\ell$ in the topologically exact case is similar.  Given $M\in \NN$, exactness gives $\tau\in \NN$ such that for every $v\in \LLL_{\leq M}$, we have $\sigma^{\tau + M}[v] = X^+$.  In particular, for all $v\in \LLL_{\leq M}$ and $w\in \LLL$, we have $\sigma^{\tau + M}[v] \supset [w]$, so there is $u\in \LLL_\tau$ such that $vuw\in \LLL$.  Then given any $v,w\in \GGG(\CCC^\pm,M)$, there is $u\in \LLL_{ \tau}$ such that $v_{(\abs{v}-M,\abs{v}]} uw\in \LLL$, and the same inductive argument as before shows that $vuw\in \LLL$, so \ref{spec*} holds.

\subsection{Synchronised shifts}\label{sec:sync}

To deduce Theorem \ref{thm:sync} from Theorems \ref{thm:structure} and \ref{thm:spr-a}, let $s$ be a synchronising word and let $\GGG = \LLL \cap s\LLL \cap \LLL s$ be the set of words that start and end with $s$ (though $s$ may overlap itself).   Choose $c\in \LLL$ such that $scs\in \LLL$, and let $\tau = \abs{c}$.  For every $v,w\in \GGG$ we have $vcw\in \GGG$ by the definition of a synchronising word, so $\GGG$ satisfies \ref{s-spec}; since the gluing word $c$ does not depend on the choice of $v,w\in \GGG$, the statement in \ref{s-spec} involving $v'$ and $w'$ holds automatically.  Writing $\Cp = \Cs = \LLL(Y) = \LLL \setminus \LLL s \LLL$ for the collection of words that do not contain $s$ as a subword, every $w\in \LLL$ is either contained in $\Cp$, or has $w=u^pvu^s \in \Cp\GGG\Cs$ by marking the first and last occurrences of $s$ as a subword of $w$.  Thus $P(\Cp \cup \Cs \cup (\LLL \setminus \Cp \GGG \Cs),\ph) = P(Y,\ph)$, and the hypothesis that $P(Y,\ph)<P(\ph)$ implies \ref{gap}.  Finally, taking $L=|s|$ we see that \ref{stay-good} is immediately satisfied by the definition of $\GGG$.

\subsection{Proof of Theorem \ref{thm:factors}}\label{sec:passing-to-factors}

Theorem \ref{thm:factors} is a consequence of Theorem \ref{thm:CGC} and the following two propositions.

\begin{proposition}\label{prop:factors}
If $\tilde X$ is a shift factor of $X$, then $\hspec(\tilde X) \leq \hspec(X)$.
\end{proposition}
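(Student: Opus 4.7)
The plan is to push forward a near-optimal collection $\CCC^\pm \subset \LLL(X)$ through the factor map $\pi\colon X\to\tilde X$ to obtain $\tilde\CCC^\pm \subset \LLL(\tilde X)$ of no greater entropy still satisfying \eqref{eqn:persistent2} and \ref{spec*}; taking the infimum over admissible $\CCC^\pm$ then yields $\hspec(\tilde X) \le \hspec(X)$.

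After replacing $X$ by a higher-block presentation (which is conjugate to $X$ and preserves both the entropy and the defining conditions up to negligible length-shift adjustments in the parameters), we may assume $\pi$ is induced by a symbol map $\Pi\colon A\to\tilde A$ extended symbolwise to words. Given $\CCC^\pm\subset\LLL(X)$ satisfying \eqref{eqn:persistent2} and \ref{spec*}, I would set $\tilde\CCC^\pm := \Pi(\CCC^\pm)$. Since $\Pi$ preserves length, $|\tilde\CCC^\pm_n| \le |\CCC^\pm_n|$, so $h(\tilde\CCC^\pm) \le h(\CCC^\pm)$.

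Condition \eqref{eqn:persistent2} transports directly: if $\tilde v\tilde w\in\tilde\CCC^+$, write $\tilde v\tilde w = \Pi(z)$ for some $z\in\CCC^+$, decompose $z = vw$ with $|v| = |\tilde v|$, and apply \eqref{eqn:persistent2} for $\CCC^+$ to conclude $v\in\CCC^+$, hence $\tilde v = \Pi(v)\in\tilde\CCC^+$; the $\CCC^-$ case is symmetric. For \ref{spec*}, fix $M\in\NN$ and take $\tilde v,\tilde w\in\GGG(\tilde\CCC^\pm,M)$. Lift them arbitrarily to $v,w\in\LLL(X)$ with $\Pi(v) = \tilde v$ and $\Pi(w) = \tilde w$; such lifts exist because $\tilde X = \pi(X)$. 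The key observation is that any such lifts lie in $\GGG(\CCC^\pm,M)$: if $v_{[1,i]}\in\CCC^-$ for some $i\ge M$, then $\tilde v_{[1,i]} = \Pi(v_{[1,i]})\in\tilde\CCC^-$, contradicting $\tilde v\in\GGG(\tilde\CCC^\pm,M)$, and the suffix case is analogous. Applying \ref{spec*} for $\CCC^\pm$ with the same $M$ produces $u\in\LLL(X)$ with $|u|\le\tau(M)$ and $vuw\in\LLL(X)$; then $\Pi(vuw) = \tilde v\,\Pi(u)\,\tilde w\in\LLL(\tilde X)$ witnesses \ref{spec*} for $\tilde\CCC^\pm$ with the same gluing bound $\tau(M)$.

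The only delicate point is the lifting step in verifying \ref{spec*}, which succeeds precisely because $\tilde\CCC^\pm$ is defined as the pushforward $\Pi(\CCC^\pm)$: this definition guarantees that any bad prefix or suffix downstairs forces a bad prefix or suffix upstairs, so lifts of good words are automatically good. I foresee no substantive obstacle beyond the cosmetic reduction to a one-block factor map, which is standard.
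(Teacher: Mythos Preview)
Your argument is correct and essentially matches the paper's: push $\CCC^\pm$ forward through the factor map and verify \eqref{eqn:persistent2}, \ref{spec*}, and the entropy bound. The paper works directly with the sliding block code $\Theta\colon\LLL_{n+2m}(X)\to\tilde\LLL_n$, so the length shifts appear explicitly (lifts of words in $\GGG(\tilde\CCC^\pm,M)$ land in $\GGG(\CCC^\pm,M+2m)$, and $\tilde\tau(M)=\tau(M+2m)+2m$); your reduction to a one-block code merely relocates this bookkeeping into the claim $\hspec(X^{[N]})\le\hspec(X)$, which requires pushing $\CCC^\pm$ through the \emph{non}-one-block recoding $X\to X^{[N]}$ and hence needs exactly the same length-shift argument you were hoping to sidestep.
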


\begin{proposition}\label{prop:positive-entropy}
If there is $\GGG\subset \LLL$ satisfying \ref{spec} such that every $w\in \LLL$ has $\LLL w \LLL \cap \GGG \neq\emptyset$, then every subshift factor $\tilde X$ of $X$ has $h(\tilde X)>0$ or is a single periodic orbit.  In particular, if $\gcd \{ k \mid  \Per_k(X) \neq \emptyset 
\} = 1$, then every non-trivial subshift factor of $X$ has positive entropy.
\end{proposition}

\begin{proof}[Proof of Proposition \ref{prop:factors}]
Let $(\tilde X,\tilde\sigma)$ be a shift factor of $(X,\sigma)$.  We prove that $\hspec(\tilde X) \leq \hspec(X)$ by showing that if $\CCC^{\pm} \subset \LLL$ satisfy \eqref{eqn:persistent2} and \ref{spec*}, then there are $\tilde\CCC^{\pm}\subset\tilde\LLL$ satisfying the same conditions and with the property that $h(\tC^- \cup \tC^+) \leq h(\CCC^- \cup \CCC^+)$.

We follow the proof of \cite[Proposition 2.2]{CT1}: given two shifts $X,\tilde X$ on finite alphabets $A,\tilde A$ with a factor map $\pi\colon X\to \tilde X$, there is some $m\in \NN$ and $\theta\colon \LLL_{2m+1}(X) \to \tilde A$ such that $\pi(x)_n = \theta(x_{[n-m,n+m]})$ for every $x\in X$ and $n\in \ZZ$.  Writing $\Theta\colon \LLL_{n+2m} \to \tilde \LLL_n$ for the map induced by $\theta$, we consider $\CCC^\pm\subset \LLL$ satisfying \eqref{eqn:persistent2} and \ref{spec*}, and put
$
\tC^- = \Theta(\CCC^-)$, 
$\tC^+ = \Theta(\CCC^+).
$
Since $\#\tC^-_n \leq \#\CCC^-_{n+2m}$, and similarly for $\tC^+$, we get $h(\tC^-\cup\tC^+) \leq h(\CCC^- \cup \CCC^+)$.

To prove \eqref{eqn:persistent2} for $\tC^+$, observe that given $\tilde{w}\in \tC^+$ and $1\leq i\leq \abs{w}$, there is $w\in \CCC^+$ such that $\tilde{w} = \Theta(w)$, and in particular $\tilde{w}_{[1,i]} = \Theta(w_{[1,i+2m]}) \in \Theta(\CCC^+) = \tC^+$ since $\CCC^+$ satisfies \eqref{eqn:persistent2}.  The proof for $\tC^-$ is similar.

Finally, every $\GGG(\tC^\pm,M)$ has specification in the sense of \ref{spec*}: given $\tilde{v}\in \GGG(\tC^\pm,M)$, let $v \in \LLL$ be such that $\tilde{v} = \Theta(v)$.  Then $\tilde{v}_{[1,i]}\notin \tC^-$ for all $i> M$ implies that $v_{[1,j]} \notin \CCC^-$ for all $j > M+2m$, and we similarly deduce that $v_{[i,\abs{v}]}\notin\CCC^+$ for $i\leq \abs{v}-M-2m$, hence $v\in \GGG(\CCC^\pm,M+2m)$.

Now given $\tilde{v},\tilde{w}\in \GGG(\tC^\pm,M)$, there are $v,w\in \GGG(\CCC^\pm,M+2m)$ with $\Theta(v)=\tilde{v}$ and $\Theta(w)=\tilde{w}$.  By \ref{spec*} for $\CCC^\pm$ there is $u\in \LLL$ such that $\abs{u}\leq \tau(M+2m)$ and $vuw\in \LLL$.  Thus $\Theta(vuw) = \tilde v \cdot \Theta(v_{(\abs{v}-2m,\abs{v}]} u w_{[1,2m]}) \cdot \tilde w \in \tilde\LLL$, so $\GGG(\tC^\pm,M)$ satisfies \ref{spec*} with $\tilde\tau(M) = \tau(M+2m) + 2m$.
\iftoggle{arxiv}{}{\qed}\end{proof}

\begin{proof}[Proof of Proposition \ref{prop:positive-entropy}]
If $\GGG$ is periodic in the sense of 
Proposition \ref{prop:periodic}, then $\LLL$ is periodic as well,
since $\LLL w\LLL \cap \GGG \neq \emptyset$ for every $w\in \LLL$,  so $X$ is a single periodic orbit.  Thus if $X$ is not a single periodic orbit, Lemma \ref{lem:good-entropy} applies to give $h(\GGG)>0$, hence $h(X)>0$.

If $\tilde X$ is a shift factor of $X$ with factor map $\Theta \colon \LLL_{n+2m} \to \tilde \LLL_n$, then taking $\tilde\GGG = \Theta(\GGG)$ we see that $\tilde\GGG$ has \ref{spec}; indeed, given any $\tilde{v},\tilde{w}\in \tilde\GGG$ we take $v,w\in \GGG$ such that $\Theta(v)=\tilde{v}$ and $\Theta(w)=\tilde{w}$, then there is $u\in \LLL_{\leq \tau}$ such that 
if $v',w'\in \GGG$ have $v\in \LLL v'$ and $w\in w'\LLL$, then $v'uw'\in \GGG$.  In particular, $vuw\in \GGG$, and $\Theta(vuw) = \tilde v\tilde u \tilde w\in \tilde \GGG$, where $\abs{\tilde u} \leq \tau + 2m$.  Now if $\tilde{v}', \tilde{w}'\in \tilde\GGG$ have
$\tilde v \in \tilde\LLL \tilde{v}'$ and $\tilde w \in \tilde{w}'\tilde \LLL$, then there are $v',w'$ as above with $\Theta(v')=\tilde{v}'$ and $\Theta(w')=\tilde{w}'$, and thus $\tilde{v}' \tilde{u} \tilde{w}' = \Theta(v'uw') \in \tilde\GGG$, so $\tilde\GGG$ has \ref{spec}.

Moreover, given any $\tilde w\in \tilde \LLL$ we have $\tilde w = \Theta(w)$ for some $w\in \LLL$, and thus there are $u,v\in \LLL$ such that $uwv\in \GGG$, hence $\tilde u \tilde w \tilde v := \Theta(uwv) \in \tilde\GGG$.
We have shown that $\tilde\GGG$ satisfies \ref{spec} and has the property that $\tilde\LLL \tilde w \tilde \LLL \cap \tilde\GGG \neq \emptyset$ for all $\tilde w\in \tilde \LLL$.  It remains only to show that if $\gcd\{k \mid \Per_k(X) \neq \emptyset\} = 1$, then $\tilde X$
is not a single non-trivial periodic orbit.  For this it suffices to observe that if $x\in \Per_k(X)$, then $\sigma^k(x) = x$ and hence $\tilde\sigma^k(\Theta(x)) = \Theta(\sigma^k(x)) = \Theta(x)$, hence $\Theta(x)$ is periodic with period a factor of $k$.  If $\tilde X$ is a single periodic orbit with least period $p$, then $p$ divides $k$ whenever $\Per_k(X)\neq \emptyset$, and hence $p=1$.
\iftoggle{arxiv}{}{\qed}\end{proof}

\subsection{Proof of claim in \S\ref{sec:shift-spaces}: $\Delta_a[\Phi{]}>0$ iff \eqref{eqn:SPR}}\label{sec:spr-pf}

The definition of strong positive recurrence in \cite{oS01}  involves positivity of a certain \defn{discriminant} $\Delta_a[\Phi]$.  More precisely, one defines the \defn{induced pressure function} $\gamma(p) := P_G(\overline{\Phi + p})$ for $p\in \RR$; this function can take finite or infinite values, and we will not need to use its definition, only its properties as proved in \cite{oS01}.  Writing $p_a^*[\Phi] = \sup \{p\mid \gamma(p)<\infty\}$, the discriminant is $\Delta_a[\Phi] = \sup\{\gamma(p) \mid p<p_a^*[\Phi]\}$.  By \cite[Proposition 3]{oS01}, the function $\gamma(p)$ is continuous and strictly increasing on $(-\infty,p_a^*[\Phi]]$.
We show that $\Delta_a[\Phi]>0$ iff \eqref{eqn:SPR} holds.  By \cite[(2),(4),(6)]{oS01}, we have
$\Delta_a[\Phi] = \gamma(p_a^*[\Phi])$ and
\begin{equation}
\label{eqn:p*}
p_a^*[\Phi] = -\ulim \frac 1n \log Z_n^*(\Phi,a), \qquad
P_G(\Phi) = \begin{cases} -p(\Phi) & \Delta_a[\Phi] \geq 0, \\
-p_a^*[\Phi] & \Delta_a[\Phi] < 0, \end{cases}
\end{equation}
where $p(\Phi)$ is the unique solution of $\gamma(p)=0$, which exists iff $\Delta_a[\Phi]\geq 0$.

Now if $\Delta_a[\Phi] < 0$, then \eqref{eqn:p*} 
shows that $P_G(\Phi) = \ulim \frac 1n Z_n^*(\Phi,a)$, so that \eqref{eqn:SPR} fails.  It remains to consider the case when $\Delta_a[\Phi] \geq 0$, so the two sides of \eqref{eqn:SPR} are given by $-p_a^*[\Phi]$ and $-p(\Phi)$, where $\gamma(p(\Phi))=0$ and $\gamma(p_a^*[\Phi]) = \Delta_a[\Phi]$.  Since $\gamma(p)$ is strictly increasing in $p$, it follows that $\Delta_a[\Phi] > 0$ iff $p(\Phi) < p_a^*[\Phi]$, which is equivalent to \eqref{eqn:SPR}.

\subsection{Proof of Lemma \ref{lem:UBP}}\label{sec:UBP}

Given $\psi_1\in \BBB_1$, define a family of continuous linear maps $\{K_i^{\psi_1} \colon \BBB_2 \to \RR\}_{i\in I}$ by $K_i^{\psi_1}(\psi_2) = F_n(\psi_1,\psi_2) \Delta(i)^{-1}$.  Then for every $i\in I$ we have $|K_i^{\psi_1}(\psi_2)| \leq K(\psi_1,\psi_2)$, so the family $\{K_i^{\psi_1}\}_{i\in I}$ is pointwise bounded.
By the uniform boundedness principle, $C_0(\psi_1) := \sup_{i\in I} \|K_i^{\psi_1}\| < \infty$.  Now we define a family of continuous linear maps $\{K'_{i,\psi_2} \colon \BBB_1\to \RR\}_{i\in I, \psi_2\in \BBB_2}$ by $K'_{i,\psi_2}(\psi_1) = F_n(\psi_1,\psi_2) \Delta(i)^{-1} \|\psi_2\|_2^{-1}$.
For each $i$ and $\psi_2$ we have $|K'_{i,\psi_2}(\psi_1)| = |K_i^{\psi_1}(\psi_2)| /\|\psi_2\|_2 \leq \|K_i^{\psi_1}\| \leq C_0(\psi_1)$, so the family $\{K'_{i,\psi_2}\}_{i\in I, \psi_2\in \BBB_2}$ is pointwise bounded.  Applying the uniform boundedness principle again gives $C := \sup_{i\in I, \psi_2 \in \BBB_2} \|K'_{i,\psi_2}\| < \infty$, and thus for all $\psi_1\in \BBB_1$, $\psi_2\in \BBB_2$, and $i\in I$, we have
\[
\abs{F_i(\psi_1,\psi_2)} = |K'_{i,\psi_2}(\psi_1)| \|\psi_2\|_2 \Delta(i)
\leq \|K'_{i,\psi_2}\| \|\psi_1\|_1 \|\psi_2\|_2 \Delta(i)
\leq C \|\psi_1\|_1 \|\psi_2\|_2 \Delta(i).
\]

\subsection{Proof of claims in Remark \ref{rmk:hE}}\label{sec:hE}

Suppose $\FFF \subset \LLL(X)$ satisfies \ref{free}.  Let $B = \{w_1 \mid w\in \FFF\}$ and $C=\{w_{\abs{w}} \mid w\in \FFF\}$.  Then by \ref{free} we have $c\to b$ for every $c\in C$ and $b\in B$.  Since each $a\in A$ has exactly two followers (two choices of $b$ such that $a\to b$) and no two choices of $a$ have the same set of two followers, 
one of $B,C$ must be a singleton, call it $\{a\}$.   Then every word in $\FFF$ either starts or ends with $a$. 
Let
$\DDD := \{w\in \LLL \mid w_j \neq a \text{ for all }1\leq j\leq \abs{w}\}$.
Then for any choice of $\Ep,\Es$ and any $w\in \DDD$, we have either $w\in \LLL \setminus \Ep \FFF \Es$, or $w = u^p v u^s$ for some $u^p \in \Es$, $v\in \FFF$, and $u^s\in \Es$; $v$ must be the empty word since $w$ does not contain the symbol $a$, so $\DDD \subset (\LLL \setminus \Ep \FFF \Es) \cup \Ep \Es$, and by Lemma \ref{lem:CD} we get
\[
h(\DDD) \leq \max \{h(\LLL \setminus \Ep \FFF \Es), h(\Ep), h(\Es)\} \leq h(\EEE),
\]
where $\EEE = I \cup \Ep \cup \Es \cup (\LLL \setminus \Ep \FFF \Es)$.
Each state has two followers, so there are always two choices for the next symbol, giving $\#\LLL_n = k 2^{n-1}$ and $h(X)=\log 2$.  We estimate $h(\DDD)$: given $u\in \{1,2\}^n$, define $\pi(u)\in \DDD_{n+1}$ by $\pi(u)_1 = a+1$ (working mod $k$ throughout) and
$\pi(u)_{i+1} = \pi(u)_i + u_i$ unless the right-hand side is $a$, in which case we set $\pi(u)_{i+1}$ to be whichver of $a\pm1$ is legal.
Given $u,v\in \{1,2\}^n$, we have $\pi(u)=\pi(v)$ if and only if $u_i = v_i$ for all $i$ such that $\pi(u)_i \notin \{a-2,a-1\}$, and since $\pi(u)_i \in \{a-2,a-1\}$ occurs at most twice in each $k/2$ consecutive values of $i$,  every $w\in \DDD_{n+1}$ has $\#\pi^{-1}(w) \leq 2^{2\frac n{k/2}}$.  Thus $\# \DDD_{n+1} \geq 2^n 2^{-4n/k}$, giving $h(\EEE) \geq h(\DDD) \geq (1-\frac 4k) \log 2$.  

\appendix

\section{List of conditions}\label{sec:conditions}

For ease of reference, we list here the various versions of \ref{spec}, \ref{gap}, \ref{stay-good} that appear throughout the paper.  First we list variants of \ref{spec}, which all give a specification property.
\begin{enumerate}[leftmargin=*, widest=I,
label = \textup{\textbf{[\Roman{*}{]}}}] 
\item 
There is $\tau\in \NN$  such that for all $v,w\in \GGG$, there is $u\in \LLL$ with $\abs{u} \leq \tau$ such that $v'uw'\in\GGG$ whenever $v'\in\GGG$ is a suffix of $v$ and $w'\in\GGG$ is a prefix of $w$,
\end{enumerate}
\begin{enumerate}[leftmargin=*, widest=I,
label = \textup{\textbf{[\Roman{*}$'${]}}}] 
\item
There is $\tau\in \NN$  such that for all $v,w\in \GGG$, there is $u\in \LLL$ with $\abs{u}= \tau$ such that $v'uw'\in\GGG$ whenever $v'\in\GGG$ is a suffix of $v$ and $w'\in\GGG$ is a prefix of $w$.
\end{enumerate}
\begin{enumerate}[leftmargin=*, widest=I,
label = \textup{\textbf{[\Roman{*}$_0${]}}}] 
\item
Given any $v,w\in \FFF$ we have $vw\in \FFF$.
\end{enumerate}
\begin{enumerate}[leftmargin=*, widest=I,
label = \textup{\textbf{[\Roman{*}$^*${]}}}] 
\item
For every $M\in \NN$ there is $\tau=\tau(M)$ such that for all $v,w\in \GGG(\CCC^\pm,M)$ there is $u\in \LLL$ with $\abs{u}\leq \tau$ such that $vuw\in \LLL$.
\end{enumerate}

Observe that \ref{s-spec} is stronger than \ref{spec}, and \ref{free} is stronger than both of them.  Theorem \ref{thm:get0spec} is devoted to going from \ref{spec} to \ref{free}, without control on $\gcd\{\abs{w}\mid w\in \FFF\}$; \ref{s-spec} gives control of this gcd.  Condition \ref{spec*} is used in Theorem \ref{thm:CGC}; $\GGG(\CCC^\pm,M)$ is defined in \eqref{eqn:GMC}.  

The variants of \ref{gap} control the pressure of prefix and suffix collections.
\begin{enumerate}[leftmargin=*, widest=II,
label = \textup{\textbf{[\Roman{*}{]}}}] 
\setcounter{enumi}{1}
\item
There are $\Cp,\Cs \subset \LLL$ such that $P(\Cp \cup \Cs \cup (\LLL \setminus \Cp \GGG \Cs),\ph) < P(\ph)$.
\end{enumerate}
\begin{enumerate}[leftmargin=*, widest=II,
label = \textup{\textbf{[\Roman{*}$'${]}}}] 
\setcounter{enumi}{1}
\item
 $P(I,\ph) < P(\ph)$, and there are $\Ep,\Es \subset \LLL$ with $P(\Ep \cup \Es \cup (\LLL \setminus \Ep\FFF\Es),\ph) < P(\ph)$.
\end{enumerate}
Observe that \ref{gap-2} is stronger than \ref{gap}, since it imposes a condition on the set of generators $I=\FFF\setminus \FFF\FFF$ in addition to the collection $\FFF = I^*$.  Note that \ref{gap-2} only makes sense for collections satisfying \ref{free} so that we can talk about a `set of generators'.

The variants of \ref{stay-good} impose conditions on how $\GGG$ behaves under intersections and unions.  We start with two equivalent formulations of \ref{stay-good}, then list other related conditions.
\begin{enumerate}[leftmargin=*, widest=III,
label = \textup{\textbf{[\Roman{*}{]}}}] 
\setcounter{enumi}{2}
\item
There is $L\in \NN$ such that if $u,v,w\in \LLL$ have $\abs{v}\geq L$, $uvw\in \LLL$, $uv,vw\in \GGG$, then $v, uvw\in \GGG$.
\end{enumerate}
\begin{enumerate}[leftmargin=*, widest=III,
label = \textup{\textbf{[\Roman{*}{]}}}]
\setcounter{enumi}{2}
\item There is $L\in \NN$ such that if $x\in X$ and $i\leq j \leq k\leq \ell\in \ZZ$ are such that $k-j \geq L$ and $x_{[i,k)},x_{[j,\ell)}\in \GGG$, then $x_{[j,k)}, x_{[i,\ell)} \in \GGG$.
\end{enumerate}
\begin{enumerate}[leftmargin=*, widest=III,
label = \textup{\textbf{[\Roman{*}$^*${]}}}] 
\setcounter{enumi}{2}
\item
If $x\in X$ and $i\leq j\leq k\leq \ell$ are such that $x_{[i,k)},x_{[j,\ell)}\in \FFF$, and there are $a<j$ and $b>k$ such that $x_{[a,j)},x_{[k,b)}\in \FFF$, then $x_{[j,k)}\in \FFF$.
\end{enumerate}
\begin{enumerate}[leftmargin=*, widest=a,
label = \textup{\textbf{[III$_\mathrm{\alph{*}}${]}}}] 
\item
There is $L$ 
such that if $uv,vw\in \GGG$, $\abs{v}\geq L$, and $uvw\in \LLL$, then $v\in \GGG$.
\end{enumerate}
\begin{enumerate}[leftmargin=*, widest=b,
label = \textup{\textbf{[III$_\mathrm{\alph{*}}${]}}}] 
\setcounter{enumi}{1}
\item
There is $L$ such that if $uv,vw\in \GGG$, $\abs{v}\geq L$, and $xuvw\in \GGG$ for some $x\in \LLL$, then $uvw\in \GGG$.
\end{enumerate}
Condition \ref{overlaps} neither implies nor follows from \ref{stay-good}.  Conditions \ref{inter} and \ref{union} both follow from \ref{stay-good}, but do not imply it, because of the freedom in choosing $x$ in \ref{union}.  As seen in Theorem \ref{thm:get0spec}, these conditions are actually sufficient to prove the main result, which is important for the applications in \cite{CP}.

\iftoggle{arxiv}{}{
\begin{acknowledgement}
I am grateful to the anonymous referees for many comments that improved the exposition and for spotting errors in earlier versions of the result on factors and of Lemma \ref{lem:reset}; the latter, which was also pointed out to me by Qu Congcong, necessitated a change in the formulation of condition \ref{spec} from previous versions.
I am also grateful to Omri Sarig for clarifying aspects of strong positive recurrence as they appear in \S\ref{sec:shift-spaces}, and to Dominik Kwietniak for introducing me to cocyclic subshifts and \cite{jK00,jK04}.
This work was partially supported by NSF grants DMS-1362838 and DMS-1554794.
\end{acknowledgement}
}

\ifarxiv\bibliographystyle{amsalpha}	
\else\bibliographystyle{alpha}
\fi
\bibliography{spec-towers}

\providecommand{\bysame}{\leavevmode\hbox to3em{\hrulefill}\thinspace}
\providecommand{\MR}{\relax\ifhmode\unskip\space\fi MR }
\providecommand{\MRhref}[2]{%
  \href{http://www.ams.org/mathscinet-getitem?mr=#1}{#2}
}
\providecommand{\href}[2]{#2}
\begin{thebibliography}{BCLS15}

\bibitem[AHS16]{AHS}
Artur Avila, Pascal Hubert, and Alexandra Skripchenko, \emph{Diffusion for
  chaotic plane sections of 3-periodic surfaces}, Invent. Math. \textbf{206}
  (2016), no.~1, 109--146. \MR{3556526}

\bibitem[BCFT]{BCFT}
Keith Burns, Vaughn Climenhaga, Todd Fisher, and Daniel~J. Thompson,
  \emph{Unique equilibrium states for geodesic flows in non-positive
  curvature}, Geom.\ Funct.\ Anal., 44 pages, to appear.

\bibitem[BCLS15]{BCLS}
Martin Bridgeman, Richard Canary, Fran{\c{c}}ois Labourie, and Andres
  Sambarino, \emph{The pressure metric for {A}nosov representations}, Geom.
  Funct. Anal. \textbf{25} (2015), no.~4, 1089--1179. \MR{3385630}

\bibitem[Ber88]{aB88}
Anne Bertrand, \emph{Specification, synchronisation, average length}, Coding
  theory and applications ({C}achan, 1986), Lecture Notes in Comput. Sci., vol.
  311, Springer, Berlin, 1988, pp.~86--95. \MR{960710 (89i:94022)}

\bibitem[BG11]{BG11}
A.~I. Bufetov and B.~M. Gurevich, \emph{Existence and uniqueness of a measure
  with maximal entropy for the {T}eichm\"uller flow on the moduli space of
  abelian differentials}, Mat. Sb. \textbf{202} (2011), no.~7, 3--42.
  \MR{2857792 (2012h:37068)}

\bibitem[BH86]{BH86}
F.~Blanchard and G.~Hansel, \emph{Syst\`emes cod\'es}, Theoret. Comput. Sci.
  \textbf{44} (1986), no.~1, 17--49. \MR{858689 (88m:68029)}

\bibitem[BL13]{BL13}
Henk Bruin and Renaud Leplaideur, \emph{Renormalization, thermodynamic
  formalism and quasi-crystals in subshifts}, Comm. Math. Phys. \textbf{321}
  (2013), no.~1, 209--247. \MR{3089670}

\bibitem[Bow72]{rB72}
Rufus Bowen, \emph{The equidistribution of closed geodesics}, Amer. J. Math.
  \textbf{94} (1972), 413--423. \MR{0315742 (47 \#4291)}

\bibitem[Bow74]{rB74}
\bysame, \emph{Some systems with unique equilibrium states}, Math. Systems
  Theory \textbf{8} (1974), no.~3, 193--202. \MR{0399413 (53 \#3257)}

\bibitem[Bow75]{rB75}
\bysame, \emph{Equilibrium states and the ergodic theory of {A}nosov
  diffeomorphisms}, Lecture Notes in Mathematics, Vol. 470, Springer-Verlag,
  Berlin-New York, 1975. \MR{0442989 (56 \#1364)}

\bibitem[Bow78]{rB78}
\bysame, \emph{On {A}xiom {A} diffeomorphisms}, American Mathematical Society,
  Providence, R.I., 1978, Regional Conference Series in Mathematics, No. 35.
  \MR{0482842}

\bibitem[Boy08]{mB08}
Mike Boyle, \emph{Open problems in symbolic dynamics}, Geometric and
  probabilistic structures in dynamics, Contemp. Math., vol. 469, Amer. Math.
  Soc., Providence, RI, 2008, pp.~69--118. \MR{2478466 (2010h:37023)}

\bibitem[BS03]{BS03}
J{\'e}r{\^o}me Buzzi and Omri Sarig, \emph{Uniqueness of equilibrium measures
  for countable {M}arkov shifts and multidimensional piecewise expanding maps},
  Ergodic Theory Dynam. Systems \textbf{23} (2003), no.~5, 1383--1400.
  \MR{2018604 (2004k:37053)}

\bibitem[BSS02]{BSS02}
Luis Barreira, Beno{\^{\i}}t Saussol, and J{\"o}rg Schmeling,
  \emph{Higher-dimensional multifractal analysis}, J. Math. Pures Appl. (9)
  \textbf{81} (2002), no.~1, 67--91. \MR{1994883 (2004g:37038)}

\bibitem[BT09]{BT09}
Henk Bruin and Mike Todd, \emph{Equilibrium states for interval maps: the
  potential {$-t\log\vert Df\vert $}}, Ann. Sci. \'Ec. Norm. Sup\'er. (4)
  \textbf{42} (2009), no.~4, 559--600. \MR{2568876 (2011c:37079)}

\bibitem[Buz97]{jB97}
J{\'e}r{\^o}me Buzzi, \emph{Intrinsic ergodicity of affine maps in
  {$[0,1]^d$}}, Monatsh. Math. \textbf{124} (1997), no.~2, 97--118. \MR{1462857
  (98g:58098)}

\bibitem[Buz04]{jB04}
\bysame, \emph{Entropy of equilibrium measures of continuous piecewise
  monotonic maps}, Stoch. Dyn. \textbf{4} (2004), no.~1, 84--94. \MR{2069369
  (2005h:37071)}

\bibitem[Buz05]{jB05}
\bysame, \emph{Subshifts of quasi-finite type}, Invent. Math. \textbf{159}
  (2005), no.~2, 369--406. \MR{2116278 (2005i:37013)}

\bibitem[CC]{CC}
Vaughn Climenhaga and Van Cyr, \emph{Positive entropy equilibrium states}, 16
  pages, \href{http://arxiv.org/abs/1708.02272}{\color{red}arXiv:1708.02272}.

\bibitem[CFT]{CFT2}
Vaughn Climenhaga, Todd Fisher, and Daniel~J. Thompson, \emph{Equilibrium
  states for {M}a\~n\'e diffeomorphisms}, Ergodic Theory Dynam. Systems, to
  appear, 25 pages,
  \href{http://arxiv.org/abs/1703.05722}{\color{red}arXiv:1703.05722}.

\bibitem[CFT18]{CFT}
Vaughn Climenhaga, Todd Fisher, and Daniel~J. Thompson, \emph{Unique
  equilibrium states for {B}onatti--{V}iana diffeomorphisms}, Nonlinearity
  \textbf{31} (2018), no.~6, 2532--2570.

\bibitem[Con]{sC}
Scott Conrad, \emph{{A coded shift with a H\"older potential that is not
  hyperbolic}}, preprint.

\bibitem[CP]{CP}
Vaughn Climenhaga and Ronnie Pavlov, \emph{One-sided almost specification and
  intrinsic ergodicity}, Ergodic Theory and Dynamical Systems, 25 pages, to
  appear.

\bibitem[CS09]{CS09}
Van Cyr and Omri Sarig, \emph{Spectral gap and transience for {R}uelle
  operators on countable {M}arkov shifts}, Comm. Math. Phys. \textbf{292}
  (2009), no.~3, 637--666. \MR{2551790 (2011f:37041)}

\bibitem[CT12]{CT1}
Vaughn Climenhaga and Daniel~J. Thompson, \emph{Intrinsic ergodicity beyond
  specification: {$\beta$}-shifts, {$S$}-gap shifts, and their factors}, Israel
  J. Math. \textbf{192} (2012), no.~2, 785--817. \MR{3009742}

\bibitem[CT13]{CT2}
\bysame, \emph{Equilibrium states beyond specification and the {B}owen
  property}, J. Lond. Math. Soc. (2) \textbf{87} (2013), no.~2, 401--427.
  \MR{3046278}

\bibitem[CT14]{CT3}
\bysame, \emph{Intrinsic ergodicity via obstruction entropies}, Ergodic Theory
  Dynam. Systems \textbf{34} (2014), no.~6, 1816--1831. \MR{3272773}

\bibitem[CT16]{CT4}
\bysame, \emph{Unique equilibrium states for flows and homeomorphisms with
  non-uniform structure}, Adv. Math. \textbf{303} (2016), 745--799.
  \MR{3552538}

\bibitem[CTY17]{CTY}
Vaughn Climenhaga, Daniel~J. Thompson, and Kenichiro Yamamoto, \emph{Large
  deviations for systems with non-uniform structure}, Trans. Amer. Math. Soc.
  \textbf{369} (2017), no.~6, 4167--4192. \MR{3624405}

\bibitem[Dao13]{yD13}
Yair Daon, \emph{Bernoullicity of equilibrium measures on countable {M}arkov
  shifts}, Discrete Contin. Dyn. Syst. \textbf{33} (2013), no.~9, 4003--4015.
  \MR{3038050}

\bibitem[FF92]{FF92}
Doris Fiebig and Ulf-Rainer Fiebig, \emph{Covers for coded systems}, Symbolic
  dynamics and its applications ({N}ew {H}aven, {CT}, 1991), Contemp. Math.,
  vol. 135, Amer. Math. Soc., Providence, RI, 1992, pp.~139--179. \MR{1185086
  (93m:54068)}

\bibitem[Gou06]{sG06}
S\'ebastien Gou\"ezel, \emph{Regularity of coboundaries for nonuniformly
  expanding {M}arkov maps}, Proc. Amer. Math. Soc. \textbf{134} (2006), no.~2,
  391--401. \MR{2176007}

\bibitem[Hof78]{fH78}
Franz Hofbauer, \emph{{$\beta $}-shifts have unique maximal measure}, Monatsh.
  Math. \textbf{85} (1978), no.~3, 189--198. \MR{0492180 (58 \#11326)}

\bibitem[Hof79]{fH79}
\bysame, \emph{On intrinsic ergodicity of piecewise monotonic transformations
  with positive entropy}, Israel J. Math. \textbf{34} (1979), no.~3, 213--237
  (1980). \MR{570882 (82c:28039a)}

\bibitem[Hof81]{fH81}
\bysame, \emph{On intrinsic ergodicity of piecewise monotonic transformations
  with positive entropy. {II}}, Israel J. Math. \textbf{38} (1981), no.~1-2,
  107--115. \MR{599481 (82c:28039b)}

\bibitem[IRRL12]{IRRL12}
Irene Inoquio-Renteria and Juan Rivera-Letelier, \emph{A characterization of
  hyperbolic potentials of rational maps}, Bull. Braz. Math. Soc. (N.S.)
  \textbf{43} (2012), no.~1, 99--127. \MR{2909925}

\bibitem[IT10]{IT10}
Godofredo Iommi and Mike Todd, \emph{Natural equilibrium states for multimodal
  maps}, Comm. Math. Phys. \textbf{300} (2010), no.~1, 65--94. \MR{2725183
  (2011m:37048)}

\bibitem[Kat82]{aK82}
A.~Katok, \emph{Entropy and closed geodesics}, Ergodic Theory Dynam. Systems
  \textbf{2} (1982), no.~3-4, 339--365 (1983). \MR{721728 (85b:53047)}

\bibitem[Kel89]{gK89}
Gerhard Keller, \emph{Lifting measures to {M}arkov extensions}, Monatsh. Math.
  \textbf{108} (1989), no.~2-3, 183--200. \MR{1026617 (91b:28011)}

\bibitem[Kif90]{yK90}
Yuri Kifer, \emph{Large deviations in dynamical systems and stochastic
  processes}, Trans. Amer. Math. Soc. \textbf{321} (1990), no.~2, 505--524.
  \MR{1025756 (91e:60091)}

\bibitem[Kit98]{bK98}
Bruce~P. Kitchens, \emph{Symbolic dynamics}, Universitext, Springer-Verlag,
  Berlin, 1998, One-sided, two-sided and countable state Markov shifts.
  \MR{1484730}

\bibitem[Kni98]{gK98}
Gerhard Knieper, \emph{The uniqueness of the measure of maximal entropy for
  geodesic flows on rank {$1$} manifolds}, Ann. of Math. (2) \textbf{148}
  (1998), no.~1, 291--314. \MR{1652924 (2000b:37016)}

\bibitem[Kwa00]{jK00}
Jaroslaw Kwapisz, \emph{Cocyclic subshifts}, Math. Z. \textbf{234} (2000),
  no.~2, 255--290. \MR{1765882 (2001j:37025)}

\bibitem[Kwa04]{jK04}
\bysame, \emph{Transfer operator, topological entropy and maximal measure for
  cocyclic subshifts}, Ergodic Theory Dynam. Systems \textbf{24} (2004), no.~4,
  1173--1197. \MR{2085908 (2005d:37018)}

\bibitem[LM95]{LM95}
Douglas Lind and Brian Marcus, \emph{An introduction to symbolic dynamics and
  coding}, Cambridge University Press, Cambridge, 1995. \MR{1369092
  (97a:58050)}

\bibitem[LRL14]{LRL14}
Huaibin Li and Juan Rivera-Letelier, \emph{Equilibrium states of weakly
  hyperbolic one-dimensional maps for {H}\"older potentials}, Comm. Math. Phys.
  \textbf{328} (2014), no.~1, 397--419. \MR{3196990}

\bibitem[McM08]{cM08}
Curtis~T. McMullen, \emph{Thermodynamics, dimension and the {W}eil-{P}etersson
  metric}, Invent. Math. \textbf{173} (2008), no.~2, 365--425. \MR{2415311
  (2010c:37101)}

\bibitem[Orn70]{dO70}
Donald Ornstein, \emph{Factors of {B}ernoulli shifts are {B}ernoulli shifts},
  Advances in Math. \textbf{5} (1970), 349--364 (1970). \MR{0274717}

\bibitem[Pet89]{kP89}
Karl Petersen, \emph{Ergodic theory}, Cambridge Studies in Advanced
  Mathematics, vol.~2, Cambridge University Press, Cambridge, 1989, Corrected
  reprint of the 1983 original. \MR{1073173 (92c:28010)}

\bibitem[PP90]{PP90}
William Parry and Mark Pollicott, \emph{Zeta functions and the periodic orbit
  structure of hyperbolic dynamics}, Ast\'erisque (1990), no.~187-188, 268.
  \MR{1085356 (92f:58141)}

\bibitem[PS08]{PS08}
Yakov Pesin and Samuel Senti, \emph{Equilibrium measures for maps with inducing
  schemes}, J. Mod. Dyn. \textbf{2} (2008), no.~3, 397--430. \MR{2417478
  (2009k:37072)}

\bibitem[PSZ14]{PSZ}
Yakov Pesin, Samuel Senti, and Ke~Zhang, \emph{{Thermodynamics of towers of
  hyperbolic type}}, to appear in Trans. Amer. Math. Soc., arXiv:1403.2989,
  2014.

\bibitem[Rue76]{dR76}
David Ruelle, \emph{A measure associated with axiom-{A} attractors}, Amer. J.
  Math. \textbf{98} (1976), no.~3, 619--654. \MR{0415683 (54 \#3763)}

\bibitem[Rue92]{dR92}
\bysame, \emph{Thermodynamic formalism for maps satisfying positive
  expansiveness and specification}, Nonlinearity \textbf{5} (1992), no.~6,
  1223--1236. \MR{1192516 (94a:58115)}

\bibitem[Sar99]{oS99}
Omri~M. Sarig, \emph{Thermodynamic formalism for countable {M}arkov shifts},
  Ergodic Theory Dynam. Systems \textbf{19} (1999), no.~6, 1565--1593.
  \MR{1738951 (2000m:37009)}

\bibitem[Sar01]{oS01}
\bysame, \emph{Phase transitions for countable {M}arkov shifts}, Comm. Math.
  Phys. \textbf{217} (2001), no.~3, 555--577. \MR{1822107 (2002b:37040)}

\bibitem[Sar13]{oS13}
\bysame, \emph{Symbolic dynamics for surface diffeomorphisms with positive
  entropy}, J. Amer. Math. Soc. \textbf{26} (2013), no.~2, 341--426.
  \MR{3011417}

\bibitem[Sar15]{oS15}
\bysame, \emph{Thermodynamic formalism for countable {M}arkov shifts},
  Hyperbolic dynamics, fluctuations and large deviations, Proc. Sympos. Pure
  Math., vol.~89, Amer. Math. Soc., Providence, RI, 2015, pp.~81--117.
  \MR{3309096}

\bibitem[Sch97]{jS97}
J{\"o}rg Schmeling, \emph{Symbolic dynamics for {$\beta$}-shifts and
  self-normal numbers}, Ergodic Theory Dynam. Systems \textbf{17} (1997),
  no.~3, 675--694. \MR{1452189 (98c:11080)}

\bibitem[Sin72]{yS72}
Ja.~G. Sina\u\i, \emph{Gibbs measures in ergodic theory}, Uspehi Mat. Nauk
  \textbf{27} (1972), no.~4(166), 21--64. \MR{0399421}

\bibitem[Tho06]{kT06}
Klaus Thomsen, \emph{On the ergodic theory of synchronized systems}, Ergodic
  Theory Dynam. Systems \textbf{26} (2006), no.~4, 1235--1256. \MR{2247640
  (2009a:37011)}

\bibitem[Tho11]{dT11}
Damien Thomine, \emph{A spectral gap for transfer operators of piecewise
  expanding maps}, Discrete Contin. Dyn. Syst. \textbf{30} (2011), no.~3,
  917--944. \MR{2784627}

\bibitem[Wal78]{pW78}
Peter Walters, \emph{Equilibrium states for {$\beta $}-transformations and
  related transformations}, Math. Z. \textbf{159} (1978), no.~1, 65--88.
  \MR{0466492 (57 \#6370)}

\bibitem[Wal82]{pW82}
\bysame, \emph{An introduction to ergodic theory}, Graduate Texts in
  Mathematics, vol.~79, Springer-Verlag, New York-Berlin, 1982. \MR{648108
  (84e:28017)}

\bibitem[You98]{lY98}
Lai-Sang Young, \emph{Statistical properties of dynamical systems with some
  hyperbolicity}, Ann. of Math. (2) \textbf{147} (1998), no.~3, 585--650.
  \MR{1637655 (99h:58140)}

\bibitem[You99]{lY99}
\bysame, \emph{Recurrence times and rates of mixing}, Israel J. Math.
  \textbf{110} (1999), 153--188. \MR{1750438 (2001j:37062)}

\bibitem[Zwe05]{rZ04}
Roland Zweim\"uller, \emph{Invariant measures for general(ized) induced
  transformations}, Proc. Amer. Math. Soc. \textbf{133} (2005), no.~8,
  2283--2295. \MR{2138871}

\end{thebibliography}

\end{document}

In \eqref{eqn:gurevich}, replaced limit with limsup since the limit need not exist in the non-mixing case.

Theorems 3.1 and 3.2 have both been split into Theorem + Corollary as suggested by referee \#3.